\newtheorem*{thmA}{Theorem A}
\newtheorem*{thmB}{Theorem B}
\newtheorem{conj}{Conjecture}
\newtheorem{lemma}{Lemma}
\newtheorem{cor}[lemma]{Corollary}
\newtheorem*{mainthm}{Main Theorem}
\newtheorem*{atmainthm}{Main Theorem for $\AT$}
\theoremstyle{definition}
\def\mv{\overrightarrow}
\def\adj{\leftrightarrow}
\def\nonadj{\not\leftrightarrow}
\def\join{\vee}
\def\diff{{\rm diff}}
\def\Ceil#1{\left\lceil #1 \right\rceil}
\def\C{{\mathcal{C}}}
\def\D{{\mathcal{D}}}
\def\F{{\mathcal{F}}}
\def\G{{\mathcal{G}}}
\def\Z{{\mathbb{Z}}}
\def\chil{\chi_{\ell}}
\def\chiol{\chi_{p}}
\newcommand{\AT}{\operatorname{AT}}
\author{Daniel W. Cranston\thanks{Department of Mathematics and Applied
Mathematics, Viriginia Commonwealth University; \texttt{dcranston@vcu.edu}} 
\and Landon Rabern\thanks{LBD Software Solutions; \texttt{landon.rabern@gmail.com}}}
\title{Painting Squares in $\Delta^2-1$ Shades}
\begin{document}
\maketitle
\abstract{
\def\chil{\chi_{\ell}}
Cranston and Kim conjectured that if $G$ is a connected graph with maximum
degree $\Delta$ and $G$ is not a Moore Graph, then $\chil(G^2)\le \Delta^2-1$;
here $\chil$ is the list chromatic number.  We prove their conjecture; in
fact, we show that this upper bound holds even for online list chromatic number.
\\

\noindent
MSC: 05C15, 05C35
}
\section{Introduction}

Graph coloring has a long history of upper bounds on a graph's
chromatic number $\chi$ in terms of its maximum degree $\Delta$.  A greedy
coloring (in any order) gives the trivial upper bound $\chi\le \Delta+1$.
In 1941, Brooks \cite{Brooks41} proved the following strengthening: If $G$ is
a graph with maximum degree $\Delta\ge 3$ and clique number $\omega \le
\Delta$, then $\chi\le \Delta$.  In 1977, Borodin and Kostochka
\cite{BorodinK77} conjectured the following further strengthening.

\begin{conj}[Borodin-Kostochka Conjecture~\cite{BorodinK77}]
If $G$ is a graph with $\Delta\ge 9$ and $\omega\le \Delta-1$, then $\chi\le
\Delta-1$.
\label{BKconj}
\end{conj}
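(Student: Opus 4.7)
The plan is to attempt a minimum counterexample analysis. Let $G$ be a vertex-critical graph with maximum degree $\Delta\ge 9$, clique number $\omega\le \Delta-1$, and $\chi(G)\ge \Delta$, chosen with $|V(G)|$ minimum. Then every proper subgraph of $G$ is $(\Delta-1)$-colorable, so standard criticality gives $\delta(G)\ge \Delta-1$. Let $H$ be the subgraph induced by the vertices of degree exactly $\Delta$ (the ``high'' vertices) and let $L$ denote the low vertices. The first step is to extract as much structure from $H$ as possible: for instance, by Brooks' theorem applied to neighborhoods, no vertex in $G$ can have an independent set of size $\Delta - \omega(N(v)) + \text{something}$, and by considering colorings of $G - v$ and counting how colors appear in $N(v)$, one tries to force $N(v)$ to be very close to a disjoint union of cliques of total size $\Delta$.

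Next I would attempt a Kempe-chain / Mozhan-style argument on the low vertices. Given a $(\Delta-1)$-coloring $\varphi$ of $G - v$ for a low vertex $v$, the $\Delta-1$ color classes meeting $N(v)$ must each appear, which, combined with $|N(v)|=\Delta-1$, means $\varphi$ realizes each color exactly once on $N(v)$. Swapping colors along a component of the subgraph induced by two color classes should free up a color at $v$ unless the Kempe chain from each neighbor to each other is short and forced. Analyzing which configurations survive all such swaps should force a $K_\Delta$ in $G$, contradicting $\omega\le \Delta-1$.

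For the high vertices, I would try to exploit the Rosenfeld/Lovász-local-lemma philosophy: show that any vertex $v$ with $d(v)=\Delta$ has a neighborhood $N(v)$ that is either sparse enough to color greedily or dense enough to contain a large clique. Counting edges between $H$ and $L$, and inside $H$, should either give a matching that can be uncolored to save a color or a $K_\Delta$-subgraph.

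The main obstacle is, of course, that this conjecture has been open since 1977; Reed proved it for $\Delta$ sufficiently large using the probabilistic method, but the small-$\Delta$ range (starting at $\Delta=9$) remains stubbornly resistant precisely because the deletion/concentration tools have no room to operate and the combinatorial structure of potential counterexamples is only loosely constrained by greedy and Kempe-chain arguments. I would expect the bottleneck to be the configuration where the high vertices form an almost-regular graph whose neighborhoods decompose into near-cliques of sizes just below $\Delta-1$, which seems to evade both the direct coloring and structural contradictions above.
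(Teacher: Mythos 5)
There is nothing to compare here, because the statement you were asked about is Conjecture~\ref{BKconj}, the Borodin--Kostochka Conjecture itself, and the paper does not prove it --- nor does anyone else: it has been open since 1977, with the only major progress being Reed's proof for $\Delta$ astronomically large. The paper only states the conjecture (and its list and online-list strengthenings) as motivation, and then proves something genuinely different and much more restricted: that squares of graphs satisfy $\chi_p(G^2)\le\Delta^2-1$ unless $G$ is a Moore graph, via forbidden $d_1$-paintable subgraphs and the Alon--Tarsi/Schauz machinery. So your proposal cannot be measured against a proof in the paper, and, as you yourself concede in your last paragraph, it is not a proof at all.

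Concretely, every load-bearing step in your outline is conditional: ``one tries to force $N(v)$ to be very close to a disjoint union of cliques,'' the Kempe-swap analysis ``should force a $K_\Delta$,'' the edge count between high and low vertices ``should either give a matching \dots or a $K_\Delta$-subgraph.'' These are precisely the places where the known partial results stop. Criticality plus Kempe/Mozhan-style recoloring does yield strong local structure in a minimum counterexample (this is how results assuming $\omega$ much closer to $\Delta$, or $\Delta$ very large, are proved), but it does not by itself produce the $K_\Delta$ contradiction when $\omega$ can be as small as $\Delta-1$ and $\Delta$ as small as $9$; the dense-neighborhood configurations you flag at the end are exactly the surviving cases, and no argument you give (or that exists in the literature) eliminates them. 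So the gap is not a repairable technical slip but the absence of the central idea: a proof of Conjecture~\ref{BKconj} would be a major new theorem, not a routine verification, and your text correctly identifies itself as a plan that stalls at the known bottleneck.
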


\begin{figure}[h!tb]
\begin{center}
\includegraphics[scale=.5]{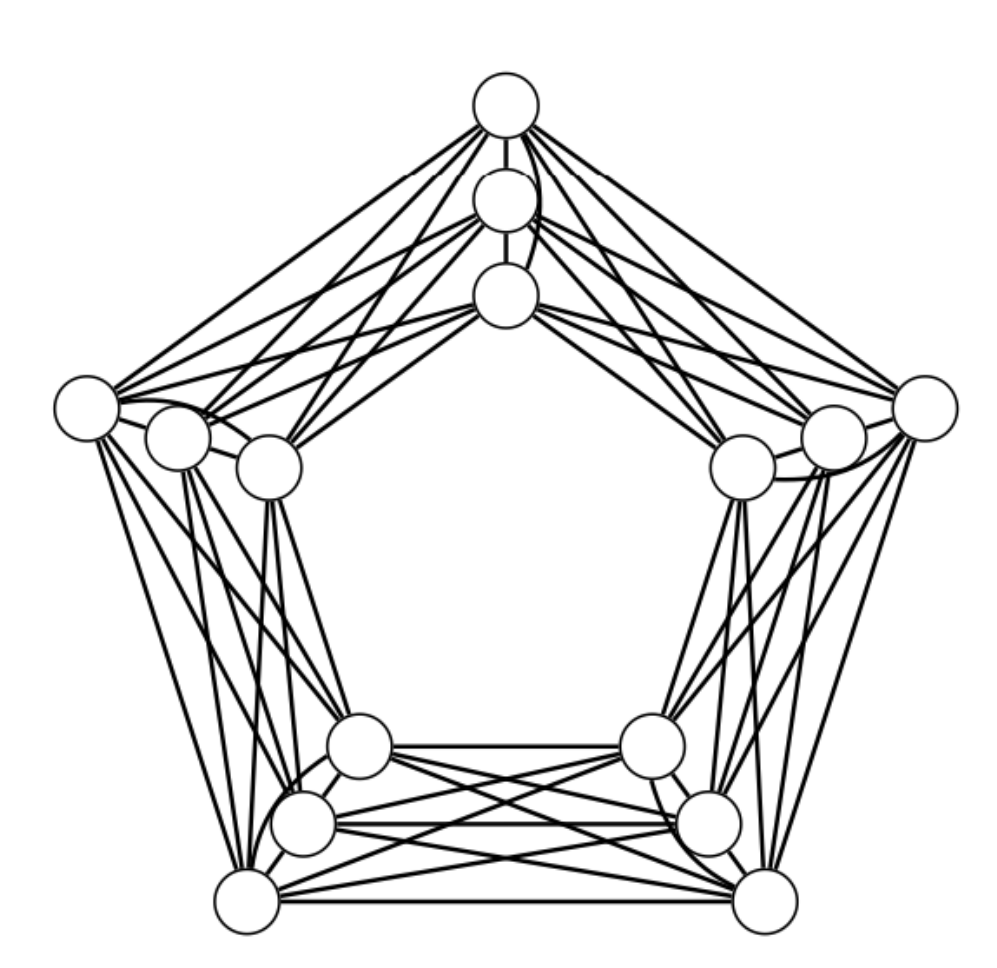}
\end{center}
\caption{The hypothesis $\Delta\ge 9$ in the
Borodin--Kostochka Conjecture is best possible.}
\label{BK-pic}
\end{figure}

If true, this conjecture is best possible in two senses.  First, the condition
$\Delta\ge 9$ cannot be dropped (or even weakened), as shown by the following
graph (See Figure \ref{BK-pic}).  Let $D_i$ induce a triangle for each $i\in\{1,\ldots,5\}$; if
$|i-j|\equiv
1\pmod 5$, then add all edges between vertices of $D_i$ and $D_j$.  This yields an 8-regular graph on 15 vertices with clique
number 6 and chromatic
number 8; it would be a counterexample to the conjecture if we weakened the
hypothesis $\Delta\ge 9$.  Similarly, even if we require $\omega\le \Delta-2$,
we cannot conclude that $\chi\le \Delta-2$, as is show by the join of a 
clique and a 5-cycle.  
For each $\Delta\in\{3,\ldots,8\}$, examples are known
\cite{CranstonR13claw,KingLP12} where $\omega \le \Delta-1$ but
$\chi = \Delta$.  Kostochka has informed us that already in 1977 when he and
Borodin posed Conjecture \ref{BKconj}, they believed the following stronger
``list version'' was true; however they omitted this version from their paper,
and it appeared in print \cite{CranstonR13claw} only in 2013.  We define the
list
chromatic number, denoted $\chil$, 
in Section~2 below.

\begin{conj}[Borodin-Kostochka Conjecture (list version)]
If $G$ is a graph with $\Delta\ge 9$ and $\omega\le \Delta-1$, then $\chil\le
\Delta-1$.
\label{BKlistconj}
\end{conj}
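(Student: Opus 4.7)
The natural plan is a minimum-counterexample argument in the spirit of Kierstead--Rabern and Reed. Suppose $G$ is a vertex-minimum counterexample: $\Delta(G) \ge 9$, $\omega(G) \le \Delta-1$, and $\chil(G) \ge \Delta$. Then every proper subgraph of $G$ is $(\Delta-1)$-choosable, so in particular $\delta(G) \ge \Delta-1$ (any lower-degree vertex could be deleted and extended greedily). Call a vertex \emph{low} if it has degree $\Delta-1$ and \emph{high} if it has degree $\Delta$. The hypothesis $\omega(G) \le \Delta-1$ forces every high vertex $v$ to have at least one non-edge in $G[N(v)]$, and the whole argument will revolve around exploiting these non-edges to extend list-colorings of $G - v$ to $v$ using the kernel method.

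The core technical split is a dichotomy on the local density at high vertices. Call $v$ \emph{sparse} if $G[N(v)]$ has two independent non-edges, and \emph{dense} otherwise, so that in the dense case $G[N(v)]$ is $K_\Delta$ minus a near-perfect matching sharing a common vertex. For sparse $v$, one can orient $G[N(v)]$ so that every induced subdigraph has a kernel and apply Galvin's theorem locally to extend any list-coloring of $G - v$. This reduction should leave only dense high vertices, whose matched pairs propagate across the graph; combined with $\omega \le \Delta-1$ and an analysis of the low/high interaction, this should force a contradiction via a global block decomposition or discharging argument of the style used for claw-free graphs.

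The main obstacle is precisely the dense case. Reed's probabilistic proof of the ordinary Borodin--Kostochka conjecture for enormous $\Delta$ depends on permuting colors during partial recolorings, a move that list-assignments forbid, so the list version has no direct translation of that argument. To push the conclusion down to $\Delta \ge 9$, one would need either a sharp deterministic structural theorem about dense-vertex neighborhoods in $(\Delta-1)$-list-critical graphs, or a list-coloring analogue of the probabilistic reservoir step that sidesteps color symmetry. Both appear substantially harder than any existing critical-graph argument, and this is why the conjecture has resisted proof for over forty years; a complete argument along the lines above would, at minimum, require a new structural lemma tailored to list criticality at dense vertices.
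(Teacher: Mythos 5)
You have not produced a proof, and neither does the paper: the statement you were given is Conjecture~\ref{BKlistconj}, the list version of the Borodin--Kostochka conjecture, which is \emph{open}. The paper only records it (attributing the belief to Borodin and Kostochka in 1977 and its first appearance in print to 2013) and uses it as motivation; what the paper actually proves is the much weaker consequence for squares of graphs, namely that $\chi_p(G^2)\le\Delta^2-1$ for connected non-Moore-type $G$, and it does so by a minimal-counterexample analysis of $G^2$ built on forbidden $d_1$-paintable subgraphs and Alon--Tarsi orientations --- machinery specific to squares, where $\omega(G^2)\le\Delta^2-1$ and the girth case analysis give enough structure. Nothing in the paper bears on the general conjecture for arbitrary graphs with $\Delta\ge 9$.

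Your proposal, by your own admission, has a genuine gap exactly where the difficulty lies. The reduction of sparse high vertices via a kernel/orientation argument is asserted rather than proved (Galvin-style kernel arguments do not apply off the shelf to an arbitrary neighborhood graph with two independent non-edges; you must exhibit an orientation with bounded outdegree in which every induced subdigraph has a kernel, and that is nontrivial), and the dense case is left entirely open: you state that one would need ``a new structural lemma tailored to list criticality at dense vertices'' or a list analogue of Reed's probabilistic recoloring, neither of which you supply. A plan that terminates in ``this appears substantially harder than any existing critical-graph argument'' is a correct assessment of the state of the art, but it is not a proof, and it should not be presented as one; the honest conclusion is that the statement remains a conjecture.
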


The purpose of this paper is to prove the following conjecture of Cranston and
Kim~\cite{CranstonK08}. 
In fact, we will prove this conjecture in the more general setting
of online list coloring.  It is easy to show, as we do below, that
Conjecture~\ref{BKlistconj} implies Conjecture \ref{CKconj}.

\begin{conj}[Cranston-Kim \cite{CranstonK08}]
If $G$ is a connected graph with maximum degree $\Delta\ge 3$, and $G$ is not a
Moore graph, then $\chil(G^2)\le \Delta^2-1$.
\label{CKconj}
\end{conj}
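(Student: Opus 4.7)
My plan is to prove the stronger statement $\chi_p(G^2) \le \Delta^2-1$, where $\chi_p$ denotes the online list chromatic number (paintability); since $\chi_\ell \le \chi_p$, this implies Conjecture~\ref{CKconj}. The starting point is that $\Delta(G^2) \le \Delta^2$, with equality at a vertex $v$ exactly when $v$ and all its $G$-neighbors have degree $\Delta$, $v$ lies in no triangle, and no two neighbors of $v$ share a common neighbor other than $v$. Any local deviation from this ``Moore-graph pattern'' forces $d_{G^2}(v) \le \Delta^2-2$, which is precisely the slack we need.

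As a warm-up that isolates the source of tightness, I would first show that Conjecture~\ref{BKlistconj} implies Conjecture~\ref{CKconj}. The key claim is that $\omega(G^2) \le \Delta^2-1$ whenever $G$ is connected and non-Moore: any $(\Delta^2+1)$-clique $K$ in $G^2$ must, for each $v \in K$, fill the entire $G$-ball $B_G(v,2)$, whose size is at most $\Delta^2+1$. Equality throughout forces $G$ to be $\Delta$-regular with girth $5$ and diameter $2$ on $\Delta^2+1$ vertices (since by connectedness every neighbor of a vertex of $K$ already lies in $K$), i.e., a Moore graph. Combined with $\Delta(G^2) \le \Delta^2$ and $\Delta^2 \ge 9$, the list Borodin--Kostochka conjecture then yields $\chi_\ell(G^2) \le \Delta^2-1$ directly.

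For the unconditional painting version, I would mimic this split. If some vertex is deficient ($d_{G^2}(v) \le \Delta^2-2$), a degeneracy-style painting argument --- iteratively identifying low-$G^2$-degree vertices and painting them last, in the spirit of a Brooks-type extension for $\chi_p$ --- should suffice. Otherwise every vertex of $G^2$ has degree exactly $\Delta^2$, forcing $G$ to be $\Delta$-regular with girth at least $5$; since $G$ is not Moore, $G$ has diameter at least $3$. I would then exploit a pair of vertices $u,w$ at $G$-distance exactly $3$ (non-adjacent in $G^2$ with controlled overlap between $N_{G^2}(u)$ and $N_{G^2}(w)$) to drive a painting strategy that couples their tokens and effectively recovers one unit of slack.

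The main obstacle I expect is precisely this tight regular girth-$5$ non-Moore case. Locally every count is sharp, so no purely local discharging scheme can produce positive slack, and the argument must genuinely convert the global non-Moore property into painting-game slack. Making such a coupling strategy work uniformly across all $\Delta \ge 3$, including small values where Brooks-type exceptional configurations can intrude, is where I expect the technical core of the proof to lie.
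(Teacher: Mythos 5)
Your high-level framing (prove the paintability statement, use non-Mooreness only through a clique bound on $G^2$, split on whether some vertex of $G^2$ has small degree) matches the paper, but the proposal has a genuine gap at exactly the two places where the work lies. First, the dichotomy is false: if no vertex satisfies $d_{G^2}(v)\le\Delta^2-2$, it does \emph{not} follow that every vertex has $G^2$-degree exactly $\Delta^2$. Vertices of degree $\Delta^2-1$ survive both of your cases; in the regular setting these are precisely vertices lying on (exactly one) $4$-cycle, so the whole girth-$4$ case is unaccounted for. These vertices cannot be dispatched by any degeneracy or Brooks-type argument, since their list/token size $\Delta^2-1$ equals their degree --- this is the Borodin--Kostochka regime, where saving a color is exactly the open difficulty. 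The paper devotes its entire Case 1 to this, using $f$-paintable configurations ($K_3\vee\overline{K_2}$ and $K_4\vee\overline{K_2}$ with prescribed low vertices) and then a clique-growing argument against the bound $\omega(G^2)\le\Delta^2-1$. Relatedly, your warm-up derivation of $\omega(G^2)\le\Delta^2-1$ is incomplete: the ball argument only excludes cliques of size $\Delta^2+1$ (Moore graphs); excluding $\omega(G^2)=\Delta^2$ requires the Erd\H{o}s--Fajtlowicz--Hoffman theorem that no graph with $\Delta\ge3$ satisfies $G^2=K_{\Delta^2}$, plus an additional neighborhood argument.

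Second, for the tight regular girth-$\ge 5$ case you offer only the hope that a distance-$3$ pair can have its ``tokens coupled'' to recover one unit of slack, and you acknowledge yourself that this is the unproved core. The paper does something quite different and concrete: it shows that a minimal counterexample's square must contain one of a catalogue of induced subgraphs that are $d_1$-paintable (girth $6$, $7$, and $\ge 8$ via squared cycles with one or two pendant edges; girth $5$ via a classification of the neighbors of a $5$-cycle into $B_0$-, $B_1$-, $B_2$-vertices), with paintability of these subgraphs proved by explicit Painter strategies and by Schauz's paintability version of the Alon--Tarsi theorem, together with a reducibility lemma tailored to squares (Lemma~\ref{subgraphlemmaG^2}); when none of these subgraphs appears, it assembles a clique of size $\Delta^2+1$ in $G^2$, contradicting the clique bound --- that is where the global non-Moore hypothesis actually enters, not through a diameter-$3$ pair. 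Without some replacement for this reducible-configuration machinery, the ``slack recovery'' step in your plan is not an argument, so the proposal as it stands does not prove the conjecture.
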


A Moore graph is a $\Delta$-regular graph $G$ on
$\Delta^2+1$ vertices such that $G^2=K_{\Delta^2+1}$; the sole example when
$\Delta=3$ is the Petersen graph.  Hoffman and Singleton~\cite{HoffmanS60}
famously proved that Moore graphs exist only when $\Delta\in \{2,3,7,57\}$. 
When $\Delta\in\{2,3,7\}$ Moore graphs exist and are known to be unique, and
when $\Delta=57$ no Moore graph is known.

In 2008 Cranston and Kim \cite{CranstonK08} proved Conjecture~\ref{CKconj} when
$\Delta=3$, and suggested that a similar but more detailed approach might
prove the whole conjecture.  
As mentioned above, it is easy to show that
Conjecture~\ref{CKconj} is implied by Conjecture~\ref{BKlistconj}.
The key is the following easy lemma at the end of \cite{CranstonK08}: 
If $G$ is connected and is not a Moore graph and $G$ has maximum degree $\Delta
\ge3$, then $G^2$ has clique number at most $\Delta^2 - 1$.
The proof is short once we have a result of Erd\H{o}s, Fajtlowicz,
and Hoffman~\cite{ErdosFH80} stating that a ``near-Moore graph'', i.e., 
a $\Delta$-regular graph such that $G^2=K_{\Delta^2}$, exists only when
$\Delta=2$.  For details, see the start of the proof of the Main
Theorem.

We note that recently Conjecture \ref{CKconj} was generalized to
higher powers.  Let $M$ denote the maximum possible degree when a graph of
maximum degree $k$ is raised to the $d$th power, i.e., vertices are adjacent in
$G^d$ if they are distance at most $d$ in $G$.  Miao and Fan~\cite{MiaoF13+}
conjectured that if $G$ is connected and $G^d$ is not $K_{M+1}$, then we can
save one color over the bound given by Brooks Theorem, i.e., $\chi(G^d)\le M-1$.
This was proved by Bonamy and Bousquet~\cite{BonamyB13+} in the more general
context of online list coloring.

%
\smallskip

The following conjecture is due to Wegner~\cite{Wegner77}, in the late 1970's. 
It is a less well-known variant of Wegner's analogous conjecture when the class
$\G_k$ is restricted to planar graphs.

\begin{conj}[Wegner~\cite{Wegner77}]
For each fixed $k$, let $\G_{k}$ denote the class of all graphs with
maximum degree at most $k$ and form $\G^2_{k}$ by taking the square
$G^2$ of each graph $G$ in $\G_{k}$. 
Now $\max_{H\in
\G^2_{k}}\chi(H)=\max_{H\in \G^2_{k}}\omega(H)$.
\label{wegner}
\end{conj}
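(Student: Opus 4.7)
The direction $\max_{H\in\G_k^2}\chi(H)\ge \max_{H\in\G_k^2}\omega(H)$ is immediate from $\chi\ge\omega$, so the substantive task is the reverse inequality. The plan is to combine the upper bound from the Main Theorem on $\chi(G^2)$ with an explicit construction that realizes a clique of matching size in some $G'^2\in\G_k^2$.

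First I would reduce to connected $G\in\G_k$, since both $\chi(G^2)$ and $\omega(G^2)$ are determined by the maxima of these parameters over the components of $G$. Fix such a $G$ and set $\Delta:=\Delta(G)\le k$. If $G$ is a Moore graph, then $\Delta\in\{2,3,7,57\}$ and $G^2=K_{\Delta^2+1}$, so $\chi(G^2)=\omega(G^2)=\Delta^2+1\le k^2+1$. Otherwise, when $\Delta\ge 3$, the Main Theorem gives $\chi(G^2)\le\chil(G^2)\le\Delta^2-1\le k^2-1$, and the cases $\Delta\le 2$ are handled by inspection. Thus on the $\chi$-side we have $\max_{H\in\G_k^2}\chi(H)\le k^2+1$ when $k\in\{2,3,7,57\}$ and $\le k^2-1$ otherwise.

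It then suffices to exhibit a witness $G'\in\G_k$ with $\omega(G'^2)$ attaining the corresponding bound. For $k\in\{2,3,7\}$, and for $k=57$ if a Moore graph of that degree exists, the Moore graph itself works and gives $\omega(G'^2)=k^2+1$. For the remaining $k$, one needs $G'\in\G_k$ containing $k^2-1$ vertices that are pairwise at distance at most $2$ in $G'$, equivalently, a diameter-$2$ subgraph of maximum degree at most $k$ on $k^2-1$ vertices. Natural candidate constructions come from Cayley graphs on small cyclic or abelian groups, strongly regular graphs, and incidence graphs of combinatorial designs.

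The final step is the main obstacle. Producing, for every $k\notin\{2,3,7,57\}$, a diameter-$2$ graph of maximum degree $k$ on $k^2-1$ vertices is a genuinely delicate problem in extremal design theory and likely requires different families of constructions in different residue classes of $k$, supplemented by ad hoc verification for small exceptional values. The case $k=57$ is also subtle: Wegner's identity still holds regardless of whether a Moore graph of degree $57$ exists, but if it does not, the argument again reduces to the same $(k^2-1)$-clique construction problem with $k=57$. Apart from this construction step, all remaining inequalities reduce to the Main Theorem or to elementary counting from the vertex-neighborhood bound $|N[v]\cup N^2(v)|\le \Delta^2+1$.
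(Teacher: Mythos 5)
The statement you are trying to prove is a \emph{conjecture} in this paper, not a theorem: the authors never prove Wegner's equality for general $k$. What the paper actually establishes is exactly the two ingredients you describe, but only where both are available: the Main Theorem gives $\chi(G^2)\le\chil(G^2)\le\Delta^2-1$ for non-Moore connected $G$, and for $k=4$ and $k=5$ the matching witnesses are Elspas's graphs $G_1,G_2$ with $G_1^2=K_{15}$ and $G_2^2=K_{24}$; this confirms Wegner's conjecture for $k\in\{2,3,7\}$ (Moore graphs), $k=4$, and $k=5$, and nothing more. So your first two paragraphs reproduce the paper's reasoning, but the step you flag as ``the main obstacle'' is not a deferrable technicality --- it is precisely the open part, and your proposal does not prove the statement.

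There is also a conceptual flaw in the reduction itself. Wegner's conjecture asserts $\max_{H\in\G_k^2}\chi(H)=\max_{H\in\G_k^2}\omega(H)$; it does \emph{not} assert that this common value is $k^2-1$. Hence ``exhibit $G'\in\G_k$ with $\omega(G'^2)=k^2-1$'' is not an equivalent reformulation. Graphs of maximum degree $k$ and diameter $2$ on $k^2-1$ vertices are Moore graphs of defect $2$, and for most $k$ such graphs are believed (and in many cases known) not to exist --- the paper itself notes that Elspas's examples are the \emph{unique} such graphs for $\Delta\in\{4,5\}$. If the witness does not exist for some $k$, the conjecture does not fail; instead it demands an upper bound on $\max\chi$ strictly below $k^2-1$, matching whatever $\max\omega$ actually is, and the Main Theorem gives no such improvement. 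The same issue undermines your claim that ``Wegner's identity still holds regardless'' for $k=57$: if no Moore graph of degree $57$ exists, nothing in the paper (or in your argument) pins down either side of the equality. In short, your approach can at best reprove the cases the paper already handles; for the remaining $k$ it would not merely be hard to complete --- it is aimed at a statement ($\max\omega=k^2-1$) that is likely false, while the actual conjecture remains open.
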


Wegner in fact posed a more general conjecture for all powers of $\G_{k}$;
however, here we restrict our attention to Conjecture~\ref{wegner},
specifically for small values of $k$.  For each $H\in \G^2_k$, we have
$\Delta(H)\le k^2$, so Brooks' Theorem implies that $\chi(H)\le k^2$ unless
some component of $H$ is $K_{k^2+1}$.  For $k=1$ Wegner's Conjecture is
trivial. For
$k\in \{2,3,7\}$ it is easy; in each case $\G_k$ contains a Moore graph $G$,
and letting $H=G^2$, we have $H=K_{k^2+1}$, so $\chi(H)=\omega(H)=k^2+1$.  Thus, the first two
open cases of Conjecture~\ref{wegner} are $k=4$ and $k=5$.  Our Main Theorem
shows that every graph $G$ in $\G_4$ satisfies $\chil(G^2)\le 15$ and every
graph $G$ in $\G_5$ satisfies $\chil(G^2)\le 24$.  Matching lower bounds are
shown in Figure~2: we have $G_1\in \G_4$ with $\omega(G_1^2)=15$ and $G_2\in
G_5$ with $\omega(G_2^2)=24$. Both graphs were discovered by
Elspas (\cite{Elspas64} and p.~14 of \cite{MillerS-Survey}) and are known to be
the unique graphs $G$ with $\Delta\in\{4,5\}$ and $G^2 = K_{\Delta^2-1}$. 
This confirms Wegner's Conjecture when $k=4$ and $k=5$.

\begin{figure}
\begin{center}
\begin{tikzpicture}[scale = 5]
\tikzstyle{VertexStyle}=[shape = circle,	
minimum size = 5pt, inner sep = 2pt,
                                 draw]
\Vertex[x = 1.5, y = -0.0499999523162842, L = \tiny {}]{v0}
\Vertex[x = 1.60000002384186, y = 0.100000023841858, L = \tiny {}]{v1}
\Vertex[x = 1.35000002384186, y = 0.199999988079071, L = \tiny {}]{v2}
\Vertex[x = 1.54999995231628, y = 0.199999988079071, L = \tiny {}]{v3}
\Vertex[x = 1.5, y = 0.449999988079071, L = \tiny {}]{v4}
\Vertex[x = 1.60000002384186, y = 0.300000011920929, L = \tiny {}]{v5}
\Vertex[x = 1.79999995231628, y = 0.449999988079071, L = \tiny {}]{v6}
\Vertex[x = 1.70000004768372, y = 0.300000011920929, L = \tiny {}]{v7}
\Vertex[x = 1.95000004768372, y = 0.199999988079071, L = \tiny {}]{v8}
\Vertex[x = 1.75, y = 0.199999988079071, L = \tiny {}]{v9}
\Vertex[x = 1.79999995231628, y = -0.0499999523162842, L = \tiny {}]{v10}
\Vertex[x = 1.70000004768372, y = 0.100000023841858, L = \tiny {}]{v11}
\Vertex[x = 2.25, y = -0.149999976158142, L = \tiny {}]{v12}
\Vertex[x = 1.04999995231628, y = -0.149999976158142, L = \tiny {}]{v13}
\Vertex[x = 1.64999997615814, y = 0.849999994039536, L = \tiny {}]{v14}
\Edge[](v0)(v1)
\Edge[](v0)(v3)
\Edge[](v0)(v11)
\Edge[](v0)(v12)
\Edge[](v1)(v2)
\Edge[](v1)(v7)
\Edge[](v1)(v10)
\Edge[](v2)(v3)
\Edge[](v2)(v5)
\Edge[](v2)(v14)
\Edge[](v3)(v4)
\Edge[](v3)(v9)
\Edge[](v4)(v5)
\Edge[](v4)(v7)
\Edge[](v4)(v13)
\Edge[](v5)(v6)
\Edge[](v5)(v11)
\Edge[](v6)(v7)
\Edge[](v6)(v9)
\Edge[](v6)(v12)
\Edge[](v7)(v8)
\Edge[](v8)(v9)
\Edge[](v8)(v11)
\Edge[](v8)(v14)
\Edge[](v9)(v10)
\Edge[](v10)(v11)
\Edge[](v10)(v13)
\Edge[](v12)(v13)
\Edge[](v12)(v14)
\Edge[](v13)(v14)
\end{tikzpicture}
\hspace{.5in}
\begin{tikzpicture}[scale = 5]
\tikzstyle{VertexStyle}=[shape = circle, minimum size = 5pt, inner sep = 2pt, draw]
\Vertex[x = 0.800000011920929, y = 0.75, L = \tiny {}]{v0}
\Vertex[x = 0.800000011920929, y = 0.149999976158142, L = \tiny {}]{v1}
\Vertex[x = 1.79999995231628, y = 0.75, L = \tiny {}]{v2}
\Vertex[x = 1.79999995231628, y = 0.149999976158142, L = \tiny {}]{v3}
\Vertex[x = 1.29999995231628, y = 0.149999976158142, L = \tiny {}]{v4}
\Vertex[x = 1.29999995231628, y = 0.75, L = \tiny {}]{v5}
\Vertex[x = 0.649999976158142, y = 0.399999976158142, L = \tiny {}]{v6}
\Vertex[x = 0.949999988079071, y = 0.399999976158142, L = \tiny {}]{v7}
\Vertex[x = 0.800000011920929, y = 0.399999976158142, L = \tiny {}]{v8}
\Vertex[x = 1.95000004768372, y = 0.399999976158142, L = \tiny {}]{v9}
\Vertex[x = 1.79999995231628, y = 0.399999976158142, L = \tiny {}]{v10}
\Vertex[x = 1.64999997615814, y = 0.399999976158142, L = \tiny {}]{v11}
\Vertex[x = 1.45000004768372, y = 0.399999976158142, L = \tiny {}]{v12}
\Vertex[x = 1.14999997615814, y = 0.399999976158142, L = \tiny {}]{v13}
\Vertex[x = 1.29999995231628, y = 0.399999976158142, L = \tiny {}]{v14}
\Vertex[x = 0.649999976158142, y = -0.25, L = \tiny {}]{v15}
\Vertex[x = 0.949999988079071, y = -0.25, L = \tiny {}]{v16}
\Vertex[x = 0.800000011920929, y = 0, L = \tiny {}]{v17}
\Vertex[x = 1.29999995231628, y = 0, L = \tiny {}]{v18}
\Vertex[x = 1.45000004768372, y = -0.25, L = \tiny {}]{v19}
\Vertex[x = 1.14999997615814, y = -0.25, L = \tiny {}]{v20}
\Vertex[x = 1.64999997615814, y = -0.25, L = \tiny {}]{v21}
\Vertex[x = 1.95000004768372, y = -0.25, L = \tiny {}]{v22}
\Vertex[x = 1.79999995231628, y = 0, L = \tiny {}]{v23}
\Edge[](v0)(v3)
\Edge[style = {bend left}](v0)(v5)
\Edge[style = {ultra thick}](v0)(v6)
\Edge[style = {ultra thick}](v0)(v7)
\Edge[style = {ultra thick}](v0)(v8)
\Edge[](v1)(v2)
\Edge[style = {bend right}](v1)(v4)
\Edge[style = {ultra thick}](v1)(v6)
\Edge[style = {ultra thick}](v1)(v7)
\Edge[style = {ultra thick}](v1)(v8)
\Edge[style = {bend right}](v2)(v5)
\Edge[style = {ultra thick}](v2)(v9)
\Edge[style = {ultra thick}](v2)(v10)
\Edge[style = {ultra thick}](v2)(v11)
\Edge[style = {bend left}](v3)(v4)
\Edge[style = {ultra thick}](v3)(v9)
\Edge[style = {ultra thick}](v3)(v10)
\Edge[style = {ultra thick}](v3)(v11)
\Edge[style = {ultra thick}](v4)(v12)
\Edge[style = {ultra thick}](v4)(v13)
\Edge[style = {ultra thick}](v4)(v14)
\Edge[style = {ultra thick}](v5)(v12)
\Edge[style = {ultra thick}](v5)(v13)
\Edge[style = {ultra thick}](v5)(v14)
\Edge[style = {ultra thick}](v6)(v15)
\Edge[](v6)(v19)
\Edge[](v6)(v22)
\Edge[style = {ultra thick}](v7)(v16)
\Edge[](v7)(v20)
\Edge[](v7)(v23)
\Edge[style = {ultra thick, bend right}](v8)(v17)
\Edge[](v8)(v18)
\Edge[](v8)(v21)
\Edge[](v9)(v16)
\Edge[](v9)(v18)
\Edge[style = {ultra thick}](v9)(v22)
\Edge[](v10)(v17)
\Edge[](v10)(v19)
\Edge[style = {ultra thick, bend right}](v10)(v23)
\Edge[](v11)(v15)
\Edge[](v11)(v20)
\Edge[style = {ultra thick}](v11)(v21)
\Edge[](v12)(v16)
\Edge[style = {ultra thick}](v12)(v19)
\Edge[](v12)(v21)
\Edge[](v13)(v17)
\Edge[style = {ultra thick}](v13)(v20)
\Edge[](v13)(v22)
\Edge[](v14)(v15)
\Edge[style = {ultra thick, bend right}](v14)(v18)
\Edge[](v14)(v23)
\Edge[style = {ultra thick}](v15)(v16)
\Edge[style = {ultra thick}](v15)(v17)
\Edge[style = {ultra thick}](v16)(v17)
\Edge[style = {ultra thick}](v18)(v19)
\Edge[style = {ultra thick}](v18)(v20)
\Edge[style = {ultra thick}](v19)(v20)
\Edge[style = {ultra thick}](v21)(v22)
\Edge[style = {ultra thick}](v21)(v23)
\Edge[style = {ultra thick}](v22)(v23)
\end{tikzpicture}
\caption{On the left is a 4-regular graph $G_1$ such that $G_1^2=K_{15}$.\\  On
the right is a 5-regular graph $G_2$ such that $G_2^2=K_{24}$.~~~~~~~~~~~
}
\end{center}
\end{figure}
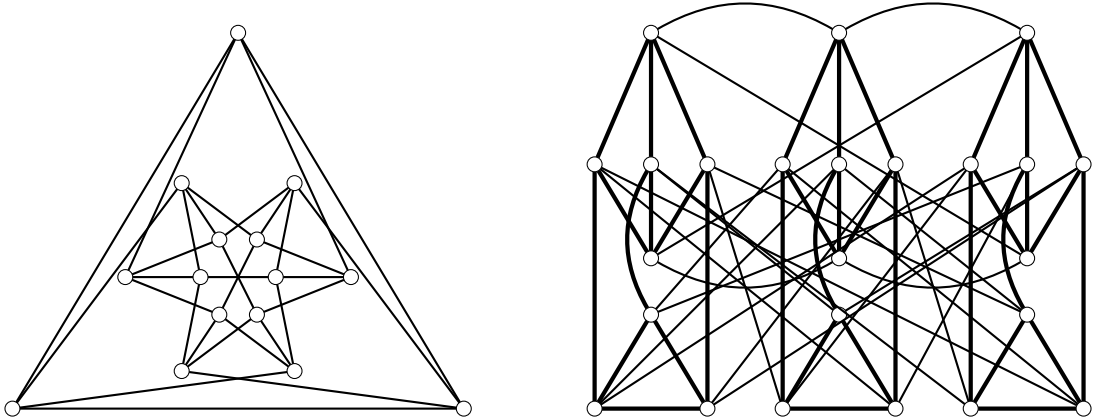

Rather than coloring, or even list coloring, this paper is about \emph{online
list coloring}, a generalization introduced in 2009 by Schauz \cite{Schauz09}
and Zhu \cite{Zhu09online}, and the \emph{online list chromatic number},
$\chi_p$, also called the \emph{paint number}.  We give the definition in
Section \ref{prelims}, but for now if you are unfamiliar with $\chi_p$,
you can substitute $\chil$ (or even $\chi$) and the Main Theorem remains true.
 Our main result is the following.

\begin{mainthm}
If $G$ is a connected graph with maximum degree $\Delta\ge 3$ and
$G$ is not the Peterson graph, the Hoffman-Singleton graph, or a Moore graph
with $\Delta=57$, then $\chi_p(G^2)\le \Delta^2-1$. 
\label{mainthm}
\end{mainthm}

We conclude this section with the following conjecture, which generalizes our
Main Theorem as well as Conjecture \ref{BKlistconj}.

\begin{conj}[Borodin-Kostochka Conjecture (online list coloring version)]
If $G$ is a graph with $\Delta\ge 9$ and $\omega\le \Delta-1$, then $\chiol\le
\Delta-1$.
\label{BKpaint}
\end{conj}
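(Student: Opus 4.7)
The plan is to adapt the minimum-counterexample framework for the classical Borodin--Kostochka conjecture to the online list coloring setting. Suppose $G$ is a counterexample minimizing $|V(G)|$: so $\Delta(G) \ge 9$, $\omega(G) \le \Delta - 1$, and $\chi_p(G) \ge \Delta$, but every proper induced subgraph has paint number at most $\Delta - 1$. The first step is to derive structural consequences from online list criticality: a standard argument (the paint-number analog of the observation that critical graphs have minimum degree $\ge \chi - 1$) gives $\delta(G) \ge \Delta - 1$, and I would then aim to show that the set $L$ of ``low'' vertices (those of degree exactly $\Delta - 1$) induces a disjoint union of cliques, with constrained attachment to $V(G)\setminus L$ --- the paint-number analog of Gallai's classical low-vertex subgraph theorem.

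Next, I would invoke a Mozhan-type decomposition. Under the hypothesis $\omega(G) \le \Delta - 1$, Mozhan's technique partitions $V(G)$ into $\Delta - 3$ parts --- most of them singletons, with a few small cliques --- and analyzes local rearrangements that preserve properness, eventually forcing $G$ to contain a specific dense subgraph (typically a large clique joined to a sparse piece) which can then be ruled out for $\Delta \ge 9$ by direct case analysis. The paint-number version requires an ``online Mozhan lemma'': given a Painter strategy on a single color for each part together with compatible strategies for certain twisted variants, stitch these into a strategy for $G$ using $\Delta - 1$ colors. Combining this with the criticality structure from Step~1 should reduce the problem to analyzing a short list of extremal configurations directly, each of which a Painter algorithm can handle.

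The main obstacle is the transition to the online setting. Recoloring arguments, Kempe chains, and Alon--Tarsi-style polynomial techniques that drive critical-graph analyses in list coloring all require the colorer to know the lists in advance, and therefore do not transfer to paint number. Each such step must be replaced by an explicit Painter algorithm verified against every Lister strategy, which is a qualitatively different combinatorial task. Since the list analog, Conjecture~\ref{BKlistconj}, is itself open, any proof of Conjecture~\ref{BKpaint} will likely subsume or parallel a proof of the list version while also absorbing the additional cost of committing each color without lookahead. This online restriction is, in my view, the fundamental barrier, and it is why the conjecture is posed here as still open rather than derived from the Main Theorem.
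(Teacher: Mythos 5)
You have not produced a proof, and the paper does not contain one either: the statement you were given is Conjecture~\ref{BKpaint}, which the authors pose as an open strengthening of their Main Theorem and of the (also open) list version, Conjecture~\ref{BKlistconj}. There is therefore no argument in the paper to compare yours against, and your own text correctly ends by conceding that the statement remains open. Judged as a proof, the proposal has a fundamental gap: every load-bearing step is a placeholder rather than an established fact. The ``paint-number analog of Gallai's low-vertex theorem,'' the claim that a Mozhan-type partition into $\Delta-3$ parts forces a reducible dense configuration for \emph{every} $\Delta\ge 9$, and the ``online Mozhan lemma'' that stitches per-part Painter strategies together are each unproved, and the second of these would already settle the classical Borodin--Kostochka conjecture, which after decades is known only for very large $\Delta$ (Reed) and in special classes such as claw-free graphs. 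Since $\chi\le\chil\le\chiol$, your plan would have to subsume proofs of both Conjecture~\ref{BKconj} and Conjecture~\ref{BKlistconj} before even confronting the online difficulty you identify (recoloring, Kempe-chain, and Alon--Tarsi arguments not transferring to Painter's commit-without-lookahead constraint).

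That said, your assessment of the landscape is accurate and consistent with the paper: the authors prove only the special case embodied in the Main Theorem (squares of graphs, via forbidden $d_1$-paintable subgraphs and Theorem B), and they explicitly leave Conjecture~\ref{BKpaint} as a conjecture. If you want to make partial progress in this direction, a more realistic target than the full conjecture is the remark at the end of Section~\ref{mainproof}: extending the $d_1$-paintability conclusion to squares of graphs of girth 3, 4, or 5, where the authors note their method currently falls short of the stronger ``$\chiol(G^2)\le\Delta(G^2)-1$ unless $\omega(G^2)\ge\Delta(G^2)$'' statement.
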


The structure of the paper is as follows.  
In Section \ref{prelims} we give background and definitions.
In Section~\ref{mainproof}, we prove the Main Theorem, subject to a number of
lemmas about forbidden subgraphs in a minimal counterexample.
In Section \ref{lemmas} we prove the lemmas that we deferred in
Section \ref{mainproof}.  Finally, in Section \ref{AT-section}, we generalize
the online list chromatic number to the Alon-Tarsi number, and extend our Main
Theorem to that setting.

\section{Preliminaries}
\label{prelims}

Here we give definitions and background.  Most of our terminology and notation
is standard.  
We write $A\setminus B$ for $A\cap \overline{B}$.  If $H$ is a subgraph of
$G$, then $G\setminus H$ means $G[V(G)\setminus V(H)]$, that is $G$ with the
vertices of $H$ deleted.
For graphs $G$ and $H$, the \emph{join} $G\vee H$ is formed from the disjoint
union of $G$ and $H$ by adding all edges with one endpoint in each of $V(G)$
and $V(H)$.  For any undefined terms, see West \cite{West-IGT}.

A \emph{list size assignment} $f: V(G)\to \Z^+$ assigns to each vertex in $G$ a
list size.  An \emph{$f$-assignment $L$} assigns to each vertex $v$ a subset of
the positive integers $L(v)$ with $|L(v)|=f(v)$.  An $L$-coloring is a proper
coloring $\phi$ such that $\phi(v)\in L(v)$ for all $v$.  A graph $G$ is
\emph{$f$-list colorable} (or \emph{$f$-choosable}) if $G$ has an $L$-coloring
for every $f$-assignment $L$.  In particular, we are interested in the case
where $f(v)=k$ for all $v$ and some constant $k$.  The \emph{list chromatic
number} of $G$ or \emph{choice number} of $G$, denoted $\chil(G)$, or simply
$\chil$ when $G$ is clear from context, is the minimum $k$ such that $G$
is $k$-choosable.
List coloring was introduced by Vizing \cite{Vizing76} and Erd\H{o}s, Rubin, and
Taylor \cite{ErdosRT79} in the 1970s.  Both groups proved the following
extension of Brooks' Theorem.
%
If $G$ is a graph with maximum degree $\Delta\ge 3$ and clique number
$\omega\le \Delta$, then $\chil\le \Delta$.

The next idea we need came about 30 years later.  In 2009,
Schauz~\cite{Schauz09} and Zhu~\cite{Zhu09online} independently introduced
the notion of online list coloring.  This is a variation of list coloring, in
which the list sizes are determined (each vertex $v$ gets $f(v)$ colors), but
the lists themselves are provided
online by an adversary.  

We consider a game between two players, \emph{Lister} and \emph{Painter}.
In round 1, Lister presents the set of all vertices whose lists contain color
1.  Painter must then use color 1 on some independent subset of these
vertices, and cannot change this set in the future.  In each subsequent round
$k$, Lister chooses some subset of the uncolored vertices to contain color $k$
in their lists, and Painter
chooses some independent subset of these vertices to receive color $k$.
Painter wins if he succeeds in painting all vertices.  Alternatively, 
Lister wins if he includes a vertex $v$ among those presented on each of
$f(v)$ rounds, but Painter never paints $v$.

A graph is \emph{online
$k$-list colorable} (or \emph{$k$-paintable}) if Painter can win
whenever $f(v)=k$ for all $v$.  The minimum $k$ such that a graph $G$ is online
$k$-list colorable is its \emph{online list chromatic number}, or \emph{paint
number}, denoted $\chiol$.  A graph is \emph{$d_1$-paintable} if it is
paintable when $f(v)=d(v)-1$ for each vertex $v$.  
In \cite{CranstonR-equiv}, the authors introduced \emph{$d_1$-choosable}
graphs, which are the list-coloring analogue. 
Interest in $d_1$-paintable graphs owes to the fact that none can be induced
subgraphs of a minimal graph with maximum degree $\Delta$ that is not
$(\Delta-1)$-paintable.  In particular, if $G$ is a minimal counterexample to
our Main Theorem, then $G^2$ contains no induced $d_1$-paintable subgraph.  

\begin{lemma}
Let $G$ be a graph with maximum degree $\Delta$ and $H$ be an induced subgraph
of $G$ that is $d_1$-paintable.  If $G\setminus H$ is $(\Delta-1)$-paintable,
then $G$ is $(\Delta-1)$-paintable.
\label{subgraphlemma}
\end{lemma}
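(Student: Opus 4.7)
The plan is to exhibit a winning Painter strategy for the $(\Delta-1)$-paintability game on $G$ by running two sub-games in parallel: a sub-game on $G\setminus H$ using its assumed $(\Delta-1)$-paintable strategy, and a sub-game on $H$ using its assumed $d_1$-paintable strategy (where the target list size at $v\in V(H)$ is $d_H(v)-1$). Painter in the main game will interleave these: in round $r$, when Lister presents an uncolored set $S_r$, split $S_r$ into $A_r=S_r\setminus V(H)$ and $B_r=S_r\cap V(H)$. First I would forward $A_r$ to the $G\setminus H$ sub-game to obtain an independent $I_r^1\subseteq A_r$, and paint $I_r^1$. Then set $B_r'=B_r\setminus N_G(I_r^1)$, forward $B_r'$ to the $H$ sub-game to obtain an independent $I_r^2\subseteq B_r'$, and paint $I_r^2$.

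Independence of $I_r^1\cup I_r^2$ in $G$ is built in: $I_r^1$ is independent in $G\setminus H$ and $I_r^2$ is independent in $H$ by the sub-strategies, and $B_r'$ was defined precisely to exclude any vertex adjacent to $I_r^1$. Correctness for vertices in $V(G)\setminus V(H)$ is immediate, since every main-game presentation of such a vertex is forwarded verbatim to the (winning) $G\setminus H$ sub-game.

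The heart of the argument is the count for $v\in V(H)$. A main-game presentation of $v$ is \emph{not} forwarded to the $H$ sub-game only when $v$ has some neighbor $u\in N_G(v)\setminus V(H)$ in $I_r^1$; that is, $u$ gets painted in round $r$. Since painted vertices never reappear in later $S_{r'}$, each outside neighbor of $v$ can account for at most one such ``missed'' round, so the number of missed rounds for $v$ is at most $|N_G(v)\setminus V(H)|\le \Delta-d_H(v)$. Consequently, if $v$ were presented in the main game $\Delta-1$ times without ever being painted, then at least $(\Delta-1)-(\Delta-d_H(v))=d_H(v)-1$ of those presentations would have been forwarded to the $H$ sub-game without $v$ being painted there, contradicting the fact that the $d_1$-paintable strategy for $H$ paints $v$ within $d_H(v)-1$ presentations.

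The only real obstacle is the bookkeeping in the last paragraph, specifically the injective charging of missed rounds to distinct outside neighbors; everything else is a direct simulation. Once that is in place, Painter's combined strategy paints every vertex within its allotted $\Delta-1$ presentations, proving that $G$ is $(\Delta-1)$-paintable.
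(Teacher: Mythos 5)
Your proposal is correct and is essentially the paper's own argument: the same two-simulation strategy (forwarding $S_r\setminus V(H)$ to a $(\Delta-1)$-game on $G\setminus H$, then forwarding the unblocked $H$-vertices to a $(d_H(v)-1)$-game on $H$), with the same counting that a vertex $v\in V(H)$ misses at most $d_G(v)-d_H(v)\le\Delta-d_H(v)$ presentations. Your explicit injective charging of missed rounds to painted outside neighbors is just a spelled-out version of the paper's bound, so no changes are needed.
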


\begin{proof}
Let $G$ and $H$ satisfy the hypotheses.  We give an algorithm for Painter to win
the online coloring game when $f(v)=\Delta-1$ for all $v$.  Painter will 
simulate playing two games simultaneously: a game on $G\setminus H$ with
$f(v)=\Delta-1$ and a game on $H$ with $f(v)=d_H(v)-1$.
Let $S_k$ denote the set of vertices presented by Lister on round $k$.  Painter
first plays round $k$ of the game on $G\setminus H$, pretending that Lister
listed the vertices $S_k\setminus H$.  Let $I_k$ denote the independent set of
these that Painter chooses to color $k$.  

Let $S'_k=(S_k\cap V(H))\setminus I_k$, the vertices of $H$ that are in $S_k$
and have no neighbor in $I_k$.  Now Painter plays round $k$ of the game on $H$,
pretending that Lister listed $S'_k$.  Each vertex in $V(G\setminus H)$ will
clearly be listed $\Delta-1$ times.  Consider a vertex $v$ in $V(H)$.  It will
appear in $S_k\setminus S'_k$ for at most $d_G(v)-d_H(v)$ rounds.  So $v$ will
appear in $S'_k$ for at least $(\Delta-1)-(d_G(v)-d_H(v))\ge d_H(v)-1$ rounds.  Now Painter will win both simulated games, and thus win the actual game
on $G$.
\end{proof}

When the graph $G$ in Lemma~\ref{subgraphlemma} is a square, we immediately get
that $G\setminus H$ is $(\Delta-1)$-paintable, as we note in the next lemma.

\begin{lemma}
Let $G$ be a graph with maximum degree $\Delta$ and let $H$ be an induced
subgraph of $G^2$.  If $H$ is $d_1$-paintable, then $G^2$ is $d_1$-paintable. 
If there exists $v$ with $d_{G^2}(v)<\Delta^2-1$, then $G^2$ is
$(\Delta^2-1)$-paintable.
\label{subgraphlemmaG^2}
\end{lemma}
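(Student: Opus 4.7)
The plan is to derive both claims from Lemma~\ref{subgraphlemma} applied with the ambient graph $G^2$, which has maximum degree at most $\Delta^2$.

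For the first claim, with the given $d_1$-paintable induced $H\subseteq G^2$, Lemma~\ref{subgraphlemma} (with $G$ there replaced by $G^2$ and $\Delta$ by $\Delta^2$) reduces the task to showing $(G^2)\setminus H$ is $(\Delta^2-1)$-paintable. I plan to verify this by peeling boundary vertices: any $u\in V(G^2)\setminus V(H)$ adjacent in $G^2$ to $V(H)$ satisfies $d_{(G^2)\setminus H}(u)\le\Delta^2-1$, since it has lost at least one neighbor to $H$. By the standard online ``deferral'' argument, Painter can set such a $u$ aside to paint at the end of the game; iterating, and inducting on $|V(G^2)\setminus V(H)|$, yields paintability of $(G^2)\setminus H$.

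For the second claim, a single vertex $v$ with $d_{G^2}(v)\le\Delta^2-2$ is handled by the same kind of deferral: Painter plays the game on $G^2-v$ with lists of size $\Delta^2-1$, sets $v$ aside, and paints $v$ in the first presentation without a color conflict. Since $v$ has at most $\Delta^2-2$ neighbors and each is painted at most once, at most $\Delta^2-2$ of $v$'s presentations can be blocked, so a safe round arises within the $\Delta^2-1$ rounds in which Lister must present $v$ to win.

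The main obstacle will be making this deferral rigorous in the online setting: unlike in offline list coloring, Painter must commit each round, so ``set $u$ aside and paint it at the end'' requires an auxiliary-game simulation in the spirit of Lemma~\ref{subgraphlemma}'s proof, tracking residual lists for deferred vertices in parallel with the main game. The hypothesis in part (b) that $G^2-v$ is $(\Delta^2-1)$-paintable must also be established separately, typically by induction on $|V(G)|$ within the Main Theorem's overall proof.
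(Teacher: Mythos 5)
Both halves of your plan have genuine gaps, and they trace to the same missing idea. For the first claim, deferring (``setting aside'') a vertex $u$ in the online game is only safe when its list size \emph{strictly} exceeds its degree in the graph currently being played on: each neighbor of $u$ can block at most one of $u$'s presentations, so you need $d(u)\le f(u)-1$. A boundary vertex of $(G^2)\setminus H$ is only guaranteed to have lost \emph{one} neighbor to $H$, giving $d_{(G^2)\setminus H}(u)\le \Delta^2-1$, which equals the list size $\Delta^2-1$; that is exactly at the threshold, and the same shortfall recurs at every later peeling step, since each newly peeled vertex is only guaranteed one neighbor among the removed set. An argument that uses nothing beyond ``one lost neighbor'' cannot work: a copy of $K_{\Delta^2}$ in which every vertex has exactly one neighbor in $H$ satisfies your hypotheses but is not $(\Delta^2-1)$-paintable. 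The paper's proof manufactures a \emph{second} free neighbor from the square structure: it contracts $H$ in $G$ (not in $G^2$), takes a spanning tree $T$ rooted at the contracted vertex, and orders $V(G)\setminus V(H)$ by nonincreasing $T$-distance; the next two vertices on the tree path from $v$ toward the root are within $G$-distance $2$ of $v$, hence are $G^2$-neighbors of $v$ that come later in the order or lie in $H$. With two non-blocking neighbors per vertex, the greedy maximal-independent-set strategy succeeds with only $d_{G^2}(v)-1$ presentations, which also delivers the stated conclusion that $G^2$ is $d_1$-paintable; your route through Lemma~\ref{subgraphlemma} with $\Delta$ replaced by $\Delta^2$ would at best give $(\Delta^2-1)$-paintability, which is weaker than what the lemma asserts.

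For the second claim, your deferral of $v$ itself is fine ($d_{G^2}(v)\le\Delta^2-2<\Delta^2-1$), but the argument then needs $G^2-v$ to be $(\Delta^2-1)$-paintable, and you leave this as a hypothesis to be supplied ``by induction on $|V(G)|$ within the Main Theorem.'' No such induction is available: the Main Theorem uses the present lemma as a black box and does not induct on $|V(G)|$; moreover $G^2-v$ is not the square of any graph, so nothing proved about squares (including the lemma itself, which would be circular) can be invoked for it. The paper proves the second statement unconditionally by the same tree-ordering greedy run on all of $G^2$: order by nonincreasing distance from $v$, ending with $v$ and a suitable neighbor $u$, where the case analysis shows that $d_{G^2}(v)<\Delta^2-1$ forces either a $3$- or $4$-cycle through $v$ or a $G$-degree deficiency, so $u$ can be chosen with $d_{G^2}(u)\le\Delta^2-1$. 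Then every vertex other than $u,v$ has two later $G^2$-neighbors, $u$ has $v$ later and degree at most $\Delta^2-1$, and $v$ has degree at most $\Delta^2-2$, so $\Delta^2-1$ presentations suffice for every vertex. To complete your proposal you would need to add both ingredients: the two-later-neighbors ordering coming from the spanning tree in $G$, and the explicit choice of the penultimate vertex $u$.
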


\begin{proof}
We prove the first statement first.  Let $V=V(G)$ and $V_1=V(H)$.
Clearly a graph is $d_1$-paintable only if each component is.  So we 
assume that $G^2[V_1]$ is connected.  For simplicity, we assume also that
$G[V_1]$ is connected.  If not, then some vertex $v$ has neighbors in two or
more components of $G[V_1]$.  We simply add $v$ to $V_1$, since we can color
$v$ first (when it still has at least two uncolored neighbors).

Form $G'$ from $G$ by contracting $G[V_1]$ to a single vertex $r$.  Let $T$ be
a spanning tree in $G'$ rooted at $r$.  Let $\sigma$ be an ordering of the
vertices of $G\setminus H$ by nonincreasing distance in $T$ from $r$. 
Each time that Lister presents a list of vertices, Painter chooses a maximal
independent subset of them, by greedily adding vertices in order $\sigma$.
Each vertex $v\in V\setminus V_1$ is followed in $\sigma$ by the first two
vertices on a path in $T$ from $v$ to $r$.
Thus $v$ will be colored.  We now combine strategies for $G^2\setminus
H$ and $H$ as in the proof of Lemma~\ref{subgraphlemma}.

Now we prove the second statement, which has a similar proof.  
Suppose there exists $v$ with $d_{G^2}(v)< \Delta^2-1$.  As before we order the
vertices by nonincreasing distance in some spanning tree $T$ from $v$, and we
put $v$ and some neighbor $u$ last in $\sigma$.  The difference now is that
even for $u$ and $v$ we are given $\Delta^2-1$ colors.
Since $d_{G^2}(v)<\Delta^2-1$, either (i) $v$ lies on a 3-cycle
or 4-cycle or else (ii) $d_G(v)<\Delta$ or $v$ has some neighbor $u$
with $d_G(u)<\Delta$; in Case (ii), by symmetry we assume $d_G(v)<\Delta$.
In Case (i), $d_{G^2}(u)\le\Delta^2-1$ for some neighbor $u$ of $v$ on the short
cycle and by assumption $d_{G^2}(v)<\Delta^2-1$; so the two final vertices of
$\sigma$ are $u$ and $v$.  In Case (ii), we again have
$d_{G^2}(v)<\Delta^2-1$ and $d_{G^2}(u)\le \Delta^2-1$, so again $u$ and $v$
are last in $\sigma$.  
\end{proof}

The previous lemma implies that $\Delta^2-1\le d_{G^2}(v)\le \Delta^2$ for every
vertex $v$ in a graph $G$ such that $G^2$ is not $(\Delta^2-1)$-paintable.  A
vertex $v$ is \emph{high} if $d_{G^2}(v)=\Delta^2$, and otherwise it is
\emph{low}.  The proof of Lemma~\ref{subgraphlemmaG^2} proves something
slightly more general, which we record in the following corollary.

\begin{cor}
Let $G$ be a graph with maximum degree $\Delta$ and let $H$ be an induced
subgraph of $G^2$.  Let $f(v)=d(v)-1$ for each high vertex of $G^2$ and
$f(v)=d(v)$ for each low vertex.  If $H$ is $f$-paintable, 
then $G^2$ is $(\Delta^2-1)$-paintable. 
\end{cor}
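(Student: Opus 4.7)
The plan is to mimic the proof of Lemma~\ref{subgraphlemmaG^2}, sharpening the counting by one in the case of low vertices. As before, on each round $k$ Painter simulates two games simultaneously: a game on $G^2 \setminus H$ with list-size $\Delta^2-1$ and a game on $H$ with list-size assignment $f$. Painter first applies the greedy maximal-independent-set strategy to $S_k \setminus V(H)$ using an ordering $\sigma$ of $V(G^2) \setminus V(H)$ (set up by contracting $V(H)$ to a vertex $r$, choosing a spanning tree $T$ of the resulting graph $G'$ rooted at $r$, and ordering by nonincreasing $T$-distance from $r$), producing an independent set $I_k$; he then passes to the $H$-game the set $S'_k \subseteq S_k \cap V(H)$ of vertices having no $G^2$-neighbor in $I_k$. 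If some vertex of $V(G^2) \setminus V(H)$ happens to be low, one can modify $\sigma$ exactly as in the second statement of Lemma~\ref{subgraphlemmaG^2} to place it and a suitable neighbor last.

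The main step is the counting for the simulated $H$-game. A vertex $v \in V(H)$ is omitted from $S'_k$ on round $k$ only when some $G^2$-neighbor of $v$ outside $V(H)$ lies in $I_k$, and each such neighbor can enter an $I_k$ on at most one round. Hence over the at least $\Delta^2-1$ rounds on which Lister must present $v$, the vertex $v$ appears in $S'_k$ on at least $(\Delta^2-1) - (d_{G^2}(v) - d_H(v))$ of them. When $v$ is high this quantity equals $d_H(v)-1 = f(v)$; when $v$ is low it equals $d_H(v) = f(v)$. Either way, the $f$-paintability hypothesis on $H$ guarantees Painter colors $v$, and combining the two simulated wins yields a win on $G^2$ with $\Delta^2-1$ colors.

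There is no real obstacle beyond this two-case counting; the apparent savings for low vertices---each has one fewer $G^2$-neighbor outside $H$ than a comparable high vertex would, yielding one more guaranteed presentation in $S'_k$---precisely match the extra color that $f$ allots them.
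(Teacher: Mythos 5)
Your argument is essentially the paper's own: the authors obtain this corollary by observing that the proof of Lemma~\ref{subgraphlemmaG^2} (contract $V(H)$, order the outside vertices toward the root, play greedily there, and pass the unblocked presentations to the simulated $f$-game on $H$ as in Lemma~\ref{subgraphlemma}) already yields it, and your two-case count $(\Delta^2-1)-(d_{G^2}(v)-d_H(v))\ge f(v)$ is exactly the needed sharpening. Only trivial quibbles: for a low vertex the quantity is ``at least'' $d_H(v)$ rather than equal to it, and your aside about re-ordering $\sigma$ for low vertices outside $H$ is unnecessary, since with $\Delta^2-1$ presentations the rooted greedy order already handles them.
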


Now we will introduce the Alon-Tarsi Theorem, but we need a few definitions
first.  Let $G$ be a graph and let $\vec{D}$ be a digraph arising by orienting
the edges of $G$. 
A \emph{circulation} is a subgraph of $\vec{D}$ in which each vertex has
equal indegree and outdegree; 
circulations are also called eulerian subgraphs.
The parity of a circulation is the parity of its number of edges.
For a digraph $\vec{D}$, let $EE(\vec{D})$ (resp.  $EO(\vec{D})$) denote the 
set of circulations that are even (resp. odd).

\begin{thmA}[Alon and Tarsi \cite{AlonT92}]
For a digraph $\vec{D}$, if $|EE(\vec{D})|\ne |EO(\vec{D})|$, then $\vec{D}$ is
$f$-choosable, where $f(v)=1+d_{\vec{D}}(v)$ for all $v$.
\end{thmA}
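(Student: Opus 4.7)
The plan is to deduce this by the polynomial method, essentially a Combinatorial Nullstellensatz argument applied to the \emph{graph polynomial}
\[
P_G(x_1,\ldots,x_n) \;:=\; \prod_{uv \in E(G)} (x_u - x_v),
\]
where we fix some linear order on $V(G)$ so that each factor has a definite sign. The crucial observation is that for any assignment $c : V(G) \to \mathbb{Z}$, the value $P_G(c)$ is nonzero precisely when $c$ is a proper coloring of $G$. A standard polynomial argument then shows that if $P_G$ contains a top-degree monomial $\prod_v x_v^{a_v}$ with nonzero coefficient, then for every list assignment $L$ with $|L(v)| \geq a_v + 1$ there exist choices $c_v \in L(v)$ with $P_G(c) \neq 0$, producing an $L$-coloring.

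The natural target is the monomial
\[
M \;:=\; \prod_{v \in V(G)} x_v^{d^+_{\vec{D}}(v)},
\]
where $d^+_{\vec{D}}(v)$ denotes the out-degree of $v$ in $\vec{D}$ (this is what $d_{\vec{D}}(v)$ means in the theorem statement); it sits at the top degree since $\sum_v d^+_{\vec{D}}(v) = |E(G)| = \deg P_G$. The whole theorem then reduces to establishing the identity
\[
[M]\,P_G \;=\; \pm\bigl(|EE(\vec{D})| - |EO(\vec{D})|\bigr),
\]
which is nonzero by hypothesis, whereupon the polynomial argument above with $f(v) = 1 + d^+_{\vec{D}}(v)$ finishes the proof.

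To prove this identity I would expand $P_G$ by choosing, at each oriented edge $u \to v$ of $\vec{D}$, either the $x_u$ summand or the $-x_v$ summand. Call an edge \emph{flipped} when its head term was chosen, and let $F$ denote the set of flipped edges in a given selection. The selections that contribute to $M$ are exactly those in which each vertex $v$ is picked $d^+_{\vec{D}}(v)$ times; a local degree-count at each $v$ shows this condition is equivalent to saying that $F$, with the orientations of its edges reversed, forms a subdigraph of $\vec{D}$ with in-degree equal to out-degree at every vertex, i.e.\ a circulation; conversely, every circulation arises from exactly one such selection. Since each selection carries sign $(-1)^{|F|}$, summing over circulations $C$ gives $[M]\,P_G = \sum_{C} (-1)^{|E(C)|} = |EE(\vec{D})| - |EO(\vec{D})|$, up to the global sign determined by the chosen vertex order. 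I expect the main obstacle to be verifying this bijection between contributing selections and eulerian subdigraphs of $\vec{D}$, together with the careful sign bookkeeping; once that is in hand, the theorem follows.
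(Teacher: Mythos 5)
This statement is quoted from Alon and Tarsi~\cite{AlonT92}; the paper itself gives no proof, only the citation (and the remark that the original proof was algebraic). Your sketch is precisely that standard algebraic argument: expand $\prod_{u\to v}(x_u-x_v)$, note that the selections contributing to $\prod_v x_v^{d^+(v)}$ correspond bijectively to circulations with sign $(-1)^{|F|}$, conclude that the coefficient is $\pm\bigl(|EE(\vec{D})|-|EO(\vec{D})|\bigr)$, and finish with the Combinatorial Nullstellensatz, so it is correct and matches the cited source rather than diverging from anything in this paper. One small slip: no reversal of the flipped edges is needed --- the local degree count shows the flipped set $F$ itself satisfies $d_F^+(v)=d_F^-(v)$ at every vertex --- but since that condition is invariant under reversal, your bijection and sign bookkeeping are unaffected.
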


The proof that Alon and Tarsi gave was algebraic and not constructive.  In their
paper, they asked for a combinatorial proof.  This was provided by Schauz
\cite{Schauz-AT}, in the more general setting of paintability.  His proof
relies on an elaborate inductive argument.  The argument does yield a
constructive algorithm, although in general it may run in exponential time.
In \cite{Schauz-nullstellensatz}, Schauz proved an online version of the
combinatorial nullstellensatz from which the paintability version of Alon and
Tarsi's theorem can also be derived.

\begin{thmB}[Schauz \cite{Schauz-AT}]
\label{AT-paint}
For a digraph $\vec{D}$, if $|EE(\vec{D})|\ne |EO(\vec{D})|$, then $\vec{D}$ is
$f$-paintable, where $f(v)=1+d_{\vec{D}}(v)$ for all $v$.
\end{thmB}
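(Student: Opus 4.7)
The plan is to prove Theorem B by induction on $|V(\vec{D})|+|E(\vec{D})|$, with a slightly strengthened inductive hypothesis: Painter wins the online game on any $\vec{D}$ with $|EE(\vec{D})|\ne|EO(\vec{D})|$ whenever each $v$ has quota $f(v)\ge 1+d_{\vec{D}}(v)$. The base case is the empty digraph, for which Painter has nothing to do. In the inductive step, Lister presents a set $S$ of currently uncolored vertices; Painter must respond with an independent set $I\subseteq S$ to receive the next color. After this move the game continues on $\vec{D}-I$ with the quotas of $S\setminus I$ reduced by one, so Painter's task reduces to the following combinatorial claim: for every $S\subseteq V(\vec{D})$ there exist an independent $I\subseteq S$ and a digraph $\vec{D}'$ on $V(\vec{D})\setminus I$, obtained from $\vec{D}$ by deleting $I$ and reversing some edges, such that each $v\in S\setminus I$ has out-degree one smaller in $\vec{D}'$ than in $\vec{D}$, no vertex outside $S\cup I$ gains out-degree, and $|EE(\vec{D}')|\ne|EO(\vec{D}')|$.

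To prove this claim I would induct on $|S|$. Pick $v\in S$ and an out-edge $vw$; compare $|EE|-|EO|$ in the digraphs obtained from $\vec{D}$ by deleting $vw$, by reversing $vw$, and by deleting $v$ together with a prescribed reversal of the remaining edges incident to $v$. Partitioning circulations according to whether and how they use edges at $v$ yields a linear identity among the resulting Alon--Tarsi differences; since $|EE(\vec{D})|\ne|EO(\vec{D})|$ by hypothesis, at least one of the derived digraphs must also have nonzero Alon--Tarsi invariant. That choice dictates Painter's action: either add $v$ to $I$ and recurse on $S\setminus\{v\}$ in the deletion digraph, or leave $v$ uncolored with out-degree reduced by one and recurse in the reoriented digraph. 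Independence of $I$ is automatic, because once $v$ enters $I$ the recursion proceeds on a digraph from which $v$ has already been deleted.

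The main obstacle is threading the bookkeeping of reversals. Deleting $v$ from $\vec{D}$ would naively drop the out-degree of each in-neighbor of $v$, which is forbidden by the invariant unless those in-edges are first reversed; but reversing an in-edge $uv$ raises the out-degree of $v$ and must be paired with the deletion of $v$ or with a compensating reversal elsewhere. Schauz's resolution, which I would follow, is to delete each $v\in I$ together with a carefully prescribed reorientation of the edges at $v$ so that no surviving out-degree increases, and then to show via a delicate refinement of the parity count on circulations that $|EE|\ne|EO|$ is preserved through every such step. Synchronizing these reorientations across all $v\in S$ while keeping $I$ independent and the Alon--Tarsi invariant alive is the technically elaborate core of the proof, and is what makes the constructive algorithm extracted from the induction run in exponential time in the worst case.
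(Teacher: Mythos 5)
There is a genuine gap here, and it is worth noting first that the paper itself does not prove Theorem~B at all: it is quoted as a black-box result of Schauz \cite{Schauz-AT} (with an alternative derivation from Schauz's online combinatorial nullstellensatz \cite{Schauz-nullstellensatz}), so there is no in-paper argument for your sketch to match. Judged on its own, your proposal reduces the theorem to a per-round claim --- for every presented set $S$ there is an independent $I\subseteq S$ and a modified digraph $\vec{D}'$ on $V\setminus I$ with every $v\in S\setminus I$ losing one unit of out-degree, no other vertex gaining out-degree, and $|EE(\vec{D}')|\ne|EO(\vec{D}')|$ --- but that claim \emph{is} the entire content of the theorem, and you do not prove it. The ``linear identity among the resulting Alon--Tarsi differences'' obtained by classifying circulations according to their behavior at a chosen vertex or edge is never written down, the family of derived digraphs (delete $vw$, reverse $vw$, delete $v$ with a prescribed reorientation) is not specified precisely enough to check that nonvanishing of $\diff(\vec{D})$ forces nonvanishing for at least one admissible alternative, and nothing guarantees that the alternative singled out by the identity is compatible with the out-degree constraints: reversing an edge into $v$ raises $d^+(v)$, which may already be at its budget, and deleting a vertex of $I$ lowers the out-degree of its out-neighbors but does nothing for its in-neighbors in $S\setminus I$.

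You acknowledge this yourself in the final paragraph, where the synchronization of reorientations across all of $S$ while keeping $I$ independent and the parity count nonzero is described as ``Schauz's resolution, which I would follow.'' That is precisely the elaborate inductive argument the paper is citing; deferring it means the proposal is a plan for a proof rather than a proof. To make this rigorous you would either have to carry out Schauz's circulation-level induction in detail, or switch to the cleaner algebraic route: identify $|EE(\vec{D})|-|EO(\vec{D})|$ (up to sign) with the coefficient of $\prod_v x_v^{d^+_{\vec{D}}(v)}$ in the graph polynomial $\prod_{uv\in E}(x_u-x_v)$ and then invoke, or prove, the paintability version of the combinatorial nullstellensatz, whose induction is on one variable at a time and avoids the edge-reversal bookkeeping that blocks your sketch.
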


Our main result relies heavily on forbidding $d_1$-paintable subgraphs. 
For many of the smaller $d_1$-paintable graphs that we need, we give direct
proofs.  However, for some of the larger $d_1$-paintable graphs, particularly
the classes of unbounded size, our proofs of $d_1$-paintability use Theorem
B.

\section{Proof of Main Theorem}
\label{mainproof}

In this section we prove our main result, subject to a number of lemmas on
forbidden subgraphs, which we defer to the next section.  We typically prove
that a subgraph is forbidden by showing that it is $d_1$-paintable.
If a copy of a subgraph $H$ in $G^2$ contains low vertices, then this
configuration is reducible as long as $H$ is $f$-paintable, where
$f(v)=d_{H}(v)-1$ for each high vertex $v$ and $f(w)=d_{H}(w)$ for each low
vertex $w$.
For many of the graphs, we give an explicit winning strategy for Painter.  In
contrast, for some of the graphs, particularly those of unbounded size,  we
don't give explicit winning strategies.  Instead, we show that they are
$d_1$-paintable via Schauz's extension of the Alon-Tarsi Theorem
(\hyperref[AT-paint]{Theorem B}).

\begin{mainthm}
If $G$ is a connected graph with maximum degree $\Delta\ge 3$ and
$G$ is not the Peterson graph, the Hoffman-Singleton graph, or a Moore graph
with $\Delta=57$, then $\chi_p(G^2)\le \Delta^2-1$. 
\end{mainthm}

\begin{proof}
Let $G$ be a connected graph with maximum degree $\Delta\ge 3$, other than the
graphs excluded in the Main Theorem.  Assume that
$G^2$ is not $(\Delta^2-1)$-paintable.  By Lemma~\ref{subgraphlemmaG^2},
if there exists $v\in V(G)$ with $d_{G^2}(v)<\Delta^2-1$, then $G^2$ is
$(\Delta^2-1)$-paintable.  So $G$ is $\Delta$-regular and has
girth at least 4.  Further, no vertex of $G$ lies on two or more 4-cycles.
It will be helpful in what follows to show that $\omega(G^2)\le \Delta^2-1$.

Clearly $\Delta(G^2)\le \Delta^2$.
Further, $\omega(G^2)=\Delta^2+1$ only if $G^2=K_{\Delta^2+1}$.  
Hoffman and Singleton \cite{HoffmanS60} showed this is possible only if
$\Delta\in \{2,3,7,57\}$; such a graph $G$ is called a Moore graph.  When
$\Delta\in\{2,3,7\}$, the unique realizations
are the 5-cycle, the Peterson graph, and the Hoffman-Singleton graph.  When
$\Delta=57$, no realization is known.  These are precisely the graphs excluded
from the theorem.
Now we consider the case
$\omega(G^2)=\Delta^2$.  Erd\H{o}s, Fajtlowicz, and Hoffman~\cite{ErdosFH80}
showed that the only graph $H$ such that $H^2=K_{\Delta(H)^2}$ is $C_4$. 
Cranston and Kim noted that if $H^2$ is not a clique on at least
$\Delta^2$ vertices, then in fact $\omega(H^2)\le \Delta^2-1$.
For completeness, we reproduce the details.  

Suppose that $\omega(G^2)=\Delta^2$, and let $U$ be the vertices of a maximum
clique in $G^2$.  The result of Erd\H{o}s, Fajtlowicz, and Hoffman implies that
$U$ is not all of $V$.  Choose $v,w\in V$ with $v\in
U$, $w\notin U$ and $v$ adjacent to $w$.  Since $d_{G^2}(v)=\Delta^2$
and $w\notin U$, every neighbor of $w$ must be in $U$.  Applying the
same logic to these neighbors, every vertex within distance 2 of $w$
must be in $U$.  But now we can add $w$ to $U$ to get a larger clique in
$G^2$.  This contradiction implies that in fact $\omega(G^2)\le \Delta^2-1$.

Two vertices are \emph{linked} if they are adjacent in $G^2$, and otherwise they
are \emph{unlinked}.  When we write that vertices are adjacent or nonadjacent,
we mean in $G$; otherwise we write linked or unlinked.
We write $v \adj w$ if $v$ and $w$ are adjacent, and $v\nonadj w$ otherwise.

{\bf Case 1: $G$ has girth 4}

Let $C$ be a 4-cycle with vertices $v_1,\ldots,v_4$, and let $\C=V(C)$.
It is helpful to note that every
$v_i$ is low.  We need two lemmas.  These were first proved in
\cite{CranstonR13+big} for list
coloring, and we generalize them to online list coloring in
Lemmas~\ref{K3vE2} and~\ref{K4vE2}. 
The following two configurations in $G^2$ are reducible:
(A) $K_4\join \overline{K_2}$ where some vertex $w\in V(K_4)$ is low and
(B) $K_3\join \overline{K_2}$ where some vertices $w\in V(K_3)$ and $x\in V(\overline{K_2})$ are both low.

Note that $G^2[\C]\cong K_4$.
This implies that every $w$ adjacent to some $v_i\in \C$ must be linked to all
of $\C$.  Suppose not, and let $w$ be adjacent to $v_1$ and not linked to $v_3$. 
Now $G^2[\C\cup\{w\}] \cong K_3\join \overline{K_2}$, and every $v_i$
is low; this is (B), which is forbidden.  
Now suppose that $w_1$ and $w_2$ are vertices adjacent to $v_i$ and
$v_j$, respectively.  We must have $w_1$ linked to $w_2$, since otherwise
$G^2[\C\cup\{w_1,w_2\}]$ is (A), which is forbidden.  

Now let $x$ be a vertex at distance 2 from $v_1$ and not adjacent to any
$v_i$; let $w_1$ be a common neighbor of $v_1$ and $x$.  Since $w_1$ is linked
to $v_3$, they have a common neighbor $w_3$.  Now $x$ is linked to $v_1$,
$w_1$, and $w_3$.  To avoid configuration (B), $x$ must be linked to all
of $\C$.  Thus, all vertices within distance 2 of $v_1$ must be linked to all
of $\C$.  Now every pair of vertices $x$ and $y$ that are both within
distance 2 of $v_1$ must be linked; otherwise $G^2[\C\cup\{x,y\}]$
is (A).  So the vertices within distance 2 of $v_1$ induce in $G^2$ a clique of
size $\Delta^2$, which contradicts that $\omega(G^2)\le \Delta^2-1$.

{\bf Case 2: $G$ has girth at least 5}

Let $g$ denote the girth of $G$.
First suppose that $g=6$, and let $U$ be the vertices of a 6-cycle.
Note that $G^2[U]=C_6^2$, since girth 6 implies there are no extra edges. 
Since $C_6^2$ is $d_1$-paintable, by Lemma~\ref{C6}, we are done by Lemma~\ref{subgraphlemmaG^2}.

Suppose $g = 7$.
Let $U$ denote the vertices of some $7$-cycle in $G$, with a pendant edge at
a single vertex of the cycle.  Because $G$ has girth 7,
$G^2[U]$ has only the edges guaranteed by its definition.  We show in Lemma~\ref{cycle+pendant} that $G^2[U]$ is
$d_1$-paintable.  So again, we are done by Lemma~\ref{subgraphlemmaG^2}.

Suppose instead that $g\ge 8$.
Let $U = \{v_1, \ldots, v_g, w_1, w_5\}$ be the vertices of some $g$-cycle in
$G$ together with pendant edges $v_1w_1$ and $v_5w_5$.  If $g \ge 9$, then $G^2[U]$ has only the edges guaranteed by its definition.  
If $g = 8$, then $G^2[U]$ has the edges guaranteed by its definition as well as possibly the extra edge $w_1w_5$.
For each girth $g$ at least 8, we show in Lemma~\ref{cycle+2pendant} and Lemma~\ref{cycle+2pendant+edge} that $G^2[U]$ is
$d_1$-paintable.  So again, we are done by Lemma~\ref{subgraphlemmaG^2}.

Now we consider girth 5.  Our approach is similar to that for girth 4, but we
must work harder since we don't necessarily have any low vertices.
Let $C$ be a 5-cycle with vertices $v_1,\ldots,v_5$.  Let $k=\Delta-2$.  For
each $i$, let $V_i$ denote the neighbors of $v_i$ not on $C$.  Let
$\C=V(C)$ and let $\D=\cup_{i=1}^5V_i$.  
Each vertex of $\D$ is linked to either 5, 4, or 3 vertices of $\C$.  We call
these $B_0$-vertices, $B_1$-vertices, and $B_2$-vertices, respectively (a
$B_i$-vertex is unlinked to $i$ vertices of $\C$).  We will consider
four possibilities for the number and location of each type of vertex.  In
each case we find a $d_1$-paintable subgraph.
Let $L$ denote the subgraph $G[\D]$.  Since $G$ has girth 5, we have
$\Delta(L)\le 2$.  Each vertex $w$ with $d_L(w)=2-i$ is a $B_i$-vertex (for
$i\in \{0,1,2\}$).  

Suppose that $G$ has two $B_1$-vertices $w_1$ and $w_2$ and they are unlinked
with distinct vertices in $\C$.  Let $H=G^2[\C\cup\{w_1,w_2\}]$.  If $w_1$ and
$w_2$ are linked, then $H=K_3\join C_4 \supset K_2\join C_4$, which is
$d_1$-paintable, by Lemma~\ref{K2vC4}. If instead $w_1$ and $w_2$ are unlinked,
then $H=K_3\join P_4$, which is also $d_1$-paintable, by Lemma~\ref{K3vP4}.  
So we assume that all $B_1$-vertices are unlinked with the same vertex
$v\in\C$.
As a result, each $B_1$-vertex is an endpoint of a path of length $3 \pmod
5$ in $L$, for otherwise the two endpoints of the path are unlinked with
different vertices in $\C$.  Since the number of odd degree vertices in any
graph is even, here the number of $B_1$-vertices is even. 

{\bf Case 2.1: $G$ has a $B_1$-vertex $w_1$ and a $B_2$-vertex $w_2$.} 

Let $H=G^2[\C\cup\{w_1,w_2\}]$.
Suppose the four vertices of $\C$ linked to $w_1$ include the three vertices
of $\C$ linked to $w_2$.  
If $w_1$ and $w_2$ are linked, then $H=K_3\join P_4$, and if $w_1$ and $w_2$
are unlinked, then $H=K_3\join (K_1+P_3)$.  In each case, $H$ is
$d_1$-paintable, by Lemmas~\ref{K3vP4} and~\ref{K3vK1+P3}, respectively.

Suppose instead that the four vertices of $C$ linked to $w_1$ do not
include all three vertices of $C$ linked to $w_2$.  If $w_1$ is linked with
$w_2$, then $H\supset K_2\join C_4$, which is $d_1$-paintable by
Lemma~\ref{K2vC4}.
If $w_1$ is unlinked with $w_2$, then $H$ is again $d_1$-paintable, by
Lemma~\ref{B1B2}.
Thus, $G^2$ cannot contain both $B_1$-vertices and $B_2$-vertices.  

{\bf Case 2.2: $G$ has no $B_1$-vertices, but only some $B_2$-vertices, and
possibly also $B_0$-vertices.}

Now $L$ consists of disjoint
cycles, each with length a multiple of 5.  This implies that each $V_i$
contains the same number of $B_2$-vertices; by assumption this number is at
least 1.  We call a pair of $B_2$ vertices with distinct cycle neighbors
\emph{near} if their cycle neighbors are adjacent and \emph{far} if their cycle
neighbors are nonadjacent.
If any pair of far $B_2$-vertices are linked, then $G$ has a $d_1$-paintable
subgraph, by Lemma~\ref{farlinked}.  If any pair of near $B_2$-vertices are
linked, then, together with their adjacent cycle vertices, they induce
$K_2\join C_4$, which is $d_1$-paintable by Lemma~\ref{K2vC4}.
Thus, we consider the subgraph induced by $\C$ and 
3 non-successive $B_2$-vertices, say with cycle neighbors
$v_1, v_2, v_4$.  Each such subgraph is $d_1$-paintable, by
Lemma~\ref{3unlinked}.  Combining this with Case 2.1, we conclude that
$G$ contains no $B_2$-vertices.

{\bf Case 2.3: $G$ has $B_1$-vertices and possibly $B_0$-vertices.}

Recall that $G$ has an even number of $B_1$-vertices and they are all unlinked
with the same vertex.  By symmetry, assume that $G$ has $B_1$-vertices $w_2\in
V_2$ and $w_3\in V_3$ and they are both unlinked with $v_5$.  We will find two
disjoint pairs of nonadjacent vertices, such that all four are linked with
$\C-v_5$.

Since $w_3$ is a $B_1$-vertex, it is the endpoint of some path in $L$; let
$w_1\in V_1$ be the neighbor of $w_3$ on this path.  We will show that $w_1$ is
unlinked with some vertex in $\D$.

Recall that $|\D|=5k$.  Suppose that $w_1$ is linked to each vertex of $\D$. 
Since $d_L(w_1)=2$ and $d_L(w_3)=1$, at most 3 of these $5k-1$ vertices linked
with $w_1$ can be reached from $w_1$ by following edges in $L$.  Clearly $w_1$
is linked to the
other $k-1$ vertices of $V_1$.  Now for each vertex $w$ of the remaining
$(5k-1)-3-(k-1)=4k-3$ vertices in $\D$, $w_1$ must have a common neighbor $x$
with $w$ and $x\notin \D\cup \C$.  Furthermore, each such common neighbor $x$
can link $u$ to at most 4 of these vertices (at most one in each other $V_i$,
since the girth is 5).  However, this requires at least $\Ceil{\frac{4k-3}4}=k$
additional neighbors of $w_1$, but we have already accounted for 3 neighbors of
$w_1$.  Thus, $w_1$ is unlinked with some vertex $y\in \D$.  

Let $z$ be a $B_1$ vertex distinct from $y$.  Now $z$ and $v_5$
are unlinked and $w_1$ and $y$ are unlinked.  But every vertex of
$\{w_1,v_5,y,z\}$ is linked to $\C-v_5$.  Thus
$G^2[(\C-v_5)\cup\{w_1,v_5,y,z\}]=K_4\join H$, where $H$ contains disjoint
pairs of nonadjacent vertices.  So $K_4\join H$ is $d_1$-paintable, by 
Lemma~\ref{K4v2E2}.

{\bf Case 2.4: $\D$ has only $B_0$-vertices.} 

Let $H=G^2[\C\cup \D]$.  
We will show that if $H$ is not a clique, then we can choose a different
5-cycle and be in an earlier case.  Suppose that $H$ is not a clique.
Since $\D$ is linked to $\C$ and $G^2[\C]=K_5$, we must have $w_1,w_2\in \D$
with $w_1$ and $w_2$ unlinked.  By symmetry, we have only two cases.

First suppose that $w_1 \in V_1$ and $w_2 \in V_2$ and $w_1$ and $w_2$
are unlinked.  Since $w_1$ is a $B_0$-vertex, we have $w_3 \in V_3$ with
$w_1 \adj w_3$.  Consider the 5-cycle $w_1v_1v_2v_3w_3$.  Now $w_2$ is
not linked to $w_1$, which makes $w_2$ not a $B_0$-vertex for that 5-cycle.
So we are in Case 2.1, 2.2, or 2.3 above.
Now suppose instead that $w_1 \in V_1$ and $w_3 \in V_3$ and $w_1$ and $w_3$
are unlinked.  Now we pick some $w'_3 \in V_3$ with $w_1\adj w'_3$ and consider
the 5-cycle $w_1v_1v_2v_3w'_3$.  Since $w_3$ and $w_1$ are unlinked, $w_3$ is
not a $B_0$-vertex for this 5-cycle, so we are in Case 2.1, 2.2, or 2.3 above.
Hence $G^2[\C \cup \D]$ must be a clique.  

To link all vertices in $\D$, we must have $k(k-1)$ additional vertices in $G$,
at distance 2 from $\C$; call the set of them $\F$.  We see that $|\F|\ge
k(k-1)$ as follows.  
All $5k \choose 2$ pairs of vertices in $\D$ are linked.  The $5{k\choose 2}$
pairs contained within a common $V_i$ are linked via vertices of $\C$.  Each of
the $5k$ vertices is linked with exactly 4 vertices via edges of $L$.  The
remaining links all must be due to vertices of $\F$, and each vertex of $\F$
can link at most ${5 \choose2}=10$ pairs of vertices in $\D$ (at most one
vertex in each $V_i$, since $G$ has girth 5).  Thus $|\F| \ge ({5k\choose 2} -
5{k\choose 2} - 5k(4)/2)/{5 \choose 2} = k(k-1)$.
If any vertex $x\in \F$ has fewer than exactly one neighbor in each $V_i$, then
some pair of vertices in $\D$ will be unlinked.  Thus, each $x\in \F$ has
exactly one neighbor in each $V_i$.  This implies that $\F$ is linked to $\C$,
and hence that $|\F|=k(k-1)$.  We will show that every pair of vertices in $\C
\cup \D \cup \F$ is linked.

Suppose there exists $w\in \D$ and $x\in \F$ with $w$ and $x$ unlinked.
By symmetry, we assume $w\in V_1$.  There exist $w_1\in V_1$ and $w_2\in V_2$
with $x\adj w_1$ and $x\adj w_2$.  Now consider the 5-cycle $xw_1v_1v_2w_2$.
Since $w$ and $x$ are unlinked, $w$ is not a $B_0$-vertex for that 5-cycle.
This puts us in Case 2.1, 2.2., or 2.3 above.  So $\F$ must be linked to $\D$.

Finally suppose there exist $x_1,x_2\in \F$ with $x_1$ and $x_2$ unlinked.
Now there exist $w_1, w_2 \in V_1$ with $x_1\adj w_1$ and $x_2\adj w_2$.
Since $G$ has girth 5, we have $x_1\nonadj w_2$.  And since $x_1$ is linked with
$w_2$, they have some common neighbor $y \in \D\cup \F$.  Now consider the
5-cycle $x_1w_1v_1w_2y$.  Since $x_1$ and $x_2$ are unlinked, $x_2$ is not a
$B_0$-vertex for this 5-cycle.  Hence, we are in Case 2.1, 2.2, or 2.3.

Thus, all vertices of $\C \cup \D \cup \F$ are pairwise linked.  Now $|\C \cup
\D \cup \F| = 5+5k+k(k-1)=k^2+4k+5=(k+2)^2+1=\Delta^2+1$.  This contradicts that
$\omega(G^2)\le \Delta^2-1$ and completes the proof.
\end{proof}

We note that many of the cases of the above proof actually prove that $G^2$ is
$d_1$-paintable, and hence has paint number at most $\Delta(G^2)-1$.  In
particular, this is true when $G$ has girth 6, 7, or at least 9.  Probably with
more work, we could also adapt the proof to the case when $G$ has girth 8.  The
Conjecture that $G^2$ is $(\Delta(G^2)-1)$-paintable unless
$\omega(G^2)\ge\Delta(G^2)$ is a special case of Conjecture~\ref{BKpaint}.
The main obstacle to proving this stronger result is the case when $G$ has girth
at most 5, particularly girth 3 or girth 4.

\section{Proofs of forbidden subgraph lemmas}
\label{lemmas}

In what follows, we slightly abuse the terminology of high and low vertices
defined earlier.  Now a vertex is 
\emph{high} if its list size is one less than its degree and
\emph{low} if its list size equals its degree.  
Note that if a vertex $v$ is high (resp. low) in $G$ by our old definition,
then it will be high (resp. low) in each induced subgraph $H$ by our new
definition.
A vertex is \emph{very low} if
its list size is greater than its degree.  When a vertex $v$ in a graph $G$ is
very low, we may say that we \emph{delete} $v$. If $G-v$ is paintable from
its lists, then so is $G$.  On each round, we play the game on $G-v$ and
consider $v$ after all other vertices, coloring it only if its list contained
the color for that round and we have colored none of its neighbors on that
round.
Recall that $S_k$ denotes the vertices with lists containing color $k$.
We write $E_k$ for the empty graph on
$k$ vertices, i.e., $E_k = \overline{K_k}$.
In what follows, all vertices not specified to be low are assumed to be high.

\subsection{Direct proofs}\label{DirectProofs}
For pictures of the graphs in Lemmas~\ref{K4-e} through~\ref{K3vK1+P3}, see
Figures \ref{indirect-fig1} and \ref{indirect-fig2} in Section
\ref{AT-section}.
\begin{lemma}
\label{K4-e}
If $G$ is $K_4-e$ with one degree 3 vertex high and the other vertices low,
then $G$ is $f$-paintable.
\end{lemma}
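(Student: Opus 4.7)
The plan is to give an explicit winning strategy for Painter. Label the vertices so that $a,b$ are the degree-$3$ vertices (with $a$ the high one, so $f(a)=2$ and $f(b)=3$), and $c,d$ are the degree-$2$ vertices, so the missing edge is $cd$ and $f(c)=f(d)=2$.

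On round $k$, letting $T_k$ be the unpainted vertices in $S_k$, Painter plays as follows: (i) if $\{c,d\}\subseteq T_k$, paint $\{c,d\}$; otherwise (ii) if $a\in T_k$, paint $\{a\}$; otherwise (iii) if $T_k$ equals $\{b,c\}$ or $\{b,d\}$, paint $\{b\}$; otherwise (iv) paint any single vertex of $T_k$, preferring a degree-$2$ vertex over $b$. To verify this wins, I enumerate the $15$ non-empty options for $S_1$ and check that the resulting position is a winnable subgame on an induced subgraph of $K_4-e$: either $K_2$, $P_3$, or a triangle, with residual list sizes for which Painter wins. Each such small subgame is paintable by a routine recursive case check.

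The main obstacle is that two specific positions are \emph{not} winning for Painter, and the strategy must steer clear of them. The first is $K_3$ with all three list sizes equal to $2$: Painter's only legal move is to paint a single vertex, leaving a $K_2$ with both list sizes $1$, which Lister then wins by listing both vertices. The second is $P_3$ with list sizes $(1,2,1)$ read leaf--middle--leaf: Lister lists the middle together with either leaf, and Painter has no safe response. Rule~(i) of the strategy is what dodges the bad $P_3$---for instance on $S_1=\{a,b,c,d\}$, the greedy move of painting $\{a\}$ would leave the path on $c,b,d$ with list sizes $(1,2,1)$. Rule~(iii) dodges the bad $K_3$---for instance on $S_1=\{b,c\}$, painting $\{c\}$ would leave the triangle on $\{a,b,d\}$ with list sizes $(2,2,2)$. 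Once these two traps are avoided in round~$1$, the strategy applied recursively to subsequent rounds handles each of the smaller subgames by the same sort of case analysis.
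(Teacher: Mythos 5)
Your overall plan---an explicit Painter strategy plus a finite case check---is the same kind of argument the paper gives, and your round-one analysis is correct: all fifteen choices of $S_1$ lead, under your rules, to Painter-winnable residual positions, and you rightly isolate the two losing positions ($K_3$ with all lists $2$, and $P_3$ with lists $1,2,1$) that rules (i) and (iii) are built to dodge. The gap is in the final step, where you claim that ``the strategy applied recursively to subsequent rounds handles each of the smaller subgames.'' Taken literally, your rules lose in reachable continuations, because (ii) and (iii) ignore urgency: a presented vertex whose residual list size is $1$ must be painted on that very round. Concretely, let Lister play $S_1=\{a,b,c\}$; rule (ii) paints $a$, leaving the path $c\,b\,d$ with residual lists $c{:}1$, $b{:}2$, $d{:}2$. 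If Lister now presents $\{b,c\}$, rule (iii) instructs Painter to paint $b$; then $c$ has been presented $f(c)=2$ times without being painted, so Lister wins (Painter had to paint $c$). Similarly, after $S_1=\{b,c\}$ (rule (iii) paints $b$) the position is the path $c\,a\,d$ with lists $c{:}1$, $a{:}2$, $d{:}2$; on $S_2=\{a,c\}$ rule (ii) paints $a$ and again abandons $c$. So the displayed rules do not constitute a winning strategy as stated.

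The lemma itself is fine and the repair is small: either add a top-priority rule that Painter always paints (an independent set containing) any presented vertex with only one token remaining, or drop the claim that the same four rules recurse and instead verify directly that each residual position reached after round one ($K_2$ with lists at least $(1,2)$, the paths that avoid the $(1,2,1)$ pattern, and the triangle with lists $(2,3,2)$) is Painter-winnable---which they all are. With that fix your argument essentially coincides with the paper's proof, which also paints both degree-$2$ vertices when they are presented together and otherwise paints a degree-$3$ vertex, preferring the high one, finishing by greedy arguments on low and very low vertices.
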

\begin{proof}
Let $v_1, v_2$ denote the degree 3 vertices, with $v_1$ low, and let $w_1, w_2$
denote the degree 2 vertices.  If $w_1,w_2\in S_1$, then color them both with 1.
Now the remaining vertices are low and very low, so we can finish.  Otherwise,
color some $v_i$ with 1, choosing $v_2$ if possible.  Now at least one $w_j$
becomes very low and the uncolored $v_k$ is low, so we can finish.
\end{proof}

\begin{lemma}
\label{K3vE2}
If $G$ is $K_3 \join E_2$ with a low vertex in the $K_3$ and a low vertex
in the $E_2$, then $G$ is $f$-paintable.
\end{lemma}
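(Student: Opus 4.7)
Label the $K_3$ as $\{u_1,u_2,u_3\}$ with $u_1$ low and the $E_2$ as $\{v_1,v_2\}$ with $v_1$ low, so $f(u_1)=4$, $f(u_2)=f(u_3)=3$, $f(v_1)=3$, $f(v_2)=2$. The only non-adjacent pair of $G$ is $\{v_1,v_2\}$, so on any round Painter can color two vertices only by painting that pair; also $G-v_j\cong K_4$ and $G-u_i\cong K_4-v_1v_2$ for every $i,j$. Throughout, I will use the standard criterion that $K_n$ is $L$-paintable whenever its sorted list sizes $\ell_1\ge\cdots\ge\ell_n$ satisfy $\ell_i\ge n-i+1$ for every $i$.

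My round-$1$ rule, taking the first applicable clause: (a) if $\{v_1,v_2\}\subseteq S_1$, paint both; (b) if $u_1\in S_1$, paint some $u_i\in S_1$ with $i\ne 1$ when one exists, else paint $u_1$; (c) if $v_2\in S_1$, paint $v_2$; (d) if $v_1\in S_1$ and $\{u_2,u_3\}\not\subseteq S_1$, paint $v_1$; (d$'$) if $v_1\in S_1$ and $\{u_2,u_3\}\subseteq S_1$, paint $u_2$; (e) otherwise paint any $u_i\in S_1$. The residual is $K_3$ in case (a) with sorted lists at least $(3,2,2)$; it is $K_4$ in cases (c) and (d) with sorted lists at least $(4,3,2,2)$; and it is $K_4-v_1v_2$ in cases (b), (d$'$), and (e). By the $K_n$-criterion, the $K_3$ and $K_4$ residuals are paintable, and in the $K_4-v_1v_2$ residuals the pair $\{v_1,v_2\}$ remains non-adjacent, so Painter may paint them together on a later round.

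The main work is to verify the $K_4-v_1v_2$ residuals. The delicate sub-case is $S_1=\{v_2,u_1,u_2,u_3\}$ with $v_1\notin S_1$, for which both naive moves fail: painting $v_2$ leaves $K_4$ with sorted lists $(3,3,2,2)$, which violates the criterion, and painting $u_1$ leaves $K_4-v_1v_2$ with lists $(u_2,u_3,v_1,v_2)=(2,2,3,1)$, which is also not paintable. Painter must instead paint $u_2$ (or $u_3$), yielding $K_4-v_1v_2$ with lists $(u_1,u_3,v_1,v_2)=(3,2,3,1)$. To paint this residual I would use a round-$2$ rule: paint $\{v_1,v_2\}$ whenever both are in $S_2$; else paint whichever $u$-vertex's removal leaves a $K_3$ or $P_3$ satisfying the paintability threshold. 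The trickiest step is when Lister plays $\{u_1,u_3,v_1\}$: Painter must paint $u_3$ (not $u_1$), leaving $P_3=v_1u_1v_2$ with lists $(2,2,1)$, which is paintable by a short endpoint--centre analysis. A parallel and shorter argument handles the residual $K_4-v_1v_2$ with lists $(4,2,2,2)$ coming from (d$'$), and the remaining residuals in case (b) are all strictly easier by the same scheme.
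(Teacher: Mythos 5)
Your round-1 rule is sound and your clique criterion is legitimate for paintability (the vertex of largest list is very low, so it can be deleted and one inducts), and in fact every residual your rule reaches is paintable; the gap is in your verification of the $K_4-v_1v_2$ residuals. In the residual $(u_1,u_3,v_1,v_2)=(3,2,3,1)$ (and likewise in $(3,3,3,1)$, which also arises from case (b)), the vertex $v_2$ has list size $1$, so whenever Lister presents $v_2$ without $v_1$, Painter must paint $v_2$ on that very round: every other presented vertex is a neighbour of $v_2$, so painting anything else (or nothing) leaves $v_2$ presented, unpainted, and out of tokens. Your round-2 rule never does this---it paints $\{v_1,v_2\}$ only when both appear and otherwise paints a $u$-vertex---so against $S_2=\{u_1,u_3,v_2\}$, or $\{u_1,v_2\}$, or $\{v_2\}$, it either loses outright or prescribes no move at all (no $u$-vertex's removal leaves an admissible $K_3$ or $P_3$). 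The winning move, painting $v_2$, leaves the triangle $u_1u_3v_1$ with lists $(2,1,3)$, which your criterion handles; better still, the whole residual needs no second-round analysis, because $v_1$ is very low (list $3$, degree $2$): delete it and the remaining triangle has lists $(3,2,1)$. So the case you single out as delicate is actually the trivial one.

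Conversely, the residuals you wave off as ``strictly easier by the same scheme'' contain the only genuinely nontrivial ones: $(3,2,2,2)$ (from $S_1=\{u_1,u_2,u_3,v_1\}$) and the all-low $(3,3,2,2)$, where no vertex is very low and no deletion-plus-clique shortcut applies; the first is exactly the configuration the paper isolates and proves separately as Lemma~\ref{K4-e}. These do succumb to a correctly stated second-round rule (your scheme works there once a clause ``if $v_2\in S_2$ but $v_1\notin S_2$, paint the presented $v$-vertex'' is added and the cases are checked), but as written you assert rather than prove them. A minor further point: ``violates the criterion'' does not by itself show a residual is unpaintable, since the criterion is only sufficient---in the two places you use it the residuals are indeed not even choosable, and in any case you never need to show that alternative moves fail, only that your chosen move works. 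In summary, your skeleton (explicit first-round case analysis, clique criterion, very-low deletion) is close in spirit to the paper's direct strategy and can be completed, but as written the proof mislocates the work and its stated strategy for the list-size-$1$ residuals loses to an explicit Lister line.
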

\begin{proof}
Denote the vertices of the $K_3$ by $v_1, v_2, v_3$, with $v_1$ low, and the
vertices of $E_2$ by $w_1, w_2$, with $w_1$ low.
If $w_1,w_2\in S_1$, then color them both 1.  Now $v_1$
becomes very low and $v_2$ and $v_3$ each become low,
so we finish greedily, ending with $v_2$ and $v_1$.
Suppose $w_2 \in S_1$.  If $v_2\in S_1$ 
(or $v_3\in S_1$, by symmetry), then color $v_2$ with 1.  Now $w_1$ becomes
very low (since $S_1 \not\supseteq \{w_1,w_2\}$), and $v_1$ remains
low, so we can finish greedily.  If instead $v_1\in S_1$ and
$v_2, v_3\notin S_1$, then color $v_1$ with 1.  Again 
$w_1$ becomes very low and $v_2$ and $v_3$ become low, so we can finish
greedily.  The situation is similar if $S_1$
contains only a single $w_i$.  Thus, $w_2\notin S_1$.  
Since $S_1\ne \{w_1\}$, some $v_i$ is in $S_1$.  Use color 1 on $v_i$, choosing
$v_2$ or $v_3$ if possible.  What remains is $K_4-e$ with one degree 3 vertex
high and all others low (or very low).  So we finish by Lemma~\ref{K4-e}.
\end{proof}

\begin{lemma}
\label{K4vE2}
If $G$ is $K_4 \join E_2$ with a low vertex in the $K_4$, then $G$ is
$f$-paintable.
\end{lemma}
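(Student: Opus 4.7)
The plan is to prescribe Painter's first-round response by cases on the set $S_1$ that Lister presents, and in each case reduce either to iterated deletion of very low vertices or to Lemma~\ref{K3vE2}. Label the vertices of the $K_4$ as $v_1,\ldots,v_4$ with $v_1$ the prescribed low vertex, and the vertices of the $E_2$ as $w_1,w_2$. Then $v_1$ has list size $5$, each of $v_2,v_3,v_4$ has list size $4$, and each $w_j$ has list size $3$. The only nontrivial independent set in $G$ is $\{w_1,w_2\}$, since every $v_i$ is adjacent to both $w_j$'s.

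The main split is on $|S_1\cap\{w_1,w_2\}|$. If both $w_j$'s lie in $S_1$, paint them both with color $1$; the residue is the $K_4$ on $v_1,\ldots,v_4$, in which $v_1$ has list size at least $4>3$ and so is very low, and after its deletion every remaining vertex is very low in the $K_3$, so iterated deletion finishes. If neither $w_j$ lies in $S_1$ and $S_1\ne\emptyset$, pick any $v_i\in S_1$ and paint it; the five-vertex residue is a $K_3\vee E_2$ in which both $w_j$'s still have list size $3$ (low) and at least one vertex of the $K_3$ retains list size equal to its degree in the residue (low), so Lemma~\ref{K3vE2} finishes.

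The remaining case is that exactly one of $w_1,w_2$, say $w_2$, lies in $S_1$. Painter's choice is delicate: if some $v_i$ with $i\ge 2$ lies in $S_1$, paint that $v_i$; otherwise if $v_1\in S_1$, paint $v_1$; otherwise $S_1=\{w_2\}$ and paint $w_2$. In the first two sub-cases the residue is a $K_3\vee E_2$ on five vertices; a direct bookkeeping check shows a vertex of the $K_3$ is still low (namely $v_1$ in the first sub-case, or any of $v_2,v_3,v_4$ in the second) and $w_1$ is still low in the $E_2$, so Lemma~\ref{K3vE2} applies. In the third sub-case the residue is a $K_5$ on $\{v_1,\ldots,v_4,w_1\}$ in which $v_1$ has list size $5>4$ and so is very low; iterated deletion finishes.

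The subtle obstacle is the sub-case $S_1=\{v_1,w_2\}$: since $v_1$ and $w_2$ are adjacent in $G$, Painter can paint only one of them, and the tempting choice of painting $w_2$ would leave a $K_5$ on $\{v_1,\ldots,v_4,w_1\}$ with list multiset $(4,4,4,4,3)$, which is \emph{not} paintable because $\chi_p(K_5)=5$. Painter must instead paint $v_1$, yielding a $K_3\vee E_2$ in which $v_2,v_3,v_4,w_1$ are all low and $w_2$ is high, so Lemma~\ref{K3vE2} applies. This is the one place where choosing the ``wrong'' vertex to paint loses the game, so the case split must be done deliberately rather than greedily.
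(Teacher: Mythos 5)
Your case analysis follows the same route as the paper (split on $S_1$, reduce to Lemma~\ref{K3vE2} or to iterated deletion of very low vertices), and your treatment of the sub-case $S_1=\{v_1,w_2\}$ is exactly right. But there is a genuine flaw in the case $S_1\cap\{w_1,w_2\}=\emptyset$: you tell Painter to ``pick any $v_i\in S_1$'' and justify it by claiming some vertex of the residual $K_3$ still has list size equal to its degree. That claim is false when $S_1=\{v_1,v_2,v_3,v_4\}$ and Painter chooses to paint $v_1$: each of $v_2,v_3,v_4$ was presented but not painted, so each is left with list size $3$ against degree $4$ in the residue, while $w_1,w_2$ have list size $3$ against degree $3$. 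The residue is $K_3\join E_2=K_5-e$ with all five lists of size $3$; since $\chi(K_5-e)=4$, assigning every vertex the same $3$-element list shows it is not $3$-choosable, hence not paintable, so this move loses the game outright. In other words, ``pick any'' permits a losing move, and the trap is the mirror image of the one you were careful about in the $\{v_1,w_2\}$ sub-case: there Painter must avoid the greedy choice $w_2$, here Painter must avoid the low vertex $v_1$.

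The repair is exactly the priority the paper enforces: Painter paints a high vertex $v_j$ with $j\ge 2$ whenever one lies in $S_1$, and paints $v_1$ only when it is the sole $v_i$ presented. With that amendment, the residual $K_3$ always contains $v_1$ with list size at least its degree (low or very low --- note that in several of your sub-cases $v_1$ is in fact very low rather than ``equal,'' which is harmless by monotonicity), and $w_1$ (or both $w_j$) is low in the $E_2$, so Lemma~\ref{K3vE2} finishes as you intend. The remaining cases of your argument are correct and coincide with the paper's proof.
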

\begin{proof}
Denote the vertices of the $K_4$ by $v_1, \ldots, v_4$, with $v_1$ low and the
vertices of $E_2$ by $w_1, w_2$.
If $w_1,w_2\in S_1$, then color them both 1.  Now
$v_1$ becomes very low and the other $v_i$ become low, so we can finish by
coloring greedily, with $v_1$ last.  So $S_1$ contains at most one 
$w_i$, say $w_2$.  Suppose $S_1$ contains a $v_j$ other than $v_1$.  Color
$v_j$ with 1.  Now $w_1$ becomes low, $v_1$ remains low, and the
other vertices remain high.  So we can finish the coloring by 
Lemma~\ref{K3vE2}.  If the only $v_i$ in $S_1$ is $v_1$, then color it 1.  Now
the other $v_j$ become low, so again we finish by Lemma~\ref{K3vE2}.  Finally,
if the only vertex in $S_1$ is $w_2$, then color it 1.  Now $v_1$ becomes very
low, and the other $v_i$ become low, so again we can finish by coloring
greedily, ending with a low vertex and a very low vertex.
\end{proof}

\begin{lemma}
\label{K4v2E2}
If $G$ is $K_4 \join H$ with $H$ containing two disjoint nonadjacent pairs, then
$G$ is $d_1$-paintable.
\end{lemma}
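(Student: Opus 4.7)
The plan is to give Painter an explicit strategy on round~$1$ that reduces the problem to Lemma~\ref{K4vE2}. Label the $K_4$ vertices $v_1,\ldots,v_4$ and let $\{a_1,a_2\},\{b_1,b_2\}\subseteq V(H)$ be the two disjoint nonadjacent pairs. Since $G = K_4 \join H$ adds no edges inside $V(H)$ beyond those of $H$ itself, these pairs remain nonadjacent in $G$. The core observation is that coloring two nonadjacent vertices of $V(H)$ in a single round drops each $v_i$'s degree by $2$ while dropping its list size by at most $1$, so every $v_i$ becomes low or very low.

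I would first handle the main case: $S_1$ contains some nonadjacent pair from $V(H)$, namely $\{a_1,a_2\}$, $\{b_1,b_2\}$, or a mixed pair $\{a_i,b_j\}$ with $a_i b_j \notin E(H)$. Painter uses color~$1$ on the pair. Afterwards every $v_i$ is low or very low; the very low $v_i$'s, as well as any vertex of $V(H)$ that has become very low, can be removed using the very-low deletion rule. What remains is an induced $K_4 \join E_2$ with at least one low vertex in the $K_4$, so Painter wins by Lemma~\ref{K4vE2}.

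Otherwise every pair of vertices of $S_1\cap V(H)$ is adjacent in $H$. The induced subgraph $H[\{a_1,a_2,b_1,b_2\}]$ is triangle-free, because any triple of its vertices contains either $\{a_1,a_2\}$ or $\{b_1,b_2\}$; hence $|S_1 \cap \{a_1,a_2,b_1,b_2\}|\le 2$. In this subcase Painter colors a single vertex of $S_1$---preferentially a $K_4$-vertex, so that every other remaining vertex loses one neighbor---and the game proceeds to round~$2$ on a strictly smaller instance still containing both nonadjacent pairs. Iterating this, eventually some $S_k$ must contain a nonadjacent pair in $V(H)$ and we fall into the main case.

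The main obstacle is the bookkeeping needed in the second case: certifying that Painter survives the single-vertex rounds without depleting any vertex's list budget before reaching a round where a nonadjacent pair becomes available in $S_k$. Careful tracking of the low, high, and very-low statuses across rounds, combined with aggressive use of the very-low deletion rule, should let every branch of the analysis ultimately reduce to Lemma~\ref{K4vE2} (or, in degenerate intermediate steps, to Lemma~\ref{K3vE2}).
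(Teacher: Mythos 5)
There is a genuine gap, in two places. First, your ``main case'' move is unsound when the nonadjacent pair you color is a mixed pair $\{a_i,b_j\}$: the two uncolored special vertices (say $a_{3-i}$ and $b_{3-j}$) may be adjacent in $H$, so what remains is not $K_4\join E_2$ but $K_4\join K_2=K_6$, with the four $K_4$-vertices low and the two remaining $H$-vertices still high. That position is lost for Painter: in $K_6$ a proper coloring from lists is a system of distinct representatives, and with two lists of size $4$ (take them equal) contained in a common $5$-set used for the four lists of size $5$, the union of all six lists has size $5<6$, so the configuration is not even $f$-choosable, let alone $f$-paintable. The reduction to Lemma~\ref{K4vE2} is valid only when the colored pair is one of the two \emph{designated} disjoint pairs, which is exactly how the paper's proof uses this step.

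Second, and more seriously, the ``otherwise'' branch is the real content of the lemma and your argument for it does not work. Lister is under no obligation ever to present a nonadjacent pair of $H$ in any round, so the claim that ``eventually some $S_k$ must contain a nonadjacent pair'' is false; meanwhile the single-vertex rounds deplete list budgets. Moreover, after Painter colors a $K_4$-vertex the graph is no longer of the form $K_4\join H$, so there is no ``strictly smaller instance'' of the same shape to iterate on. The paper closes this branch with an explicit multi-round analysis: if some $v_i\notin S_1$, Painter can color a single vertex and finish by Lemma~\ref{K3vE2} or~\ref{K4vE2}; otherwise Painter colors $v_4$, which forces two of the special vertices to become low, then analyzes $S_2$ (again reducing via Lemmas~\ref{K3vE2} and~\ref{K4vE2} unless $S_2$ contains some $v_i$), colors $v_3$, and splits into subcases according to which $w$'s are missing from $S_2$, finishing each via deletion of very low vertices, Lemma~\ref{K4-e}, or a further analysis of $S_3$. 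None of this bookkeeping can be replaced by the proposed iteration, so the proof as sketched does not establish the lemma.
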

\begin{proof}
We may assume $|H|=4$.
Denote the vertices of $K_4$ by $v_1, \ldots, v_4$ and the vertices of $H$ by
$w_1,\ldots, w_4$ with $w_1\nonadj w_2$ and $w_3\nonadj w_4$.  If $w_1,w_2\in
S_1$, then color $w_1$ and $w_2$ with 1.  Now every $v_i$ becomes low, so we
can finish by Lemma~\ref{K4vE2}.  Similarly, if $w_3,w_4\in S_1$.  

If some $v_i$ is missing from $S_1$, then use 1 to color either some $v_j$ or
some $w_k$.  In the first case, we finish by Lemma~\ref{K3vE2} and in the
second by Lemma~\ref{K4vE2}.
So color $v_4$ with 1.  Now, by symmetry, $w_2,w_4\notin S_1$, so they each
become low.
If $w_1, w_2\in S_2$, then color them both with 2.  Now every $v_i$ becomes
low, so we can finish by Lemma~\ref{K3vE2}.  Similarly if $w_3, w_4\in S_2$. 
So $S_2$ contains at most one of $w_1,w_2$ and at most one of $w_3, w_4$.
If $S_2$ contains no $v_i$,  then we color some $w_j$ with 2.  This makes every
$v_i$ low.  Now we can finish by Lemma~\ref{K3vE2}.  So $S_2$ contains some $v_i$, say $v_3$.  

Color $v_3$ with 1.  Recall that
$S_1$ was missing at least one of $w_1, w_2$ and at least one of $w_3, w_4$.
(i) If $w_2,w_4\notin S_2$, then they both become very low, so we can delete
them.  This in turn makes $v_1$ and $v_2$ both very low, so we can finish
greedily. (ii) If $w_2,w_3\notin S_2$, then $w_2$ becomes very low, so we delete
it.  Now $v_1$ and $v_2$ become low; also $w_3$ and $w_4$ are low.  Since
$v_1,v_2,w_3,w_4$ induce $K_4-e$ with all vertices low, we can finish by
Lemma~\ref{K4-e}.  By symmetry, this handles the case $w_1,w_4\notin S_2$.
(iii) If $w_1,w_3\notin S_2$, then the uncolored vertices induce $K_2\join H$,
with all vertices of $H$ low.  Now consider $S_3$.  If $S_3$ contains a
nonadjacent pair in $H$, then color them both 3.  This makes $v_1$ and $v_2$
low, so what remains is $K_4-e$ with all vertices low.  We now finish by
Lemma~\ref{K4-e}.  Similarly, if $S_3$ contains no $v_i$, then color some $w_j$ with
3, and we can finish by Lemma~\ref{K4-e}.  So $S_3$ contains some $v_i$, say
$v_2$, and we color $v_2$ with 3.  Now one of $w_1,w_2$ becomes very low and
one of $w_3,w_4$ becomes very low.  We can delete the very low vertices, which
in turn makes $v_1$ very low.  We can now finish greedily, since what remains
is a 3-vertex path with two low vertices and a very low vertex.
\end{proof}

We won't use Lemma~\ref{K6vE3} in the proof, but it is generally useful so we record it here.

\begin{lemma}
\label{K6vE3}
If $G$ is $K_6 \join E_3$, then $G$ is $d_1$-paintable.
\end{lemma}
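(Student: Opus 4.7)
Label the $K_6$-vertices as $v_1,\dots,v_6$ (lists of size $7$) and the $E_3$-vertices as $w_1,w_2,w_3$ (lists of size $5$). The plan is to exhibit an explicit winning strategy for Painter, using case analysis on $|S_k \cap V(E_3)|$ in the same style as Lemmas~\ref{K3vE2}, \ref{K4vE2}, and \ref{K4v2E2}. The key structural fact is that $V(E_3)$ is independent, so Painter may color every $w_j \in S_k$ in a single round.

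First I would dispose of the easy cases. If $E_3 \subseteq S_1$, Painter colors all of $E_3$, leaving $K_6$ with all lists of size $\ge 6$, which is paintable since $\chi_p(K_6)=6$. If $|S_1 \cap V(E_3)| \le 1$, Painter colors a $v_i \in S_1$ (or the lone $w_j$, if $S_1 \subseteq V(E_3)$), reducing to a smaller $K_m \vee E_n$ instance handled by iterating the same strategy, ultimately falling into the first case or into a purely clique residual. Such clique residuals are disposed of by the sorted-list-size condition for cliques: when the list sizes sorted ascending $a_1 \le \cdots \le a_n$ satisfy $a_i \ge i$ for all $i$, Painter wins (always color a vertex in $S_k$ of smallest current list size).

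The delicate case is $|S_1 \cap V(E_3)|=2$, say $w_1,w_2 \in S_1$ and $w_3 \notin S_1$, together with $V(K_6) \subseteq S_1$. Here Painter cannot afford to color $\{w_1,w_2\}$ immediately: doing so leaves $K_7$ on $\{v_1,\dots,v_6,w_3\}$ with $w_3$ of list size $5$ and every $v_i$ of list size $6$, and such a clique (with sorted list sizes $5,6,6,6,6,6,6$) fails the sorted-list-size condition at position $7$. Instead Painter colors a single $v_i$, reducing to $K_5 \vee E_3$ with the $w_1,w_2$-lists dropped to $4$, and iterates: after one more round of coloring a $v_i$, the residual clique $K_5$ obtained by then pairing $\{w_1,w_2\}$ has sorted list sizes $4,4,4,4,5$, which does satisfy $a_i \ge i$, so Painter may safely color $\{w_1,w_2\}$ and finish off the clique.

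The principal obstacle is this timing argument: Painter must neither color $\{w_1,w_2\}$ too early (leaving a non-paintable clique) nor too late (allowing some $w$-list to deplete to $0$). A short arithmetic check confirms the break-even point is always reached in time, and the remaining subcases (Lister including other patterns of $w_j$'s in $S_k$ or omitting some $v_i$'s) reduce by the same bookkeeping to either smaller $K_m \vee E_n$ instances handled recursively or to cliques satisfying the sorted-list-size condition.
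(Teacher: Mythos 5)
Your plan has the same overall shape as the paper's proof (an explicit Painter strategy, case analysis each round on which $w_j$'s are presented, colouring a clique vertex when exactly two $w_j$'s appear, and finishing on favourable residuals), and replacing the paper's finishing lemmas by the sorted-list-size criterion for cliques is a legitimate tool. But there is a genuine gap: the substance of this lemma is the adaptive, multi-round bookkeeping, and you verify only one line of play. In your ``delicate case'' you fix the schedule ``colour a $v_i$ twice, then pair $\{w_1,w_2\}$,'' and you check it only against a Lister who re-presents the same pair $\{w_1,w_2\}$ together with all of the clique every round. If Lister instead rotates the omitted vertex --- say $S_1\supseteq\{w_1,w_2\}$, $S_2\supseteq\{w_1,w_3\}$, $S_3\supseteq\{w_2,w_3\}$, always with all uncoloured $v_i$ --- then pairing the two presented $w$'s in round 3 leaves a $K_5$ on four $v_i$'s with $4$ tokens each plus $w_1$ with $3$ tokens; the sorted sizes $3,4,4,4,4$ violate $a_5\ge 5$, and Lister wins from there by presenting all five vertices every round. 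So the stated break-even rule is unsound against an adaptive Lister. The alternative --- keep colouring $v_i$'s --- leads after three rounds to $K_3\join E_3$ with all three $E_3$-vertices low, an endgame that is neither a clique nor an instance your recursion covers, and it still needs its own case analysis (this is the paper's round-4 argument via Lemma~\ref{K4-e}).

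More generally, the claim that the reduction is to ``smaller $K_m\join E_n$ instances handled by iterating the same strategy'' cannot be taken at face value: smaller joins with plain $d_1$ list sizes are not paintable. For example, $K_3\join E_3$ with lists $\{1,2\},\{1,3\},\{2,3\}$ on the $E_3$ and $\{1,2,3,4\}$ on the triangle is not even list-colourable, since the three $2$-lists have no common colour, so any colouring of the $E_3$ removes at least two colours from the triangle's lists. The recursion only works because the $w_j$'s that Lister omits accumulate slack (become low or very low relative to their shrinking degrees), and tracking exactly how much slack is available, round by round and against all of Lister's options (rotating the omitted $w_j$, omitting $v_i$'s, presenting only $w_j$'s), is the entire content of the proof. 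This is what the paper carries out explicitly over four rounds, with reductions to Lemmas~\ref{K4-e}, \ref{K3vE2}, and~\ref{K4vE2}; in your write-up it is replaced by ``a short arithmetic check'' and ``the same bookkeeping,'' which is precisely the part that is missing.
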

\begin{proof}
Denote the vertices of $K_6$ by $v_1,\ldots, v_6$ and the vertices of $E_3$ by
$w_1,w_2,w_3$.  If $w_1,w_2,w_3\in S_1$, then color $w_1,w_2,w_3$ all with 1.
Now all $v_i$ are very low, so we finish greedily.  If no $v_i$ appears in
$S_1$, then color some $w_j$ with 1.  Now all the $v_i$ are low, so we can
finish by Lemma~\ref{K4vE2}.  So some $v_i$ is in $S_1$, say $v_6$.  Color
$v_6$ with 1.  This makes some $w_i$ low, say $w_3$.  Repeating this argument,
we get by symmetry that $v_5\in S_2$ and $S_2$ is missing some $w_j$.  If
$S_2$ is missing $w_3$, then color $v_5$ with 2.  Now $w_3$ becomes very low,
so we delete it.  This in turn makes all uncolored $v_k$ low.  Now we can
finish by Lemma~\ref{K4vE2}.  So instead $S_2$ is missing (by symmetry) $w_2$. 
Again repeating the argument, we must have $v_4\in S_3$ and $w_1\notin S_3$;
otherwise we finish by Lemma~\ref{K3vE2} or Lemma~\ref{K4vE2}.  Now we color
$v_4$ with 3.  What remains is $K_3\join E_3$ with every $w_i$ low.

Now consider $S_4$.  If $w_1,w_2,w_3\in S_4$, then color them all with 3.  Now
all remaining vertices become very low, so we finish greedily.  Suppose instead
that $w_1\in S_4$ and $v_1\notin S_4$.  Color $w_1$ with 4.  What remains is
$K_3\join E_2$ with both $w_i$ low and some $v_j$ low.  So we can finish by
Lemma~\ref{K4-e}.  A similar approach works for any $w_i\in S_4$ and $v_j\notin
S_4$.  So instead, assume by symmetry that $v_1\in S_4$ and $w_1\notin S_4$.
Color $v_1$ with 4.  Now $w_1$ becomes very low, so we delete it.  This in turn
makes $v_2$ and $v_3$ low.  Now we can finish by Lemma~\ref{K4-e}.
\end{proof}

\begin{lemma}
\label{C6}
If $G$ is $C_6^2$, then $G$ is $d_1$-paintable.
\end{lemma}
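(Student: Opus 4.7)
I would identify $C_6^2$ with the octahedron $K_{2,2,2}$: labeling the $6$-cycle vertices $1,\ldots,6$, the edges of $C_6^2$ are all pairs at cycle-distance $1$ or $2$, and the three non-edges $\{1,4\}$, $\{2,5\}$, $\{3,6\}$ partition $V(C_6^2)$ into three non-edge pairs which I write $\{a_i, b_i\}$ for $i \in \{1,2,3\}$. Every vertex has degree $4$, so ``$d_1$-paintable'' means ``$3$-paintable.'' The key structural facts I would use are that every independent set of size $\ge 2$ in $K_{2,2,2}$ is one of the three non-edges and every clique has size at most $3$.

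The plan is a case analysis on round $1$, with the following strategy for Painter: if $S_k$ contains some non-edge $\{a_i, b_i\}$, color both with $k$; otherwise $S_k$ is a clique, and Painter colors one vertex of $S_k$ (with a priority rule to be specified). If round $1$ colors a non-edge, the residual graph on the four remaining vertices is $K_{2,2} \cong C_4$ with every vertex having list size $\ge 2$, so Painter finishes because $C_4$ is $2$-paintable. If instead $S_1$ is a clique, Painter colors a single vertex $v \in S_1$; its non-edge partner $v'$ satisfies $v' \notin S_1$, so $v'$ still has list size $3$. The residual graph is $K_{2,2,2} - v \cong K_1 \vee C_4$ with hub $v'$, and the four rim-vertices still contain the two non-edges of $C_6^2 - v$. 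I would then iterate on round $2$: either $S_2$ contains one of these two non-edges (color it, reducing to a $P_3$ that is easily painted from the leftover lists), or $S_2$ is again a clique (color one more vertex, iterating on a $4$-vertex residual).

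The main obstacle will be the second case, where Lister persistently presents cliques of size at most $3$ and Painter colors only one vertex per round. The crux is to verify that Painter's choice (for instance, the vertex of $S_k$ with the smallest remaining list, breaking ties toward a vertex whose non-edge partner is still uncolored) keeps each residual list size at least as large as the residual degree, so no vertex is depleted before being colored; since $|S_k| \le 3$ in the clique subcase, the bookkeeping is tight but should close. If the direct bookkeeping becomes unwieldy, a clean alternative is to invoke Theorem B: orient $K_{2,2,2}$ so that every vertex has outdegree $2$ (for example, cyclically orient the two vertex-disjoint triangles on $\{a_1,a_2,a_3\}$ and $\{b_1,b_2,b_3\}$, and cyclically orient the $6$-cycle formed by the remaining six cross-edges), and verify $|EE(\vec D)| \neq |EO(\vec D)|$ by enumerating the relatively few eulerian subgraphs of this orientation by hand.
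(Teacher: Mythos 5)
Your setup is the same as the paper's: the case where $S_1$ contains a nonadjacent (partner) pair, leaving a $C_4$ that is $2$-paintable, is exactly the paper's first move. The genuine gap is in the clique case, which is where all of the work in the paper lies and which you defer with ``the bookkeeping is tight but should close.'' The invariant you propose to maintain --- every uncolored vertex keeps list size at least its residual degree --- is not a sufficient condition for Painter to win even if it is maintained: a leftover triangle with all three lists of size $2$, an edge with both lists of size $1$, and the $P_3$ with lists $1,2,1$ are all Lister wins while satisfying it. Even the $P_3$ you reach after a round-$2$ non-edge is only safe because its two leaves are partners, hence at most one of them lay in the clique $S_1$, so their lists are at worst $1$ and $2$ --- an observation you never make. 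In the branch where Lister keeps presenting cliques, your priority rule is underdetermined (ties, and which diagonal to paint when both are presented) and unanalyzed; the paper's proof of this lemma is precisely a careful case analysis of $S_2$ after the first clique round, finishing each branch either with a very low vertex and a greedy argument or with its Lemma on $K_4-e$ with prescribed low vertices. Without that analysis (or some substitute stronger than ``list $\ge$ degree''), your primary route does not close.

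Your fallback via Theorem B is sound and would in fact succeed: the octahedron's edge set really is the disjoint union of the two triangles $a_1a_2a_3$, $b_1b_2b_3$ and the $6$-cycle on the six cross edges $a_ib_j$ ($i\ne j$), cyclic orientations give every vertex outdegree $2$, and for such an orientation $|EE(\vec D)|\ne|EO(\vec D)|$. Indeed, you can skip the hand enumeration: the Alon--Tarsi difference is, up to sign, the coefficient of $\prod_v x_v^{2}$ in the graph polynomial of $C_6^2$, so it is the same for every orientation with all outdegrees equal to $2$, and the paper exhibits one such orientation (in its Alon--Tarsi section) with $EE=22$ and $EO=16$, i.e.\ difference $6$. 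As written, though, you leave the decisive count unverified, so neither of your two routes is complete; the second is the one to finish, and the coefficient observation closes it immediately.
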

\begin{proof}
Denote the vertices of the 6-cycle by $v_1,\ldots,v_6$ in order.  So $v_i$ is
adjacent to all but $v_{(i+3)\bmod 6}$.  Consider $S_1$.  If $S_1$ contains some
nonadjacent pair, then color them with 1.  What remains is $C_4$ with all
vertices low, so we can complete the coloring since $C_4$ is 2-paintable.  So
assume that $S_1$ contains no nonadjacent pairs.  Now without loss of
generality, we assume $S_1=\{v_1,v_2,v_3\}$, since adding vertices to $S_1$
only makes things harder to color, as long as $S_1$ induces a clique; we may
also need to permute a nonadjacent pair.
Color $v_1$ with 1.  

Now
$v_5$ and $v_6$ become low.  Consider $S_2$.  Again, if $S_2$ contains a
nonadjacent pair, then we color both vertices with 2 and can finish greedily
since all remaining vertices are low, except for one that is very low.
If $v_2,v_3\in S_2$, then color $v_2$ with 2.  Now $v_6$ becomes very low and
$v_5$ remains low, so we can finish greedily.  So $S_2$ misses at least one of
$v_2,v_3$.  Suppose $v_4\in S_2$.  Color $v_4$ with 3.  What remains is $C_4$.
If $v_2,v_3\notin S_2$, then all vertices are low, and we can finish since $C_4$
is 2-paintable.  Otherwise, $v_5$ or $v_6$ becomes very low and the other
remains low.  Now we can finish greedily.  So $v_4\notin S_2$.  If $v_2\in S_2$,
then color $v_2$ with 2.  Now $v_3$ and $v_4$ become low, so we can finish by
Lemma~\ref{K4-e}.  An analogous argument works if $v_3\in S_2$.  So assume
$v_2,v_3,v_4\notin S_2$.  Now color $v_5$ or $v_6$ with 2.  Again we can finish
by Lemma~\ref{K4-e}.
\end{proof}

\begin{lemma}
\label{K2vC4}
If $G$ is $K_2\join C_4$, then $G$ is $d_1$-paintable.
\end{lemma}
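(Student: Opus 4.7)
The plan is to argue directly by playing the online game and performing a case analysis on $S_1$, in the style of Lemmas~\ref{K4-e} through~\ref{K3vE2}. Denote the two $K_2$ vertices by $u_1, u_2$ (each with list size $4$) and the $C_4$ vertices by $w_1, w_2, w_3, w_4$ in cyclic order (each with list size $3$); the only non-edges of $G$ are the two diagonals $w_1 w_3$ and $w_2 w_4$. Note that an Alon--Tarsi approach via Theorem~B is ruled out by a parity check: $d_1$-paintability would require an orientation with $\sum d^+(v)=2|E(G)|-2|V(G)|=14$, but $|E(G)|=13$, so we really must give an explicit painting strategy.

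The principal case split is on whether $S_1$ contains a diagonal pair of the $C_4$. If $\{w_1,w_3\}\subseteq S_1$, Painter uses color $1$ on both vertices; what remains is $K_4-e$ on $\{u_1,u_2,w_2,w_4\}$ (with the missing edge being the other diagonal $w_2w_4$), and every uncolored vertex has remaining list size at least its new degree, with strict excess for any vertex absent from $S_1$. Since this situation is at least as favorable as the hypothesis of Lemma~\ref{K4-e}, Painter finishes by executing that lemma's strategy. The case $\{w_2,w_4\}\subseteq S_1$ is symmetric.

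If $S_1$ contains no diagonal, then $S_1\cap\{w_1,\ldots,w_4\}$ lies inside a pair of consecutive $C_4$-vertices, and Painter colors a single vertex of $S_1$, preferring a $u_i$ over a $w_j$ to maximize the residual degree reduction. The residual graph is then a subgraph of the wheel $W_4=K_1\vee C_4$, and crucially each remaining vertex not in $S_1$ carries one extra color of slack beyond the $d_1$ amount. Painter now iterates on $S_2$: if a diagonal reappears, color it (reducing to a short path or a $K_4-e$ that finishes via Lemma~\ref{K4-e}); otherwise color another single well-chosen vertex, reducing to a four-vertex configuration which either paints greedily (because the accumulated slack outstrips the remaining degrees) or falls under Lemma~\ref{K3vE2} or Lemma~\ref{K4vE2}.

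The main obstacle is precisely that the residual wheel $W_4$ is itself not $d_1$-paintable, so the reduction after round~$1$ is only sound because of the free slack accrued by every vertex that did not appear in $S_1$. Painter's choice of whom to color in each round must therefore be calibrated so that, against Lister's worst possible response, enough slack survives to prevent any vertex from exhausting its list; the bookkeeping of this slack across the round-$2$ subcases — especially when Lister targets the very vertex whose list is already shortest — is the most delicate part of the proof.
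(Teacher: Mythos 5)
Your round-1 analysis (the diagonal-pair case, and otherwise coloring a single vertex) matches the paper's opening moves, but the proof has genuine gaps where the actual content of the lemma lives. First, the aside that an Alon--Tarsi approach is ``ruled out by a parity check'' is wrong: Theorem B only requires an orientation with $d^+(v)\le d(v)-2$ (equivalently indegree at least $2$) at every vertex, not equality, so the only constraint is $\sum_v d^+(v)=|E|=13\le 2|E|-2|V|=14$, which is satisfiable; indeed the paper exhibits exactly such an orientation of $K_2\join C_4$ with $EE=30\ne EO=28$ in Figure~\ref{indirect-fig1}, so the graph is even $d_1$-AT. Second, and more importantly, the case where $S_1$ contains no diagonal is not actually proved, and several of the claims made there fail. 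If $S_1$ contains neither vertex of the $K_2$, Painter must color some $w_j$, and the residual graph is $K_2\join P_3\cong K_3\join E_2$ (nine edges), not a subgraph of the wheel $W_4$; the paper disposes of this subcase immediately via Lemma~\ref{K3vE2}, but your sketch does not. When a $K_2$-vertex is colored, the residual $W_4$ analysis is only gestured at: a four-vertex residual can never ``fall under'' Lemma~\ref{K3vE2} or Lemma~\ref{K4vE2} (those graphs have five and six vertices), and with your stated preference of always coloring a $u_i$ when possible, Lister can steer the game (e.g.\ $S_1\supseteq\{u_1,u_2,w_1,w_2\}$, then $S_2=\{u_2,w_3,w_4\}$) to a residual $C_4$ in which every vertex has list size exactly $2$; there greedy coloring does not suffice and one needs the $2$-paintability of $C_4$, which the paper invokes explicitly but your argument never mentions, or else a more careful round-2 choice rule (the paper's case split on whether $S_2$ contains $w_3$, $w_4$, the remaining $K_2$-vertex, etc.). You acknowledge that the round-2 slack bookkeeping is ``the most delicate part of the proof,'' but that bookkeeping is precisely what is missing, so as it stands the proposal is an outline of the paper's strategy rather than a proof.
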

\begin{proof}
Denote the vertices of $K_2$ by $v_1, v_2$ and the vertices of $C_4$ by
$w_1,\ldots w_4$ in order.  If $S_1$ contains a pair of nonadjacent vertices,
then color them both 1.  What remains is $K_4-e$, with all vertices low.  So we
can finish by Lemma~\ref{K4-e}.  So $S_1$ misses at least one of $w_1,w_3$ and
at least one of $w_2,w_4$.  By symmetry, say it misses $w_1$ and $w_2$.  
Suppose $v_1,v_2\notin S_1$. Now by symmetry $w_3\in S_1$, so color $w_3$ with
1.  This makes each of $w_2, v_1, v_2$ low.  
So what remains is $K_3\join E_2$ with two low vertices in the $K_3$ and a low
vertex in the $E_2$.  Hence, we can finish by Lemma~\ref{K3vE2}.

So instead (by symmetry) $v_2\in S_1$.  Color $v_2$ with 1.  What remains is $K_1\join C_4$ with $w_1$
and $w_2$ low.  Consider $S_2$.  Again if $S_2$ contains a nonadjacent pair,
then we color them both 2, and we can finish greedily.  Suppose that $w_3\in
S_2$.  If $w_4\notin S_2$, then we color $w_3$ with 4; now $w_4$ becomes low,
so we can finish by Lemma~\ref{K4-e}.  If instead $w_4\in S_2$, then $w_2\notin
S_2$.  Now when we color $w_3$ with 2, $w_2$ becomes very low, so we can finish
greedily.  So assume $w_3,w_4\notin S_2$.  If $v_1\in S_2$, then color $v_1$
with 1.  What remains is $C_4$ with all vertices low.  Now we can finish the
coloring since $C_4$ is 2-paintable.  The proof is similar to that for
2-choosability, so we omit it.  So assume that $v_1\notin S_2$.  By symmetry, we
have $w_1\in S_2$.  Color $w_1$ with 2.  What remains is $K_4-e$ with only $w_3$
high.  Hence we can finish by Lemma~\ref{K4-e}.
\end{proof}

\begin{lemma}
\label{K3vP4}
If $G$ $K_3\join P_4$, then $G$ is $d_1$-paintable.
\end{lemma}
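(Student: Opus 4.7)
The plan is to prove $K_3\vee P_4$ is $d_1$-paintable by case analysis on the first list $S_1$ presented by Lister, mirroring the style of the earlier direct proofs in this subsection. Label the $K_3$ part as $v_1,v_2,v_3$ and the $P_4$ part as $w_1w_2w_3w_4$; the list sizes are $5$ for each $v_i$, $4$ for $w_2$ and $w_3$, and $3$ for $w_1$ and $w_4$. The three nonadjacent pairs inside the path are $\{w_1,w_3\}$, $\{w_2,w_4\}$, and $\{w_1,w_4\}$, and painting such a pair in round one is the main tool for reducing to a previously-proved lemma.

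First I would handle the easy case: if $S_1\supseteq \{w_1,w_3\}$ or $S_1\supseteq \{w_2,w_4\}$, Painter colors the pair, and the remaining graph is $K_3\vee E_2$ in which every $v_i$ becomes low (or very low) and the $w$-vertex that lost both of its path-neighbors is low; Lemma~\ref{K3vE2} then finishes. When $S_1$ avoids both of those pairs, $S_1\cap\{w_1,\dots,w_4\}$ is a subset of $\{w_1,w_2\}$, $\{w_2,w_3\}$, $\{w_3,w_4\}$, or $\{w_1,w_4\}$. In each such subcase Painter paints either a carefully chosen $v_i$ (adjacent to the path-vertex with the smallest remaining budget) or a single $w_j$, so that the remaining graph becomes one of $K_2\vee P_4$, $K_3\vee P_3$, or $K_3\vee (K_1+P_2)$, with a low vertex on each side of the join. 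A short further analysis on $S_2$ then lets me reduce to Lemma~\ref{K4-e}, Lemma~\ref{K3vE2}, or Lemma~\ref{K4vE2}.

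The main obstacle will be the delicate subcase $\{w_1,w_4\}\subseteq S_1$. Coloring the pair $\{w_1,w_4\}$ leaves $K_5$ on $\{v_1,v_2,v_3,w_2,w_3\}$, which can be handled directly whenever some $v_i$ is very low (that is, $v_i\notin S_1$). If instead $\{v_1,v_2,v_3\}\subseteq S_1$, Painter should color a single $v_i$ in round one and then run a nested $S_2$-analysis analogous to the proof of Lemma~\ref{K2vC4}, exploiting the fact that the resulting $K_2\vee P_4$ still contains the nonadjacent pairs $\{w_1,w_3\}$, $\{w_2,w_4\}$, and $\{w_1,w_4\}$ to be played against in the second round. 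Tracking which $v_i$'s lie in $S_1$ so as to choose Painter's move consistently is the bookkeeping challenge; otherwise the argument is a straightforward extension of the techniques in Lemmas~\ref{K3vE2} and~\ref{K2vC4}.
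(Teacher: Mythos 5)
Your round-one framework is essentially the paper's: paint $\{w_1,w_3\}$ or $\{w_2,w_4\}$ if Lister presents such a pair and finish by Lemma~\ref{K3vE2}, and otherwise use the fact that $S_1$ meets each of these pairs in at most one vertex. Your handling of $\{w_1,w_4\}\subseteq S_1$ when some $v_i\notin S_1$ (paint the pair and finish greedily on the resulting $K_5$, which has a very low vertex) is correct and in fact a little cleaner than what the paper does in that corner.

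The genuine gap is in your second paragraph. You claim that in every remaining subcase one round-one move leaves $K_2\vee P_4$, $K_3\vee P_3$, or $K_3\vee(K_1+P_2)$ \emph{with a low vertex on each side of the join}. That premise fails exactly in the hard situation $\{v_1,v_2,v_3\}\subseteq S_1$: in those subcases $S_1\cap\{w_1,\dots,w_4\}$ consists of pairwise adjacent vertices, so every independent subset of $S_1$ is a single vertex; after Painter colors one vertex, each uncolored $v_i$ has lost one color and one neighbor and is therefore still high, so the clique side of the join has no low vertex no matter which vertex is chosen. This difficulty is not special to $\{w_1,w_4\}\subseteq S_1$ (where you do acknowledge it); it arises for every possible $S_1\cap\{w_1,\dots,w_4\}$, including $S_1$ containing no path vertex at all. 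Consequently the promised reduction to Lemmas~\ref{K4-e}, \ref{K3vE2}, \ref{K4vE2} does not follow from the stated position, and the real content of the lemma --- the full second-round analysis of $K_2\vee P_4$ with both join vertices high and at least two low path vertices, which is the bulk of the paper's proof (it separately treats $S_2$ containing a nonadjacent path pair, $S_2$ containing the remaining clique vertex, $S_2$ meeting $\{w_1,w_4\}$, and $S_2$ meeting the path only in $\{w_2,w_3\}$) --- is deferred rather than carried out. Pointing to Lemma~\ref{K2vC4} as an analogue is not sufficient: the list-size/low-vertex pattern here is different, and each branch needs the explicit bookkeeping the paper performs.
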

\begin{proof}
Let $v_1,v_2,v_3$ denote the vertices of $K_3$ and $w_1,\ldots,w_4$ denote the
vertices of the $P_4$ in order.  If $w_1,w_3\in S_1$, then color them both 1.
Now what remains is $K_3\join E_2$ with all but one vertex low, so we can finish
by Lemma~\ref{K3vE2}.  An analagous strategy works if $w_2,w_4\in S_1$.  So
assume $S_1$ misses at least one of $w_1,w_3$ and at least one of $w_2,w_4$.
If $S_1$ misses $v_1$, then use color 1 on some $w_j$, choosing $w_2$ or $w_3$
if possible.  Again, we can finish by Lemma~\ref{K3vE2}.  So assume $v_1\in
S_1$.  Now color $v_3$ with 1.  What remains is $K_2\join P_4$ with at least two
vertices of the $P_4$ low.  Consider $S_2$.  If $w_1,w_3\in S_2$ (or
($w_2,w_4\in S_2$), then color them both 2, and we can finish greedily since all
vertices are low except for one that is very low.  If $v_2\in S_2$, then color
it with 2.  Now in each case we can finish by repeatedly deleting very low
vertices, possibly using Lemma~\ref{K4-e}.  So $v_2\notin S_2$ (and by symmetry
$v_3\notin S_2$).  If possible use color 2 on $w_1$ or $w_4$.  This leaves
$K_3\join E_2$ with enough low vertices to finish by Lemma~\ref{K3vE2}.
Finally, if $w_1,w_4\notin S_2$, then by symmetry $w_2\in S_2$, so color $s_2$
with 2.  What remains contains a $K_4-e$ with all vertices low, so we can finish
by Lemma~\ref{K4-e}.
\end{proof}

\begin{lemma}
\label{K3vK1+P3}
If $G$ is $K_3\join (K_1+P_3)$, then $G$ is $d_1$-paintable.
\end{lemma}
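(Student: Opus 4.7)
Label the $K_3$ vertices $v_1,v_2,v_3$ and the $K_1+P_3$ vertices $w_0,w_1,w_2,w_3$, with $w_0$ the isolated vertex and $w_1w_2,\, w_2w_3$ the edges of the $P_3$. The $d_1$-list sizes are $5$ at each $v_i$ and $2,3,4,3$ at $w_0,w_1,w_2,w_3$ respectively. The nonadjacent pairs inside $K_1+P_3$ are exactly $\{w_1,w_3\}$ and $\{w_0,w_j\}$ for $j\in\{1,2,3\}$, so the maximal cliques of $G$ are subsets of $\{v_1,v_2,v_3,w_0\}$, $\{v_1,v_2,v_3,w_1,w_2\}$, and $\{v_1,v_2,v_3,w_2,w_3\}$.

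The plan is to run a case analysis on $S_1$ in the style of Lemmas~\ref{K3vP4} and~\ref{K2vC4}. First, if $S_1$ contains any of the four nonadjacent pairs listed above, Painter colors that pair; each $v_i$ then loses two neighbors while its list stays at $5$, so all three $v_i$ become very low and can be deleted, leaving at most two $w$-vertices with no edges between them, which is trivially paintable. Otherwise $S_1$ is a clique contained in one of the three maximal cliques above, and Painter colors a single vertex chosen to set up an earlier lemma: when $w_0\in S_1$ Painter colors $w_0$ (clearing its very short list before it can be listed twice); when $w_0\notin S_1$ but $w_2\in S_1$ Painter colors some $v_i$ in $S_1$, yielding a $K_2\join (K_1+P_3)$ with $w_0,w_1,w_3$ low; in the remaining sub-cases Painter colors whichever $v_i$ is in $S_1$. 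After the move, a single further round of case analysis on $S_2$, using the same pair-versus-clique dichotomy, reduces the remaining graph to a configuration handled by Lemma~\ref{K4-e} or Lemma~\ref{K3vE2}, or leaves only very-low vertices that can be deleted and painted greedily.

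The main obstacle is $S_1=\{v_1,v_2,v_3,w_2\}$, the unique maximum clique of $G$ that avoids all three of $w_0,w_1,w_3$. No single move by Painter creates a very-low vertex here, so the reduction must be genuinely recursive: after coloring some $v_i$ we face $K_2\join (K_1+P_3)$ with three low $w$-vertices and $w_2$ high, and we rerun the same outer case analysis (now with one fewer $v$ and with several $w$'s already low, so the recursion terminates quickly, eventually producing enough very-low vertices to finish). A parallel annoyance throughout is the short list of $w_0$, which means Painter must never let $w_0$ be presented in two rounds without either painting it or coloring one of its three neighbors $v_1,v_2,v_3$; this constraint is what forces the bias toward painting $w_0$ as soon as it appears, and toward choosing moves that reduce $w_0$'s degree whenever possible.
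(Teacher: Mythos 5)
Your overall plan---a direct Painter strategy with case analysis on $S_1$ reducing to Lemmas~\ref{K4-e}, \ref{K3vE2}, \ref{K4vE2}---is the same as the paper's, but the bookkeeping behind your moves is wrong and two of the prescribed moves are actually losing. The root error is that a vertex which is presented but not colored spends one unit of its list, so its status improves only if it is \emph{not} presented (or loses at least two neighbors). Hence after you color a nonadjacent pair from $S_1$, any $v_i\in S_1$ becomes low, not very low, and cannot be ``deleted''; also coloring $\{w_0,w_1\}$ leaves the \emph{adjacent} pair $w_2,w_3$, not ``two $w$-vertices with no edges between them.'' Concretely, if Lister presents all seven vertices in round 1, then coloring any single nonadjacent pair loses: coloring $\{w_0,w_2\}$ leaves $K_3\join \overline{K_2}$ with remaining list sizes $4,4,4$ on the triangle and $2,2$ on the nonadjacent pair, which is not even choosable (give $w_1,w_3$ disjoint $2$-lists inside the common $4$-list of the triangle); coloring $\{w_1,w_3\}$ leaves $w_0$ with one token and degree $3$, and Lister then presents $\{w_0,v_1,v_2,v_3\}$, forcing a $K_4$ in which every remaining list has size $3$. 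The only winning reply to $S_1=V(G)$ is to color the independent triple $\{w_0,w_1,w_3\}$, so ``Painter colors that pair'' must be replaced by a maximal independent subset of the presented $w$'s (a convention the paper's ``color them both'' also implicitly needs), and the residual positions re-analyzed.

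The rule ``when $w_0\in S_1$ Painter colors $w_0$'' fails as well: against $S_1=\{w_0,v_1,v_2,v_3\}$, coloring $w_0$ leaves $K_3\join P_3\cong K_4\join \overline{K_2}$ with \emph{every} vertex high, and that graph is not $d_1$-choosable (the two nonadjacent vertices get $3$-lists inside the clique's common $4$-list, leaving at most three colors for the $K_4$), hence not $d_1$-paintable---this is exactly why Lemma~\ref{K4vE2} demands a low vertex in the $K_4$. The paper colors $w_0$ only when $S_1=\{w_0\}$ and otherwise colors a triangle vertex. Finally, in the genuinely hard case $S_1=\{v_1,v_2,v_3,w_2\}$ (more generally, whenever Painter is forced to color a $v_i$), you defer to ``rerun the same outer case analysis''; but the resulting position $K_2\join(K_1+P_3)$ has a mixed high/low pattern not covered by your outer analysis (which, as above, is itself flawed), and this second-round analysis of $S_2$ is precisely the substance of the paper's proof, so it cannot be waved away. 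The approach is salvageable, but as written the strategy makes losing moves and the key case is left unproven.
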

\begin{proof}
Let $v_1, v_2, v_3$ denote the vertices of $K_3$; let $w_1$, $w_2$, $w_3$
denote the vertices of $P_3$ in order, and let $w_4$ be the $K_1$.
If $w_1,w_3\in S_1$, then color them both 1 and we can finish by
Lemma~\ref{K3vE2}.  If instead $w_2,w_4\in S_1$, then color them both 1, and
again we can finish by Lemma~\ref{K3vE2}.  
If $S_1=\{w_4\}$, then color $w_4$ with 1.  What remains is
$K_3\join P_3$ with all vertices of the $K_3$ low.  Since $K_3\join P_3 \cong
K_4 \join E_2$, we can finish by Lemma~\ref{K4vE2}.
If $w_1\in S_1$ (or $w_2\in S_1$ or $w_3\in S_1$) and $v_3\notin S_1$, then color $w_1$ with 1.  Again we can finish by Lemma~\ref{K3vE2}.  
This implies that $v_3\in S_1$.

Since $v_3\in S_1$, color $v_3$ with 1.  Now at least one of $w_1,w_3$ becomes
low and at least one of $w_2,w_4$ becomes low.  What remains is $K_2\join
(K_1+P_3)$, and by symmetry either (i) $w_1$ and $w_2$ are low or (ii) $w_1$ and
$w_4$ are low.  Consider (i).  If we ignore $w_4$, then what remains is
$K_2\join P_3\cong K_3\join E_2$.  Since $w_1$ and $w_2$ are low, we
can finish by Lemma~\ref{K3vE2}.  Instead consider (ii).
If $w_1,w_3\in S_2$, then color them both with 2.
What remains is $K_4-e$ and all vertices are low, so we finish by 
Lemma~\ref{K4-e}.  Suppose instead that $w_2,w_4\in S_2$.  Color them both
with 2, which makes $v_1$ and $v_2$ low.  If $w_1$ became very low, then we
finish greedily.  Otherwise $w_3$ became low, so we finish by Lemma~\ref{K4-e}.
Now suppose $v_1\in S_2$, and color $v_1$ with 2.
We have four possibilities.  If $w_2$ and $w_3$ become low, then we can finish
by Lemma~\ref{K4-e}.  Similarly, if $w_4$ becomes very low, we delete it; now
$v_2$ becomes low, so we can finish by Lemma~\ref{K4-e}.  In the two remaining
cases, we can finish greedily by repeatedly deleting very low vertices.
\end{proof}

\subsection{Proofs via the Alon-Tarsi Theorem}
Our goal in each of the next lemmas is to prove that a certain graph is
$d_1$-paintable. 
For a digraph $\mv{D}$,
we write $\diff(\mv{D})$ to denote $|EE(\mv{D})|-|EO(\mv{D})|$.
In each case we find an orientation $\mv{D}$ such that each vertex has indegree
at least 2 and $\diff(\mv{D})\ne 0$.
Now the Alon-Tarsi Theorem, specifically the generalization in 
\hyperref[AT-paint]{Theorem B}, proves the graph is $d_1$-paintable.
To compute $\diff(\mv{D})$, we typically want to avoid calculating
$|EE(\mv{D})|$ and $|EO(\mv{D})|$ explicitly.  Rather, we look for a
parity-reversing bijection that pairs elements of $EE(\mv{D})$ with elements of
$EO(\mv{D})$.  In computing $\diff(\mv{D})$, we can ignore all circulations
paired by such a bijection.  We also use the following trick to reduce our
work.  We explain it via an example, but it holds more generally.  

Let $\mv{D}$ contain a 5-clique and two
other vertices $w_1$ and $w_2$ such that for each $v$ either $d^+(v)\le 3$ or
$d^+(v)=4$ and $w_1,w_2\in N^+(v)$.  In
computing $\diff(\mv{D})$, we want to restrict the difference to the set of
circulations in which $d^+(w_1)\ge1$ and $d^+(w_2)\ge 1$; call this
$\diff'(\mv{D})$.  By inclusion-exclusion, we have
$\diff'(\mv{D}) =
\diff(\mv{D})-\diff(\mv{D}-w_1)-\diff(\mv{D}-w_2)+\diff(\mv{D}-w_1-w_2)$.
So it suffices to show that the final three terms on the right side are 0.
If any term were nonzero, then, by the Alon-Tarsi Theorem, we would be able to
color the corresponding subgraph from lists of size at most 4.  However, the
subgraph contains a 5-clique, making this impossible.  Thus, each term is 0, and
we have the desired equality.  (In some cases we use a slight variation of this 
approach, instead concluding that the induced subgraph $H$ with $\diff(H)\ne 0$
is $d_1$-paintable.)
Finally, we combine this technique with the
parity-reversing bijection mentioned above, by restricting the bijection only
to the set of circulations where $d^+(w_1)\ge 1$ and $d^+(w_2)\ge 1$.
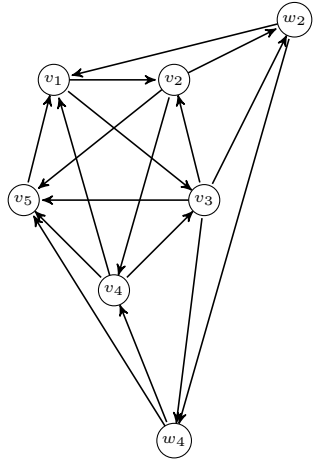
\begin{figure}[ht]
\begin{center}
\begin{tikzpicture}[scale = 8]
\tikzstyle{VertexStyle}=[shape = circle,	
								 minimum size = 6pt,
								 inner sep = 1.2pt,
                                 draw]
\Vertex[x = 0.699999988079071, y = 0.799999997019768, L = \tiny {$v_1$}]{v0}
\Vertex[x = 0.899999976158142, y = 0.799999997019768, L = \tiny {$v_2$}]{v1}
\Vertex[x = 0.949999988079071, y = 0.599999994039536, L = \tiny {$v_3$}]{v2}
\Vertex[x = 0.649999976158142, y = 0.599999994039536, L = \tiny {$v_5$}]{v3}
\Vertex[x = 0.800000011920929, y = 0.449999988079071, L = \tiny {$v_4$}]{v4}
\Vertex[x = 1.10000002384186, y = 0.899999998509884, L = \tiny {$w_2$}]{v5}
\Vertex[x = 0.899999976158142, y = 0.199999988079071, L = \tiny {$w_4$}]{v6}
\Edge[style = {pre}](v3)(v1)
\Edge[style = {post}](v3)(v0)
\Edge[style = {pre}](v1)(v0)
\Edge[style = {post}](v0)(v2)
\Edge[style = {pre}](v0)(v5)
\Edge[style = {post}](v2)(v5)
\Edge[style = {post}](v2)(v1)
\Edge[style = {pre}](v5)(v1)
\Edge[style = {post}](v1)(v4)
\Edge[style = {post}](v4)(v2)
\Edge[style = {pre}](v0)(v4)
\Edge[style = {post}](v4)(v3)
\Edge[style = {pre}](v3)(v2)
\Edge[style = {pre}](v3)(v6)
\Edge[style = {post}](v2)(v6)
\Edge[style = {post}](v6)(v4)
\Edge[style = {post}](v5)(v6)
\end{tikzpicture}
\end{center}
\caption{The orientation for Lemma \ref{farlinked}.}
\label{fig:farlinked}
\end{figure}
\begin{lemma}
\label{farlinked}
Let $H$ be a 5-cycle $v_1,\ldots,v_5$ with pendant edges at $v_2$ and $v_4$, 
leading to vertices $w_2$ and $w_4$, respectively, and let $w_2$ and $w_4$ 
have a common neighbor $x$ (off the cycle).
Let $G=H^2-x$; now $G$ is $d_1$-paintable.
\end{lemma}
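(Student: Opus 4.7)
The plan is to apply Theorem B to an orientation $\mv{D}$ of $G$. First I would record the adjacencies of $G=H^2-x$: computing distances in $H$, the only non-edges of $G$ are $v_1w_4$, $v_2w_4$, $v_4w_2$, and $v_5w_2$, so $d(v_3)=6$, $d(v_i)=5$ for $i\in\{1,2,4,5\}$, and $d(w_2)=d(w_4)=4$. Hence $d_1$-paintability requires list sizes $5,4,4,4,4,3,3$. I would take $\mv{D}$ to be the orientation of Figure~\ref{fig:farlinked}; a direct check confirms that every in-degree is at least $2$, so $1+d^+_{\mv{D}}(v)\le d(v)-1$ at every vertex. By Theorem B it then suffices to prove $\diff(\mv{D})\ne 0$.

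Next I would apply the inclusion--exclusion trick explained in the preamble to this subsection, with $w_2$ and $w_4$ playing the roles of the two ``outside'' vertices. Writing $\diff'(\mv{D})$ for the difference restricted to circulations that use at least one edge at each of $w_2$ and $w_4$,
\[
\diff'(\mv{D}) = \diff(\mv{D}) - \diff(\mv{D}-w_2) - \diff(\mv{D}-w_4) + \diff(\mv{D}-w_2-w_4).
\]
In each of the three subtracted orientations every out-degree is at most $3$, as one verifies by reading the figure; yet each contains the $K_5$ on $\{v_1,\dots,v_5\}$, which is not $4$-choosable. Theorem A therefore forces each of those three $\diff$ values to vanish, so $\diff(\mv{D})=\diff'(\mv{D})$, and it suffices to show $\diff'(\mv{D})\ne 0$.

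The remaining computation of $\diff'(\mv{D})$ is the heart of the argument and the main obstacle. Since $w_2$ has only two out-neighbors $\{v_1,w_4\}$ and two in-neighbors $\{v_2,v_3\}$, with the analogous restriction at $w_4$, the requirement that $w_2$ and $w_4$ each have positive degree in the circulation leaves only a bounded number of possibilities for which edges at $w_2$ and $w_4$ appear. For each such choice I would reduce to counting, with sign, the eulerian subgraphs of the $K_5$-tournament on $\{v_1,\dots,v_5\}$ (together with the edge $v_3\to w_4$) that achieve the prescribed net flow at each $v_i$. The standard tool here is a parity-reversing involution on circulations --- for instance, toggling membership of a fixed triangle in the $K_5$-tournament whose inclusion preserves all local degree conditions. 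I expect the fixed-point set of such an involution to be small enough to enumerate, and summing the signed contributions across the boundary cases should yield a nonzero integer. Once $\diff(\mv{D})\ne 0$ is established, Theorem B delivers the $d_1$-paintability of $G$.
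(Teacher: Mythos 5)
Your setup is fine and matches the paper's strategy: the adjacency structure of $G=H^2-x$ is correct, the orientation of Figure~\ref{fig:farlinked} does have all in-degrees at least $2$, and the inclusion--exclusion against the $K_5$ on $\{v_1,\dots,v_5\}$ (whether you delete the vertices $w_2,w_4$ as in the subsection preamble, or the edges $v_3w_2,v_3w_4$ as the paper does in this lemma) validly reduces the problem to showing that the signed count over circulations meeting $w_2$ and $w_4$ is nonzero. But that is where your proposal stops being a proof: the entire content of the lemma is the verification that this restricted difference is nonzero for this particular orientation, and you only assert that a case analysis ``should yield a nonzero integer.'' That cannot be taken on faith --- for a generic orientation with the right out-degrees the difference can perfectly well vanish (indeed the argument itself shows $\diff=0$ for several closely related digraphs), so the nonvanishing is a genuinely delicate fact about the chosen orientation, established in the paper only by working through the forced edges ($v_3v_2,v_3v_5\notin\mv{T}$, hence $v_1v_3,v_4v_3\in\mv{T}$, etc.) and the surviving cases, ending with $|\diff(\mv{D})|=2$.

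A secondary technical point: your proposed mechanism, ``toggling membership of a fixed triangle,'' is not an involution on all circulations. Symmetric difference with a directed triangle preserves the balance condition $d^+=d^-$ only at vertices where both triangle edges are toggled together; so it only pairs circulations containing \emph{all} of the triangle's edges with those containing \emph{none}, and the circulations containing a proper nonempty subset of those edges are left over. These leftovers are not a small ``fixed-point set'' to be enumerated as an afterthought --- analyzing them (together with the analogous path-swapping pairings) is exactly the case analysis that constitutes the paper's proof. So the proposal identifies the right framework but omits the decisive computation.
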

\begin{proof}
We orient $G$ to form $\mv{D}$ with 
the following out-neighborhoods:
$N^+(v_1)=\{v_2, v_3\}$,
$N^+(v_2)=\{w_2,v_4,v_5\}$,
$N^+(w_2)=\{v_1,w_4\}$,
$N^+(v_3)=\{v_2,w_2,w_4,v_5\}$,
$N^+(v_4)=\{v_1,v_3,v_5\}$, 
$N^+(w_4)=\{v_4,v_5\}$,
$N^+(v_5)=\{v_1\}$. See Figure \ref{fig:farlinked}.

We will show that $\diff(\mv{D})\ne 0$.  Since each vertex has at least two
in-edges, this proves that $G$ is $d_1$-paintable.  
Let $R=\{{v_3w_2},{v_3w_4}\}$.  For any nonempty subset $S$ of $R$, we
must have $\diff(\mv{D}\setminus R)=0$.  This is because each vertex on the
5-cycle has outdegree at most 3, so will get a list of size at most 4.  And
clearly, we cannot always color $K_5$ from lists of size at most 4.
Thus, it suffices to count the difference, when restricted to the set $A$ of
circulations $\mv{T}$ such that ${v_3w_2},{v_3w_4}\in \mv{T}$.

Let $\mv{T}$ be such a circulation.  
Note that $v_3v_2,v_3v_5\notin \mv{T}$, and thus $v_1v_3,v_4v_3\in \mv{T}$.
Now we consider the 8 possible subsets of $\{w_4v_4, w_4v_5, v_4v_5\}$ in
$\mv{T}$.
Clearly $d^+(w_4)\ge 1$ and $d^-(v_5)\le 1$.  Also,
we can pair the case $w_4v_4,v_4v_5\in \mv{T}$ and $w_4v_5\notin \mv{T}$ with the
case coming from its complement.  Thus, we can restrict to the case when
$w_4v_4\in \mv{T}$ and $v_4v_5\notin \mv{T}$ (and we're not specifying whether
$w_4v_5$ is in or out).  Now consider the directed triangle $v_1v_2,
v_2v_4,v_4v_1$.  We can pair the cases when all or none of these edges are in
$\mv{T}$.  Thus we may assume that either exactly 1 or exactly 2 of these edges
are in.  Considering indegree and outdegree of $v_2$ shows that we must
have $v_1v_2\in \mv{T}$ and $v_2v_4, v_4v_1\notin \mv{T}$.  This implies
$w_2v_1,v_5v_1\in \mv{T}$.
Now we have two ways to
complete $\mv{T}$.  We can have $v_2w_2,w_2w_4,w_4v_5\in \mv{T}$ and $v_2v_4\notin
\mv{T}$ or vice versa.  Each of these gives $|E(\mv{T})|$ odd; thus, we get
$|\diff(D)|=2$.
\end{proof}
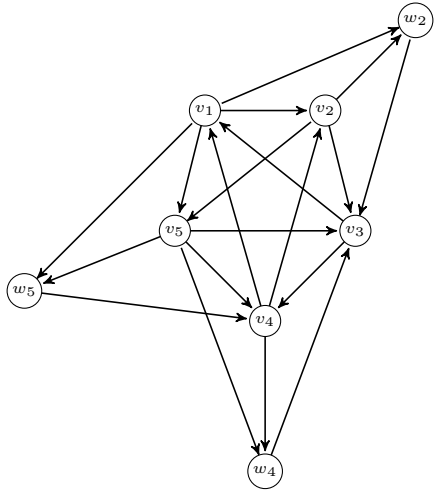
\begin{figure}[ht]
\begin{center}
\begin{tikzpicture}[scale = 8]
\tikzstyle{VertexStyle}=[shape = circle,	
								 minimum size = 6pt,
								 inner sep = 1.2pt,
                                 draw]
\Vertex[x = 0.75, y = 0.699999988079071, L = \tiny {$v_1$}]{v0}
\Vertex[x = 0.949999988079071, y = 0.699999988079071, L = \tiny {$v_2$}]{v1}
\Vertex[x = 1, y = 0.5, L = \tiny {$v_3$}]{v2}
\Vertex[x = 0.699999988079071, y = 0.5, L = \tiny {$v_5$}]{v3}
\Vertex[x = 0.850000023841858, y = 0.350000023841858, L = \tiny {$v_4$}]{v4}
\Vertex[x = 1.10000002384186, y = 0.849999994039536, L = \tiny {$w_2$}]{v5}
\Vertex[x = 0.850000023841858, y = 0.100000023841858, L = \tiny {$w_4$}]{v6}
\Vertex[x = 0.449999988079071, y = 0.399999976158142, L = \tiny {$w_5$}]{v7}
\Edge[style = {post}](v1)(v3)
\Edge[style = {pre}](v1)(v0)
\Edge[style = {pre}](v3)(v0)
\Edge[style = {post}](v0)(v5)
\Edge[style = {pre}](v0)(v2)
\Edge[style = {post}](v5)(v2)
\Edge[style = {pre}](v5)(v1)
\Edge[style = {pre}](v2)(v1)
\Edge[style = {pre}](v1)(v4)
\Edge[style = {pre}](v4)(v2)
\Edge[style = {pre}](v0)(v4)
\Edge[style = {post}](v0)(v7)
\Edge[style = {pre}](v4)(v7)
\Edge[style = {pre}](v4)(v3)
\Edge[style = {pre}](v7)(v3)
\Edge[style = {pre}](v2)(v3)
\Edge[style = {pre}](v2)(v6)
\Edge[style = {post}](v3)(v6)
\Edge[style = {pre}](v6)(v4)
\end{tikzpicture}
\end{center}
\caption{The orientation for Lemma \ref{3unlinked}.}
\label{fig:3unlinked}
\end{figure}
\begin{lemma}
\label{3unlinked}
Let $H$ be a 5-cycle $v_1,\ldots,v_5$ with pendant edges at $v_2$, $v_4$, and
$v_5$, leading to vertices $w_2$, $w_4$, and $w_5$, respectively. 
Let $G=H^2$; now $G$ is $d_1$-paintable.  
\end{lemma}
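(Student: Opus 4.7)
The plan is to mimic the strategy of Lemma~\ref{farlinked}: orient $G = H^2$ as a digraph $\mv{D}$ in which every vertex has in-degree at least $2$, so that the prescribed list sizes $f(v) = 1 + d^+_{\mv{D}}(v)$ satisfy $f(v) \le d_G(v) - 1$; then establish $\diff(\mv{D}) \ne 0$ and conclude $d_1$-paintability via \hyperref[AT-paint]{Theorem B}. The orientation in Figure~\ref{fig:3unlinked} meets the in-degree requirement: each of the five cycle vertices has exactly two of its $K_5$-neighbors as out-neighbors, and each of the three pendants $w_2, w_4, w_5$ has a single out-arc directed toward a cycle vertex.

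The key reduction uses the fact that $\{v_1,\ldots,v_5\}$ induces a $K_5$ in $G$. I would identify a set $R$ of arcs leaving cycle vertices and entering pendants (chosen from the six such arcs of $\mv{D}$) with the property that, for every nonempty $S \subseteq R$, the digraph $\mv{D}\setminus S$ has some cycle vertex whose outdegree drops enough that the $K_5$ cannot be $f$-list-colored. By the contrapositive of \hyperref[AT-paint]{Theorem B}, $\diff(\mv{D}\setminus S) = 0$ for every such $S$, and hence by inclusion-exclusion $\diff(\mv{D})$ equals the difference restricted to the set $A$ of circulations $\mv{T}$ that use every arc of $R$.

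Restricting to $A$, I would use in/out-degree balance to force many other arcs into or out of each candidate circulation $\mv{T}$: once an in-arc of a pendant is pinned by $R$, the pendant's unique out-arc is forced in as well; and at each cycle vertex, once a few arcs are committed, the remaining arcs are largely determined. Sign-reversing involutions --- typically toggling the three arcs of a directed triangle inside the $K_5$, or swapping two internally-disjoint paths of opposite parity between the same endpoints --- should then pair the bulk of the residual circulations in $A$ into parity-opposite pairs. What remains after pairing should be an explicit small set of circulations whose parities can be tallied to conclude that the restricted difference, and hence $\diff(\mv{D})$, is nonzero.

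The main obstacle is twofold. First, the choice of $R$: unlike in Lemma~\ref{farlinked}, here a single-arc removal need not destroy $f$-choosability of $K_5$ (Hall's condition still holds when one cycle vertex retains list size $5$), so $R$ must be chosen so that \emph{every} nonempty $S \subseteq R$ genuinely breaks choosability, perhaps by appealing also to the pendant-vertex lists in addition to the $K_5$ restriction. Second, the casework inside $A$: with eight vertices, three pendants, and six extra $K_5$-to-pendant arcs, many more arcs must be propagated and paired than in Lemma~\ref{farlinked}, and the involutions must be organized carefully so that the residual case split does not explode combinatorially before the final parity count.
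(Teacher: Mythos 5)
Your overall framework (Alon--Tarsi orientation, inclusion--exclusion to restrict the count, parity-reversing bijections, then \hyperref[AT-paint]{Theorem B}) is the right one, but the step on which everything hinges --- choosing a set $R$ of cycle-to-pendant \emph{arcs} with $\diff(\mv{D}\setminus S)=0$ for every nonempty $S\subseteq R$ --- does not go through, and the obstacle you flag at the end is fatal rather than technical. In the orientation of Figure~\ref{fig:3unlinked} both $v_1$ and $v_5$ have outdegree $4$, and each arc has a single tail, so deleting any single arc of $R$ lowers at most one of them; the surviving outdegree-$4$ vertex keeps list size $5$, the five cycle lists then have sizes $3,4,4,4,5$, Hall's condition holds on the $K_5$, and the ``cannot color $K_5$ from lists of size at most $4$'' argument gives you nothing, so you cannot conclude $\diff(\mv{D}\setminus S)=0$ for singleton $S$. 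Worse, arc deletion has no fallback: if some $\diff(\mv{D}\setminus S)\ne 0$, Theorem B only tells you that $G$ with some edges removed is paintable from \emph{smaller} lists, which neither implies that $G$ itself is $d_1$-paintable nor hands you an induced $d_1$-paintable subgraph of the ambient square, so the unwanted terms in the inclusion--exclusion cannot be discarded. Your proposed fix (``appealing also to the pendant-vertex lists'') is not developed, and the concluding parity count is only sketched, so the argument is incomplete exactly where the work lies.

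The paper avoids this by conditioning on the pendant \emph{vertices} rather than arcs: for each nonempty $S\subseteq\{w_2,w_4,w_5\}$, either $\diff(\mv{D}\setminus S)\ne 0$, in which case that vertex-deleted digraph is an induced subdigraph with all indegrees at least $2$ and is therefore itself $d_1$-paintable (which already finishes the job), or $\diff(\mv{D}\setminus S)=0$; this dichotomy is what legitimizes restricting to circulations with $d^+(w_2)=d^+(w_4)=d^+(w_5)=1$. After that the count is short: the three forced pendant arcs give $v_3v_1,v_3v_4\in\mv{T}$ and exclude $v_2v_3,v_5v_3$; the path swaps at $v_1$ (namely $v_1w_2$ versus $v_1v_2,v_2w_2$, and $v_1w_5$ versus $v_1v_5,v_5w_5$) pair off all but two configurations; and a brief case analysis yields $|\diff(\mv{D})|=1$. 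If you insist on an arc-based restriction in the spirit of Lemma~\ref{farlinked}, you would need to re-orient so that only one cycle vertex has outdegree $4$ with both of its pendant out-arcs forming $R$, and then redo the entire parity computation for that new orientation --- neither of which your proposal carries out.
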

\begin{proof}
We orient $G$ to form $\mv{D}$ with 
the following out-neighborhoods:
$N^+(v_1)=\{v_2, w_2, v_5, w_5\}$,  $N^+(v_2)=\{w_2,v_3,v_5\}$,
$N^+(w_2)=\{v_3\}$,
$N^+(v_3)=\{v_1,v_4\}$, $N^+(v_4)=\{v_1,v_2,w_4\}$, $N^+(w_4)=\{v_3\}$,
$N^+(v_5)=\{v_3,v_4,w_4,w_5\}$, $N^+(w_5)=\{v_4\}$.  See Figure \ref{fig:3unlinked}.

We will show that $\diff(\mv{D})\ne 0$.  Since each vertex has at least two
in-edges, this proves that $G$ is $d_1$-paintable.  If $\diff(\mv{D}-w_2)\ne 0$,
then we are done, since $\mv{D}-w_2$ is $d_1$-paintable.  
Thus, we can assume that $\diff(\mv{D}-w_2)=0$.  Similarly, we can assume that
$\diff(\mv{D}\setminus S)=0$ for every $S\subseteq \{w_2,w_4,w_5\}$.  Thus, it suffices
to count the difference, when restricted to the set $A$ of circulations such
that $d^+(w_2)=1$, $d^+(w_4)=1$, and
$d^+(w_5)=1$.  Let $\mv{T}$ be such a circulation.  So ${w_2v_3},
{w_4v_3}, {w_5v_4}\in \mv{T}$.  Now $d^+(v_3)=2$, so
${v_3v_1},{v_3v_4}\in \mv{T}$ and $v_2v_3,v_5v_3\notin \mv{T}$.  In
particular, $d^-(v_1)\ge 1$, so $d^+(v_1)\ge 1$.

Now we will pair some circulations in $A$ via a parity-reversing bijection.
Consider the paths ${v_1w_2}$ and ${v_1v_2},{v_2w_2}$.  If a
circulation contains all edges in one path and none in the other, then we can
pair it via a bijection.  The same is true for the paths ${v_1w_5}$ and
${v_1v_5},{v_5w_5}$.  Since $1\le d^+(v_1)\le 2$, and also
$d^-(w_2)=d^-(w_5)=1$, the only way that $\mv{T}$ can avoid these cases is if
either (i) ${v_1v_2},{v_1w_2}\in \mv{T}$ or (ii)
${v_1v_5},{v_1w_5}\in \mv{T}$.  Before we consider these cases, note
that in each case ${v_4v_1}\in \mv{T}$. 

Case (i): 
Now we must have $v_1w_5,v_1v_5\notin \mv{T}$.
Note that ${v_2w_2}\notin \mv{T}$, which implies
${v_4v_2}\notin \mv{T}$.  Also ${v_2v_5}\in \mv{T}$.  Further,
$d^-(w_5)=1$ implies ${v_5w_5}\in \mv{T}$, which in turn yields
${v_5v_4}, {v_5w_4}\notin \mv{T}$.  Finally, ${v_4w_4}\in \mv{T}$. 
Thus, we have a unique $\mv{T}$ (with an odd number of edges).

Case (ii): 
Now we must have $v_1w_2,v_1v_2\notin \mv{T}$ and also $v_5w_5\notin
\mv{T}$.
Note that ${v_2w_2}\in \mv{T}$, which implies that
${v_4v_2}\in \mv{T}$ and also that ${v_2v_5}\notin \mv{T}$.  Now we
get that either (a) ${v_5v_4}\in \mv{T}$, and thus ${v_4w_4}\in
\mv{T}$ and ${v_5w_4}\notin \mv{T}$ or else (b) ${v_5w_4}\in \mv{T}$
and ${v_5v_4}, {v_4w_4}\notin \mv{T}$.  Again, by a parity-reversing
bijection, we see that together these circulations contribute nothing to
$\diff(A)$ (in fact there is only one of each).  Now combining Cases (i) and
(ii), we get that $|\diff(A)|=1$, and in fact $|\diff(\mv{D})|=1$.  Thus,
$G$ is $d_1$-paintable.
\end{proof}
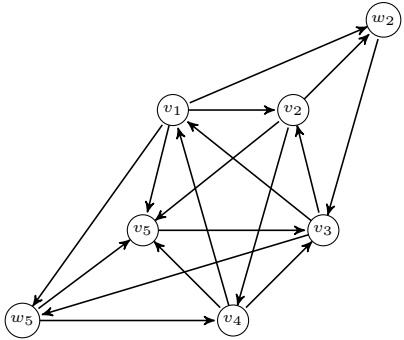
\begin{figure}[ht]
\begin{center}
\begin{tikzpicture}[scale = 8]
\tikzstyle{VertexStyle}=[shape = circle,	
								 minimum size = 6pt,
								 inner sep = 1.2pt,
                                 draw]
\Vertex[x = 0.75, y = 0.799999997019768, L = \tiny {$v_1$}]{v0}
\Vertex[x = 0.949999988079071, y = 0.799999997019768, L = \tiny {$v_2$}]{v1}
\Vertex[x = 1, y = 0.599999994039536, L = \tiny {$v_3$}]{v2}
\Vertex[x = 0.699999988079071, y = 0.599999994039536, L = \tiny {$v_5$}]{v3}
\Vertex[x = 0.850000023841858, y = 0.449999988079071, L = \tiny {$v_4$}]{v4}
\Vertex[x = 1.10000002384186, y = 0.949999999254942, L = \tiny {$w_2$}]{v5}
\Vertex[x = 0.5, y = 0.449999988079071, L = \tiny {$w_5$}]{v6}
\Edge[style = {post}](v1)(v3)
\Edge[style = {pre}](v1)(v0)
\Edge[style = {pre}](v3)(v0)
\Edge[style = {post}](v0)(v5)
\Edge[style = {pre}](v0)(v2)
\Edge[style = {post}](v5)(v2)
\Edge[style = {pre}](v5)(v1)
\Edge[style = {post}](v2)(v1)
\Edge[style = {post}](v1)(v4)
\Edge[style = {post}](v4)(v2)
\Edge[style = {pre}](v0)(v4)
\Edge[style = {post}](v0)(v6)
\Edge[style = {pre}](v4)(v6)
\Edge[style = {post}](v4)(v3)
\Edge[style = {post}](v6)(v3)
\Edge[style = {pre}](v2)(v3)
\Edge[style = {pre}](v6)(v2)
\end{tikzpicture}
\end{center}
\caption{The orientation for Lemma \ref{B1B2}.}
\label{fig:B1B2}
\end{figure}
\begin{lemma}
\label{B1B2}
Let $H$ be a 5-cycle $v_1,\ldots,v_5$ with pendant edges at $v_2$ and $v_5$, 
leading to vertices $w_2$ and $w_5$, respectively, and let $w_5$ and $v_3$ 
have a common neighbor $x$ (off the cycle).
Let $G=H^2-x$; now $G$ is $d_1$-paintable.
\end{lemma}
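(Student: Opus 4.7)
The plan is to apply Schauz's extension of Alon--Tarsi (Theorem~B) to the orientation $\mv{D}$ shown in Figure~\ref{fig:B1B2}, which has out-neighborhoods
$N^+(v_1)=\{v_2,v_5,w_2,w_5\}$,
$N^+(v_2)=\{v_4,v_5,w_2\}$,
$N^+(v_3)=\{v_1,v_2,w_5\}$,
$N^+(v_4)=\{v_1,v_3,v_5\}$,
$N^+(v_5)=\{v_3\}$,
$N^+(w_2)=\{v_3\}$, and
$N^+(w_5)=\{v_4,v_5\}$.
A direct check shows that every vertex has in-degree at least $2$, so $f(v):=1+d^+(v)\le d(v)-1$ for all $v$; it therefore suffices to show $\diff(\mv{D})\neq 0$.

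I would first carry out the inclusion-exclusion reduction used in Lemmas~\ref{farlinked} and~\ref{3unlinked}. Both $\mv{D}-w_2$ and $\mv{D}-w_5$ contain the oriented $K_5$ on $\{v_1,\dots,v_5\}$, in which $v_5$ retains out-degree only $1$; such a sub-digraph would require coloring $K_5$ with a list of size $2$ at $v_5$, which is impossible, so by the contrapositive of Theorem~B we get $\diff(\mv{D}-w_2)=\diff(\mv{D}-w_5)=\diff(\mv{D}-\{w_2,w_5\})=0$. Inclusion-exclusion then reduces the problem to the set $A$ of circulations $\mv{T}$ with $d^+(w_2)\ge 1$ and $d^+(w_5)\ge 1$. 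Since $w_2\to v_3$ is the only out-edge at $w_2$, that edge is forced in $\mv{T}$, and at least one of $w_5\to v_4, w_5\to v_5$ lies in $\mv{T}$ as well.

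Next I would count $\diff(A)$ via parity-reversing bijections combined with careful flow tracking, again following the template of Lemmas~\ref{farlinked} and~\ref{3unlinked}. Flow conservation at $w_2$ forces $d^-(w_2)=1$, so exactly one of $v_1\to w_2, v_2\to w_2$ lies in $\mv{T}$; a similar analysis at $w_5$ couples its two in-edges $v_1\to w_5, v_3\to w_5$ to its out-edges. The resulting short alternatives, together with the directed triangles inside the oriented $K_5$, provide parity-reversing bijections that cancel the bulk of $A$. The few surviving circulations can then be enumerated by tracking the forced edges through $v_3$ (whose out-degree budget of $3$ is quickly saturated), $v_5$ (which has only the single out-edge $v_5\to v_3$), and the remaining edges among $v_1,v_2,v_4$. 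A direct parity count on this residue should yield $|\diff(\mv{D})|\neq 0$.

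The main obstacle is the case-analysis bookkeeping in the last step. Once $w_2\to v_3\in\mv{T}$ is forced and flow constraints have propagated through $v_5$ and $w_5$, the remaining freedom lies in how the out-flow of $v_3$ is distributed among $v_3\to v_1, v_3\to v_2, v_3\to w_5$ and how this interacts with the triangle $v_1v_2v_4$ inside the $K_5$. The orientation in Figure~\ref{fig:B1B2} is tailored so that $v_5$ is a near-sink (out-degree $1$) and $v_3$ is heavily in-loaded, which funnels most circulations into a small number of configurations and keeps the residual count tractable; I expect all surviving circulations to share the same parity, completing the proof.
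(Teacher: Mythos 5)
Your setup coincides exactly with the paper's: you chose the identical orientation $\mv{D}$ (the one in Figure~\ref{fig:B1B2}), verified that every vertex has indegree at least $2$, and reduced via inclusion--exclusion to the set $A$ of circulations with positive degree at $w_2$ and $w_5$, using the fact that $\mv{D}\setminus S$ contains a $5$-clique whose vertices all end up with lists of size at most $4$, so $\diff(\mv{D}\setminus S)=0$ for every nonempty $S\subseteq\{w_2,w_5\}$. All of that is sound and is precisely how the paper begins.

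The genuine gap is that the decisive step --- actually computing the restricted difference over $A$ and showing it is nonzero --- is never carried out; you only assert that parity-reversing bijections ``should'' cancel the bulk of $A$ and that you ``expect'' the survivors to share a parity. For a lemma of this type that computation \emph{is} the proof: a badly chosen orientation with all the same degree properties can easily give $\diff=0$, so nothing short of the explicit count certifies the claim. Concretely, after forcing $w_2v_3\in\mv{T}$ and pairing off circulations containing all of one of the paths $v_1w_2$ and $v_1v_2,v_2w_2$ and none of the other, one must split into the two cases $v_1w_2,v_1v_2\in\mv{T}$, $v_2w_2\notin\mv{T}$ and $v_2w_2\in\mv{T}$, $v_1w_2,v_1v_2\notin\mv{T}$, and then trace the forced edges and the analogous bijections at $w_5$ through several subcases. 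When this is done, the surviving circulations do \emph{not} all have the same parity: the first case yields two even and three odd circulations (net $-1$ after cancellation within its subcases), the second yields equally many of each, and the lemma holds because the totals differ by exactly one. So your anticipated mechanism for the conclusion is not what actually happens, and without the explicit enumeration the argument does not establish $\diff(\mv{D})\ne 0$.
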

\begin{proof}
We orient $G$ to form $\mv{D}$ with 
the following out-neighborhoods:
$N^+(v_1)=\{v_2, w_2, v_5, w_5\}$,
$N^+(v_2)=\{w_2,v_4,v_5\}$,
$N^+(w_2)=\{v_3\}$,
$N^+(v_3)=\{v_1, v_2, w_5\}$,
$N^+(v_4)=\{v_1,v_3,v_5\}$, 
$N^+(v_5)=\{v_3\}$, 
$N^+(w_5)=\{v_4,v_5\}$.  See Figure \ref{fig:B1B2}.

We will show that $\diff(\mv{D})\ne 0$.  Since each vertex has at least two
in-edges, this proves that $G$ is $d_1$-paintable.  
Note that for each nonempty subset $S\subseteq \{w_2,w_5\}$, we have
$\diff(\mv{D}\setminus S)=0$, since otherwise we can color the corresponding
subgraph from lists of size 4, even though it contains a 5-clique.
So by inclusion-exclusion, we can restrict our count of $\diff$ to the
set of circulations $A$ where $w_2$ and $w_5$ each have positive indegree.  
Consider the paths ${v_1w_2}$ and ${v_1v_2},{v_2w_2}$.  
Let $\mv{T}$ be a circulation in $A$.
If $T$ contains all edges of one path and none of the other, then we can pair it
via a parity-reversing bijection.  So we assume we are not in these situations.
 Since $w_2$ has positive indegree,
and hence indegree 1, we either have 
(i) ${v_1w_2},{v_1v_2}\in \mv{T}$ and ${v_2w_2}\notin \mv{T}$ 
or 
(ii) ${v_2w_2}\in \mv{T}$ and ${v_1w_2},{v_1v_2}\notin \mv{T}$.

Case (i): ${v_1w_2},{v_1v_2}\in \mv{T}$ and ${v_2w_2}\notin \mv{T}$.
Clearly ${w_2v_3}\in \mv{T}$.  Since $d^+(v_1)=2$, we have
${v_3v_1},{v_4v_1}\in \mv{T}$ and ${v_1v_5},{v_1w_5}\notin
\mv{T}$.  Suppose ${v_3v_2}\in \mv{T}$. Now also ${v_2v_4},{v_2v_5},
{v_5v_3}\in \mv{T}$.  Finally, since $w_5$ has positive indegree,
${v_3w_5},{w_5v_4},{v_4v_3}\in \mv{T}$.  The resulting
circulation is even.  Suppose instead that ${v_3v_2}\notin \mv{T}$.  If
${v_2v_5}\in \mv{T}$, then we get ${v_5v_3}, {v_3w_5}, {w_5v_4}\in \mv{T}$.
The resulting circulation is odd.  If instead ${v_2v_5}\notin \mv{T}$ and
${v_2v_4}\in \mv{T}$, then we have three possibilities to ensure
$d^+(w_5)>0$.  Either ${v_3w_5},{w_5v_4},{v_4v_5},{v_5v_3}\in
\mv{T}$ or ${v_3w_5}, {w_5v_4},{v_4v_3}\in \mv{T}$ or
${v_3w_5},{w_5v_5},{v_5v_3}\in \mv{T}$.  Two of the resulting
circulations are odd and one is even.  Thus in total for Case (i), we have one
more odd circulation than even.

Case (ii): ${v_2w_2}\in \mv{T}$ and ${v_1w_2},{v_1v_2}\notin \mv{T}$.
We have ${v_2w_2}\in \mv{T}$, which implies ${w_2v_3}\in
\mv{T}$ and ${v_3v_2}\in \mv{T}$.  This further yields
${v_2v_4},{v_2v_5}\notin \mv{T}$.
Again we will pair some of the
circulations in $A$ via a parity-reversing bijection.
Consider the paths ${v_3w_5}$ and ${v_3v_1},{v_1w_5}$.  If a
circulation contains all edges in one path and none in the other, then we can
pair it via a bijection.  Since $1\le d^-(w_5)$, the only way that $\mv{T}$
can avoid these cases is if
either (a) ${v_1w_5}\in \mv{T}$ and ${v_3v_1}\notin \mv{T}$
or (b) ${v_3v_1}\in \mv{T}$ and ${v_1w_5}\notin \mv{T}$ (and thus
${v_3w_5}\in \mv{T}$ or (c) ${v_3v_1}, {v_1w_5},{v_3w_5} \in
\mv{T}$.  Consider (a).  ${v_1w_5}\in \mv{T}$ implies ${v_4v_1}\in
\mv{T}$, and thus ${w_5v_4}\in \mv{T}$.  We also have the option of
all or none of ${v_3w_5},{w_5v_5},{v_5v_3}$ in $\mv{T}$.  One of the
resulting circulations is odd and the other is even.  Consider (b).  Now
${v_3v_1}\in \mv{T}$ and ${v_1w_5}\notin \mv{T}$ imply ${v_1v_5}\in
\mv{T}$, and thus ${v_5v_3}\in \mv{T}$.  Now $d^+(w_5)>0$ implies
${v_3w_5}, {w_5v_4},{v_4v_3}\in \mv{T}$.  The resulting circulation is
odd.  Consider (c).  Now we get ${w_5v_5}\in \mv{T}$, which implies
${v_5v_3}\in \mv{T}$.  We also get ${w_5v_4}\in \mv{T}$, which
implies $\mv{v_4v_3}\in \mv{T}$.  The resulting circulation is even.  Thus
in total for Case (ii), we have the same number of even and odd circulations.

So combining Cases (i) and (ii), we have one more odd circulation than even.
Thus $\diff(\mv{D})\ne 0$, so $G$ is $d_1$-paintable.
\end{proof}

Form $\mv{P_n}$ from $(P_n)^2$ by orienting all edges from left to right. 
Number the vertices as $v_1,\ldots, v_n$ from left to right.  
A subgraph $\mv{T}\subseteq \mv{P_n}$ is \emph{weakly eulerian} if each vertex
$w\notin\{v_1,v_n\}$ satisfies $d^+(w)=d^-(w)$ and 
$d^+(v_1)=d^-(v_n)=i$ for some $i\in\{1,2\}$.
Let $EE_i(\mv{P_n})$ (resp. $EO_i(\mv{P_n}$))
denote the set of even (resp. odd) weakly eulerian subgraphs where
$d^+(v_1)=d^-(v_n)=i$.  Finally, let $f_i(n)=|EE_i(\mv{P_n})|-|EO_i(\mv{P_n})|$.
We will not apply the following lemma directly to find $d_1$-paintable
subgraphs.  However, it will be helpful in the proof for the remaining
$d_1$-paintable graph, which includes cycles of arbitrary length.

\begin{lemma}
If $n=3k+j$ for some positive integer $k$ and $j\in\{-1,0,1\}$,
then $f_1(n)=j$ and for $n\ge 4$ also $f_2(n)=-f_1(n-2)$, with $f_i(n)$ as
defined above.
\label{path-lemma}
\end{lemma}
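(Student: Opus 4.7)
My plan is to prove both identities by induction using a common strategy: set up linear recurrences for $f_1(n)$ and $f_2(n)$ by a case analysis on the edges incident to $v_1$ and $v_2$, then match these against the known base cases. I would first enumerate directly for small $n \le 5$ to obtain $f_1(2) = -1$, $f_1(3) = 0$, $f_1(4) = 1$, $f_2(4) = 1$, and $f_2(5) = 0$.

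For the $f_1$ recurrence, fix a weakly eulerian $T$ on $\mv{P_n}$ with $d^+(v_1) = 1$. Since $v_2$ must be balanced, the edges at $v_1,v_2$ fall into three exhaustive cases: (A) $v_1v_3 \in T$ (forcing $v_2$ to be isolated in $T$); (B1) $v_1v_2, v_2v_3 \in T$; (B2) $v_1v_2, v_2v_4 \in T$ (forcing $v_3$ to be isolated). In (A) and (B1), restricting $T$ to $\{v_3,\ldots,v_n\}$ yields a bijection with $f_1$-type subgraphs of $\mv{P_{n-2}}$, contributing $-f_1(n-2)$ and $+f_1(n-2)$ respectively (the signs coming from the parity of the removed prefix); in (B2), restricting to $\{v_4,\ldots,v_n\}$ bijects with $f_1$-type subgraphs of $\mv{P_{n-3}}$, contributing $+f_1(n-3)$. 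Summing gives $f_1(n) = f_1(n-3)$ for $n \ge 5$, which with the base cases immediately yields $f_1(n) = j$ for $n = 3k+j$.

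The $f_2$ case is parallel but trickier. Both $v_1v_2$ and $v_1v_3$ lie in $T$, and the remaining split is: (I) $v_2v_3 \in T$, which forces $v_3v_4, v_3v_5 \in T$ and makes restriction to $\{v_3,\ldots,v_n\}$ an $f_2$-type subgraph of $\mv{P_{n-2}}$ (contribution $-f_2(n-2)$); or (II) $v_2v_4 \in T$ with $v_2v_3 \notin T$. The main obstacle is case (II): the naive restriction $T'$ to $\{v_3,\ldots,v_n\}$ is not weakly eulerian, since both $v_3$ and $v_4$ end up with $d^+ - d^- = 1$. The key move is to prepend a new vertex $w$ and add edges $wv_3,wv_4$; the augmented graph is isomorphic to $\mv{P_{n-1}}$ (via $w, v_3, v_4, \ldots$ playing the roles of $x_1, x_2, x_3, \ldots$), and $T' \cup \{wv_3, wv_4\}$ becomes an $f_2$-type subgraph of it. Checking that this map is a bijection and tracking parities shows it contributes $-f_2(n-1)$, so $f_2(n) = -f_2(n-1) - f_2(n-2)$ for $n \ge 4$.

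To finish, the base values $f_1(2)+f_1(3)+f_1(4) = 0$ together with the period-three recurrence $f_1(n) = f_1(n-3)$ imply that $f_1$ also satisfies $f_1(n) = -f_1(n-1) - f_1(n-2)$ for $n \ge 4$. Thus the sequence $n \mapsto -f_1(n-2)$ satisfies the same second-order linear recurrence as $f_2$, and matching initial values $f_2(4) = -f_1(2) = 1$ and $f_2(5) = -f_1(3) = 0$ gives $f_2(n) = -f_1(n-2)$ for all $n \ge 4$ by induction. The nonobvious ingredient throughout is the virtual-vertex trick in case (II), which converts an unbalanced two-source flow on $\mv{P_{n-2}}$ into a clean $f_2$-type structure on $\mv{P_{n-1}}$ and thereby closes the recurrence.
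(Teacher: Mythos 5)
Your proposal is correct, and for the first identity it is essentially the paper's argument: your three-case split at $v_1,v_2$, in which the $-f_1(n-2)$ and $+f_1(n-2)$ contributions from cases (A) and (B1) cancel, is exactly the paper's parity-reversing swap between the paths $v_1v_3$ and $v_1v_2,v_2v_3$, leaving the $f_1(n-3)$ term and the same base values $f_1(2)=-1$, $f_1(3)=0$, $f_1(4)=1$. For $f_2(n)=-f_1(n-2)$, however, you take a genuinely different route. The paper proves it in one stroke: since $\mv{P_n}$ has $2n-3$ edges (an odd number), complementing inside $\mv{P_n}$ carries each $f_2$-type subgraph to an $f_1$-type subgraph on the interior path $v_2,\ldots,v_{n-1}$ while reversing parity, so no recurrence and no $f_2$ base cases are needed. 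Your alternative — deriving $f_2(n)=-f_2(n-1)-f_2(n-2)$ by the case split at $v_2$, with the virtual-vertex trick (prepending $w$ with edges $wv_3,wv_4$ so the two-source configuration of case (II) becomes an $f_2$-object on a copy of $\mv{P_{n-1}}$), then observing that $-f_1(n-2)$ satisfies the same second-order recurrence because any three consecutive values of $f_1$ sum to zero, and matching at $n=4,5$ — checks out: the maps in cases (I) and (II) are bijections onto the relevant $f_2$-objects, the parity bookkeeping (three edges removed in (I), net change of one edge in (II)) gives the stated signs, and $f_2(4)=1$, $f_2(5)=0$ are correct. The paper's complementation buys brevity and exploits the global symmetry of $\mv{P_n}$; your route is more computational but purely local (surgery near $v_1,v_2$), at the cost of extra base-case verification and the bijection check for the augmented graph.
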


\begin{proof}
Rather than directly counting weakly eulerian subgraphs, we again use a
parity-reversing bijection.
We first prove that $f_2(n)=-f_1(n-2)$.  The complement of
each $\mv{D}\in EE_2(\mv{P_n})\cup EO_2(\mv{P_n})$ has
$d^+(v_2)=d^-(v_{n-1})=1$ and $d^+(w)=d^-(w)$ for each
$w\notin\{v_1,v_2,v_{n-1},v_n\}$ (and
$d^+(v_1)=d^-(v_n)=d^-(v_2)=d^+(v_{n-1})=0$). 
Since $\mv{P_n}$ has $2n-3$ edges, 
each digraph has parity opposite its complement;
so $f_2(n)=-f_1(n-2)$.

Now we determine $f_1(n)$.
Let $\mv{T}$ be a weakly eulerian subgraph with $d^+(v_1)=1$.  
Consider the directed paths $v_1v_3$ and $v_1v_2,v_2v_3$.
If $\mv{T}$ contains all of one path and none of the other, then we can pair
$\mv{T}$ with its complement, which has opposite parity.
If neither of these cases holds, then we must have
${v_1v_2},{v_2v_4}\in \mv{T}$ and ${v_1v_3},{v_2v_3}\notin
\mv{T}$.  
This yields $f_1(n)=f_1(n-3)$.  It remains only to check that
$f_1(2)=-1$, $f_1(3)=0$, and $f_1(4)=1$.
\end{proof}

\begin{figure}[ht]
\begin{center}
\begin{tikzpicture}[scale = 8]
\tikzstyle{VertexStyle}=[shape = circle,	
								 minimum size = 6pt,
								 inner sep = 1.2pt,
                                 draw]
\Vertex[x = 1.25, y = 0.699999988079071, L = \tiny {$v_1$}]{v0}
\Vertex[x = 1.45000004768372, y = 0.550000011920929, L = \tiny {$v_2$}]{v1}
\Vertex[x = 1.04999995231628, y = 0.550000011920929, L = \tiny {$v_7$}]{v2}
\Vertex[x = 1.5, y = 0.350000023841858, L = \tiny {$v_3$}]{v3}
\Vertex[x = 1, y = 0.350000023841858, L = \tiny {$v_6$}]{v4}
\Vertex[x = 1.39999997615814, y = 0.149999976158142, L = \tiny {$v_4$}]{v5}
\Vertex[x = 1.25, y = 0.849999994039536, L = \tiny {$u$}]{v6}
\Vertex[x = 1.10000002384186, y = 0.149999976158142, L = \tiny {$v_5$}]{v7}
\Edge[style = {post}](v0)(v1)
\Edge[style = {post}](v0)(v3)
\Edge[style = {post}](v0)(v6)
\Edge[style = {post}](v1)(v3)
\Edge[style = {post}](v1)(v5)
\Edge[style = {post}](v1)(v6)
\Edge[style = {post}](v2)(v0)
\Edge[style = {post}](v2)(v1)
\Edge[style = {post}](v3)(v5)
\Edge[style = {post}](v3)(v7)
\Edge[style = {post}](v4)(v2)
\Edge[style = {post}](v5)(v4)
\Edge[style = {post}](v5)(v7)
\Edge[style = {post}](v6)(v2)
\Edge[style = {post}](v7)(v2)
\Edge[style = {post}](v7)(v4)
\Edge[style = {post}](v4)(v0)
\end{tikzpicture}
\end{center}
\caption{The orientation for Lemma \ref{cycle+pendant} with $n=7$.}
\label{fig:cycle+pendant}
\end{figure}
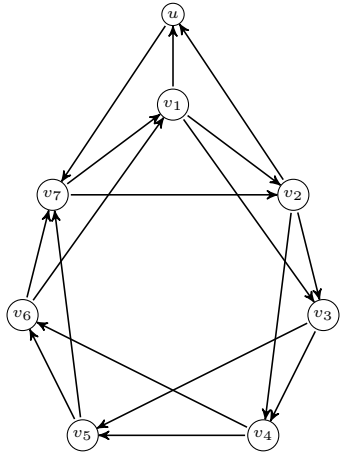

\begin{lemma}
\label{cycle+pendant}
Cycle + one pendant edge:
Let $J_n$ consist of an $n$-cycle on vertices $v_1,\ldots,v_n$ (in clockwise
order) with a pendant edge at $v_1$ leading to vertex $u$.  Form $\mv{D_n}$
by squaring $J_n$ and orienting the edges as follows. 
Orient edges ${v_iv_{i+1}}$ and ${v_iv_{i+2}}$ away from $v_i$ (with
subscripts modulo $n$).  Orient ${uv_n}$ away from $u$ and ${v_1u}$
and ${v_2u}$ toward $u$.  We will show that $\diff(\mv{D_n})\ne 0$
when $n\not\equiv 2 \bmod 3$ (or else $f(\mv{D_n}-u)\not\equiv 0$). 
\end{lemma}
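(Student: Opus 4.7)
The plan is to apply Theorem~B by showing $\diff(\mv{D_n})\ne 0$. First note that the out-degrees in $\mv{D_n}$ satisfy $d^+(v)\le d_{J_n^2}(v)-2$ at every vertex (internal cycle vertices have degree $4$ and out-degree $2$; the cycle vertices $v_1,v_2,v_n$ touching the pendant have degree $5$ and out-degrees $3,3,2$ respectively; and $u$ has degree $3$ and out-degree $1$), so a nonvanishing difference gives $d_1$-paintability.

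I would then split circulations of $\mv{D_n}$ by what happens at $u$. Because $u$ has the lone out-edge $uv_n$ and the in-edges $v_1u,v_2u$, any circulation $T$ falls into exactly one of three types: \emph{(X)} no $u$-edge in $T$; \emph{(Y)} $\{v_1u,uv_n\}\subseteq T$ and $v_2u\notin T$; or \emph{(Z)} $\{v_2u,uv_n\}\subseteq T$ and $v_1u\notin T$. Next I would introduce a parity-reversing bijection obtained by simultaneously toggling the three edges $v_1u$, $v_1v_2$, $v_2u$. A direct balance check at $v_1,v_2,u$ shows that this toggle preserves circulation precisely when $v_1u$ and $v_2u$ are flipped in the same direction and $v_1v_2$ in the opposite direction, and it reverses parity because three edges are toggled. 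Thus it pairs every $(Y)$-circulation with $v_1v_2\notin T$ against a $(Z)$-circulation with $v_1v_2\in T$, so these cancel. What survives is the contribution of $(X)$, of $(Y)$ with $v_1v_2\in T$, and of $(Z)$ with $v_1v_2\notin T$.

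Writing $T'=T\cap E(\mv{C_n^2})$, the parity of $T$ matches that of $T'$ in the surviving cases (the $(Y)$ and $(Z)$ pieces each contribute two extra $u$-edges), so the question reduces to signed counts on $\mv{C_n^2}$: case $(X)$ counts cycle circulations; the surviving $(Y)$ case counts subgraphs with $v_1v_2\in T'$, imbalance $-1$ at $v_1$, and $+1$ at $v_n$; and the surviving $(Z)$ case counts subgraphs with $v_1v_2\notin T'$, imbalance $-1$ at $v_2$, and $+1$ at $v_n$. I would then cut $\mv{C_n^2}$ at the seam between $v_n$ and $v_1$ so that the residue $S'\subseteq \mv{P_n^2}$ lives in the squared path of Lemma~\ref{path-lemma}. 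The three back-edges $v_{n-1}v_1$, $v_nv_1$, $v_nv_2$ are toggled by bits $b_1,b_2,b_3\in\{0,1\}$, each choice pinning the imbalance of $S'$ at the four boundary vertices $v_1,v_2,v_{n-1},v_n$. Summing with signs $(-1)^{b_1+b_2+b_3}$, each of the three pieces collapses into a short signed combination of $f_1(n')$ and $f_2(n')$ for $n'$ near $n$; since Lemma~\ref{path-lemma} gives $f_1(n)\in\{-1,0,+1\}$ according to $n\bmod 3$, the total $\diff(\mv{D_n})$ is nonzero precisely when $n\not\equiv 2\pmod 3$, and in the remaining residue class the $(X)$-piece by itself (which is $\diff(\mv{D_n}-u)$) is already nonzero, giving the ``or else'' clause.

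The main obstacle will be the bookkeeping of the cut. Eight back-edge subcases, three circulation types, and four boundary-imbalance equations produce a bulky case analysis, and matching these to the two-boundary statistics $f_1,f_2$ may either require a mild generalization of Lemma~\ref{path-lemma} that allows two left-sources and two right-sinks, or a second parity-reversing bijection at the seam (for example toggling $v_nv_2$, $v_nv_1$, $v_1v_2$, whose balance check works symmetrically to the $u$-bijection) that collapses the boundary to a single source and single sink before invoking the path lemma.
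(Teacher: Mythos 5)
Your opening move is sound and is in fact the paper's own first step: restricting to how a circulation meets $u$ and cancelling, via the toggle on $\{v_1u,\,v_1v_2,\,v_2u\}$, every circulation containing exactly one of the directed paths $v_1u$ and $v_1v_2,v_2u$; your surviving cases ((Y) with $v_1v_2\in\mv{T}$ and (Z) with $v_1v_2\notin\mv{T}$) are precisely the paper's cases (ii) and (i). But from there the proposal stops short of a proof. The reduction of the surviving cases to the path statistics $f_1,f_2$ is only asserted: you propose cutting the cycle at the seam and summing with signs over the eight subsets of $\{v_{n-1}v_1,v_nv_1,v_nv_2\}$, while conceding this may need a strengthened version of Lemma~\ref{path-lemma} (two sources and two sinks) or further seam bijections, and you never produce the resulting expression. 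That computation is the entire content of the lemma. The paper gets it by exploiting forced edges inside each case (in case (i), $v_2u\in\mv{T}$ forces $v_nv_2\in\mv{T}$ and $v_2v_3,v_2v_4\notin\mv{T}$, so the triangle on $u,v_n,v_2$ can be deleted and the contribution is $-1+f_1(n-1)$; case (ii) splits on whether $v_nv_2\in\mv{T}$ and contributes $f_1(n-2)-f_2(n)$), arriving at $-1+f_1(n-1)-2f_1(n-2)$, which is exactly where the condition $n\not\equiv 2\pmod 3$ comes from.

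More seriously, your endgame logic is wrong. You keep the circulations avoiding $u$ (your piece (X), which equals $\diff(\mv{D_n}-u)$) inside the total, claim the total is nonzero precisely when $n\not\equiv 2\pmod 3$, and claim that for $n\equiv 2\pmod 3$ the (X)-piece alone is nonzero, ``giving the or-else clause.'' That last claim is false: for $3\nmid n$ the independence number of $C_n^2$ is $\lfloor n/3\rfloor$, so $\chi(C_n^2)\ge 4$, while every vertex of $\mv{D_n}-u$ has out-degree $2$; if $\diff(\mv{D_n}-u)\ne 0$, Theorem A would make $C_n^2$ $3$-choosable, a contradiction (for $n=5$ the graph is even $K_5$). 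So the (X)-piece vanishes for every $n\equiv 2\pmod 3$ and cannot rescue that residue class. The or-else clause plays the opposite role: it disposes of the a priori possibility that $\diff(\mv{D_n}-u)\ne 0$ (in which case $\mv{D_n}-u$ is already $d_1$-AT and we are done), and only under the assumption $\diff(\mv{D_n}-u)=0$ does the total difference equal the restricted count you are trying to evaluate. Without that dichotomy, your ``precisely when'' statement about the total presumes a value of the (X)-piece that you never compute; with it, what remains to be shown is exactly the $-1+f_1(n-1)-2f_1(n-2)$ evaluation that your proposal leaves as future bookkeeping.
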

\begin{proof}
Form $\mv{D_n}$ as in the lemma.
We will show that $\diff(\mv{D_n})\ne 0$, and thus $J^2_n$ is $d_1$-paintable.
We may assume that $\diff(\mv{D_n-u})\ne 0$, 
for otherwise $\mv{D_n-u}$ is $d_1$-paintable.  Thus, restricting
our count to the set $A$ of circulations with $d^+(u)=1$ does not affect the
difference.
Let $\mv{T}$ be a circulation in $A$.
Consider the directed paths $v_1u$ and $v_1v_2, v_2u$.
If $\mv{T}$ contains all edges of one path and none of the other, then we can
pair $\mv{T}$ via a parity-reversing bijection.  So we assume we are not in one
of those cases.  Clearly $\mv{T}$ contains $\mv{uv_n}$ and exactly one of
${v_1u}$ and ${v_2u}$.  Thus either 
(i) ${v_2u}\in \mv{T}$ and ${v_1u},{v_1v_2}\notin\mv{T}$ 
or 
(ii) ${v_1u},{v_1v_2}\in \mv{T}$ and ${v_2u}\notin\mv{T}$.

Case (i): ${v_2u}\in \mv{T}$ and ${v_1u},{v_1v_2}\notin\mv{T}$.
Since ${v_2u}\in \mv{T}$ and ${v_1v_2}\notin \mv{T}$, we must have
${v_nv_2}\in \mv{T}$ and ${v_2v_3}, {v_2v_4} \notin \mv{T}$.  By
removing edges ${uv_n},{v_nv_2},{v_2u}$, we see that these circulations
are in bijection with the circulations in $\mv{D}_n-u-v_2$ (with the parity of
each subgraph reversed).  If we exclude the empty graph, these circulations are
in bijection with those counted by $f_1(n-1)$, since $d^+(v_1)=1$ and
$d^-(v_3)=1$.
Adding 1 for the empty subgraph, this difference is $1-f_1(n-1)$, and when we
account for removing edges $uv_n,v_nv_2,v_2u$, the difference is $-1+f_1(n-1)$.

Case (ii): ${v_1u},{v_1v_2}\in \mv{T}$ and ${v_2u}\notin\mv{T}$.
Since ${v_1u},{v_1v_2}\in\mv{T}$, we must have ${v_{n-1}v_1},
{v_nv_1}\in \mv{T}$ and ${v_1v_3}\notin\mv{T}$.
After removing edges ${v_nv_1},{v_1u},{uv_n}$, we see that these
circulations are in bijection with the circulations in
$\mv{D}_n-u-{v_nv_1}-{v_1v_3}$ that contain edges 
${v_{n-1}v_1}$ and ${v_1v_2}$.  We will count the difference of these even
and odd circulations, then multiply the total by $-1$ (to account for
removing edges ${v_1u},{uv_n},{v_nv_1}$) before adding to the total
above.

We consider two subcases:
${v_nv_2}\notin\mv{T}$ and ${v_nv_2}\in\mv{T}$.  In the first case, these
circulations are in bijection with circulations of
$\mv{D}_{n-1}-u-v_1$ (since $d^+(v_n)=0$ and $v_1$ may be suppressed).
This difference is counted by $f_1(n-2)$.
In the second case, 
the difference is counted by $-f_2(n)$, since we may think of deleting
${v_1v_2}$ and replacing ${v_nv_2}$ with ${v_nv_1}$; our path now 
starts at $v_2$ and runs through $v_n$ to $v_1$ (and the parity is changed when
accounting for ${v_1v_2}$).

Thus, the total difference in Case (ii) is counted by $f_1(n-2)-f_2(n)$.
Thus, the total difference overall is counted by $-1+f_1(n-1)
-f_1(n-2)+f_2(n)=-1+f_1(n-1)-2f_1(n-2).$  Substituting values from
Lemma~\ref{path-lemma} shows that this expression is non-zero when
$n\not\equiv 2\bmod 3$.  
\end{proof}

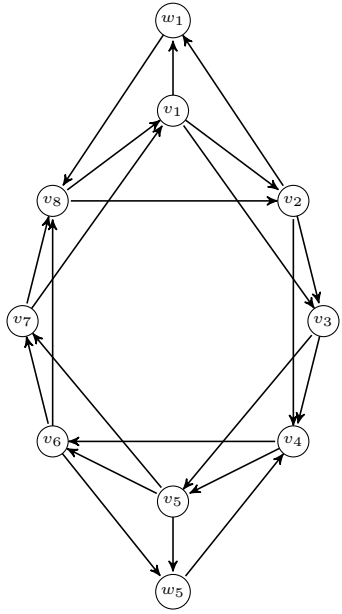
\begin{figure}[ht]
\begin{center}
\begin{tikzpicture}[scale = 8]
\tikzstyle{VertexStyle}=[shape = circle,	
								 minimum size = 6pt,
								 inner sep = 1.2pt,
                                 draw]
\Vertex[x = 1.25, y = 0.699999988079071, L = \tiny {$v_1$}]{v0}
\Vertex[x = 1.04999995231628, y = 0.550000011920929, L = \tiny {$v_8$}]{v1}
\Vertex[x = 1.45000004768372, y = 0.550000011920929, L = \tiny {$v_2$}]{v2}
\Vertex[x = 1, y = 0.350000023841858, L = \tiny {$v_7$}]{v3}
\Vertex[x = 1.5, y = 0.350000023841858, L = \tiny {$v_3$}]{v4}
\Vertex[x = 1.04999995231628, y = 0.149999976158142, L = \tiny {$v_6$}]{v5}
\Vertex[x = 1.45000004768372, y = 0.149999976158142, L = \tiny {$v_4$}]{v6}
\Vertex[x = 1.25, y = 0.849999994039536, L = \tiny {$w_1$}]{v7}
\Vertex[x = 1.25, y = -0.100000023841858, L = \tiny {$w_5$}]{v8}
\Vertex[x = 1.25, y = 0.050000011920929, L = \tiny {$v_5$}]{v9}
\Edge[style = {post}](v1)(v2)
\Edge[style = {pre}](v1)(v7)
\Edge[style = {post}](v1)(v0)
\Edge[style = {post}](v2)(v7)
\Edge[style = {pre}](v2)(v0)
\Edge[style = {pre}](v7)(v0)
\Edge[style = {pre}](v0)(v3)
\Edge[style = {post}](v3)(v1)
\Edge[style = {post}](v0)(v4)
\Edge[style = {pre}](v4)(v2)
\Edge[style = {pre}](v1)(v5)
\Edge[style = {post}](v5)(v3)
\Edge[style = {post}](v2)(v6)
\Edge[style = {pre}](v6)(v4)
\Edge[style = {post}](v9)(v5)
\Edge[style = {pre}](v9)(v6)
\Edge[style = {post}](v4)(v9)
\Edge[style = {post}](v6)(v5)
\Edge[style = {post}](v9)(v3)
\Edge[style = {pre}](v8)(v9)
\Edge[style = {pre}](v8)(v5)
\Edge[style = {post}](v8)(v6)
\end{tikzpicture}
\end{center}
\caption{The orientation for Lemma \ref{cycle+2pendant} with $n=8$.}
\label{fig:cycle+2pendant}
\end{figure}

\begin{lemma}
\label{cycle+2pendant}
Cycle + two pendant edges:
For $n \ge 7$, let $J_n$ consist of an $n$-cycle on vertices $v_1,\ldots,v_n$ (in clockwise
order) with pendant edges at $v_1$ and $v_5$ leading to vertices $w_1$ and
$w_5$.  Form $\mv{D}_n$ by squaring $J_n$ and orienting the edges as follows. 
Orient edges
${v_iv_{i+1}}$ and
${v_iv_{i+2}}$ away from $v_i$ (with subscripts modulo $n$). 
Orient
${w_1v_n}$ away from $w_1$ and ${v_1w_1}$ and ${v_2w_1}$ toward $w_1$; 
similarly, orient
${w_5v_4}$ away from $w_5$ and ${v_5w_5}$ and ${v_6w_5}$ toward $w_5$. 
We will show that $f(\mv{D_n})\ne 0$ (or else $f(\mv{D_n}\setminus B)\ne 0$ for some subset $B\subseteq \{w_1,w_5\}$).
\end{lemma}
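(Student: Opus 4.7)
\smallskip
\noindent\textbf{Proof plan.}  The plan is to mimic the structure of Lemma~\ref{cycle+pendant}, using two nested applications of the inclusion-exclusion / parity-reversing bijection machinery (one for each pendant vertex), and then reducing the resulting count on the cycle to a linear combination of the path counts $f_1, f_2$ from Lemma~\ref{path-lemma}.

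First, I would set up the inclusion-exclusion reduction.  If $\diff(\mv{D_n}\setminus B)\ne 0$ for any nonempty $B\subseteq\{w_1,w_5\}$, then that subgraph is $d_1$-paintable and we are done via the stated escape clause.  So we may assume $\diff(\mv{D_n}\setminus B)=0$ for every nonempty $B$, which allows us to restrict the count of $\diff(\mv{D_n})$ to the set $A$ of circulations $\mv{T}$ with $d^+(w_1)\ge 1$ and $d^+(w_5)\ge 1$ (equivalently $d^-(w_1)=d^-(w_5)=1$, since each $w_j$ has in-degree $2$ in $\mv{D_n}$).

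Second, at each pendant vertex $w_j$ I would apply the same parity-reversing bijection as in Lemma~\ref{cycle+pendant}: the directed path ${v_1w_1}$ is paired against ${v_1v_2},{v_2w_1}$, and the directed path ${v_5w_5}$ against ${v_5v_6},{v_6w_5}$.  For each $w_j$ the circulations in $A$ not canceled by the bijection split into exactly two local configurations:
\begin{itemize}
\item[(i)] $v_jw_j, v_jv_{j+1}\in\mv{T}$ and $v_{j+1}w_j\notin\mv{T}$,
\item[(ii)] $v_{j+1}w_j\in\mv{T}$ and $v_jw_j,v_jv_{j+1}\notin\mv{T}$,
\end{itemize}
(together with the forced edge $w_jv_{j-1}$, where indices on the cycle are mod $n$).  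This yields four global cases, (i,i), (i,ii), (ii,i), (ii,ii), for the behavior at $w_1$ and $w_5$ simultaneously.  In each case the out-/in-degrees at $v_1,v_2,v_5,v_6$ are fully determined along with a handful of forced cycle edges adjacent to $\{v_1,v_2,v_5,v_6\}$.

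Third, in each of the four cases I would contract or delete the forced edges incident to $w_1$ and $w_5$ and observe that what remains to enumerate is a pair of weakly eulerian subgraphs, one on the short arc $v_2,v_3,v_4,v_5$ and one on the long arc $v_6,v_7,\ldots,v_n,v_1$ (each viewed as a copy of $\mv{P_k}$ for an appropriate $k$).  The prescribed values of $d^+$ at the arc's first vertex and $d^-$ at its last vertex (either $0$, $1$, or $2$) match the definitions of $f_1$ and $f_2$ from the discussion preceding Lemma~\ref{path-lemma}.  Each case therefore contributes $\pm f_i(\text{short arc})\cdot f_j(\text{long arc})$ for some $i,j\in\{1,2\}$ (the sign accounting for the parity of the deleted edges).

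Finally, I would assemble the four contributions into a single integer expression in $f_1(2), f_2(4)$ (the only values that occur on the short arc of length $4$) and $f_1(n-4), f_1(n-2), f_2(n-4), f_2(n-2)$, and apply Lemma~\ref{path-lemma} (together with $f_2(n)=-f_1(n-2)$) to evaluate it residue-by-residue modulo $3$.  The main obstacle will be the careful bookkeeping of signs and parities across the four cases and the verification that the resulting expression is nonzero in every residue class of $n \pmod 3$; if some residue class gives $0$ for $\diff(\mv{D_n})$ itself, the escape clause saves us, since in those residues $\diff(\mv{D_n}-w_5)$ is computed by Lemma~\ref{cycle+pendant} (possibly with relabeling) and is nonzero precisely when $\diff(\mv{D_n})$ vanishes.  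A secondary obstacle is the small-$n$ boundary:  the hypothesis $n\ge 7$ ensures that $v_2,v_3,v_4,v_5$ and $v_6,\ldots,v_n,v_1$ are genuinely two disjoint arcs with no spurious edges of $J_n^2$ between them, so the arc counts really do decouple.
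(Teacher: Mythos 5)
Your plan follows the paper's proof almost step for step: inclusion-exclusion to restrict to circulations with $d^+(w_1)=d^+(w_5)=1$, parity-reversing bijections on the paths $v_1w_1$ versus $v_1v_2,v_2w_1$ and $v_5w_5$ versus $v_5v_6,v_6w_5$, four surviving cases, and reduction to the path counts of Lemma~\ref{path-lemma}. One inaccuracy in step three: the claim that every case contributes a clean product of one count on the arc $v_2,v_3,v_4,v_5$ and one on the arc $v_6,\dots,v_n,v_1$ does not hold, because the junction edges ($v_1v_3$, $v_nv_2$, $v_3v_5$, $v_5v_7$, $v_4v_6$) are not all forced by the four cases, so the two sides do not decouple. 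In the paper's case (i.1) what remains is a count of genuine circulations (including the empty one) on the whole remaining graph, and a further bijection pairing $v_3v_5$ against $v_3v_4,v_4v_5$ is needed to show the nonempty ones cancel, leaving a contribution of $1$; in case (ii.2) the subcase $v_nv_2\in\mv{T}$ yields a single wrap-around path counted by $f_2(n-4)$ rather than a product. The actual case totals are $1$, $-f_1(n-4)$, $-f_1(n-4)$, and $-f_1(n-6)+f_2(n-4)$, summing to $1-2\bigl(f_1(n-4)+f_1(n-6)\bigr)$, which is nonzero in every residue class modulo $3$ for $n\ge 8$ and equals $-1$ at $n=7$. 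This is bookkeeping you would likely discover while executing the plan, so it is not fatal, but the final expression is not of the bilinear form you anticipate.

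The genuine gap is your fallback in the last step, which is circular. The restriction to circulations hitting both pendant vertices is licensed only by the standing assumption that $\diff(\mv{D_n}\setminus B)=0$ for every nonempty $B\subseteq\{w_1,w_5\}$. If your restricted count then came out $0$ in some residue class, you could not rescue the argument by asserting $\diff(\mv{D_n}-w_5)\ne 0$: that is precisely one of the quantities you have assumed to vanish, and the dichotomy in the lemma's statement is the one you already consumed at the start, not one you may re-enter afterwards. Moreover, Lemma~\ref{cycle+pendant} would not deliver the nonvanishing you want even on its own terms, since its conclusion is itself a disjunction and its count $-1+f_1(n-1)-2f_1(n-2)$ vanishes when $n\equiv 2\pmod 3$; there is no complementarity of the sort ``nonzero precisely when $\diff(\mv{D_n})$ vanishes.'' The plan succeeds only because, as the paper's computation shows, the restricted count is in fact nonzero for every $n\ge 7$, so the contingency you describe never arises; as written, though, that contingency step is not a valid argument and should be removed rather than relied upon.
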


\begin{proof}
Form $\mv{D_n}$ as in the lemma.
We will show that $\diff(\mv{D_n})\ne 0$, and thus $J^2_n$ is $d_1$-paintable.
For each nonempty $B\subseteq \{w_1,w_5\}$,
we may assume that $\diff(\mv{D_n}\setminus B)= 0$, 
for otherwise $\mv{D_n}\setminus B$ is $d_1$-paintable.  
Thus, restricting our count to the set $A$ of circulations with $d^+(w_1)=1$
and $d^+(w_5)=1$ does not affect the difference.  

Let $\mv{T}$ be a circulation in $A$.
Clearly $\mv{T}$ contains $\mv{w_1v_n}$ and exactly one of $\mv{v_1w_1}$
and $\mv{v_2w_1}$.
Consider the directed paths $v_1w_1$ and $v_1v_2, v_2w_1$.
If $\mv{T}$ contains all edges of one path and none of the other, then we can
pair $\mv{T}$ via a parity-reversing bijection.  So we assume we are not in one
of those cases.  
Thus either (i) ${v_2w_1}\in \mv{T}$ and
${v_1w_1},{v_1v_2}\notin\mv{T}$ or (ii) ${v_1w_1},{v_1v_2}\in \mv{T}$
and ${v_2w_1}\notin\mv{T}$.

Now we consider the directed paths ${v_5w_5}$ and ${v_5v_6}, {v_6w_5}$.  Among
those circulations, within Cases (i) and (ii), where 
$\mv{T}$ contains all of one path and none of the other
we again pair $\mv{T}$ via a parity-reversing bijection, by 
removing the edges of one path and adding the edges of the other.
Thus, we need only consider two subcases
in each case: (1) ${v_6w_5}\in \mv{T}$ and ${v_5w_5},{v_5v_6}\notin\mv{T}$ and
(2) ${v_5w_5},{v_5v_6}\in \mv{T}$ and ${v_6w_5}\notin\mv{T}$.

Case (i.1): 
${v_2w_1}\in \mv{T}$ and ${v_1w_1},{v_1v_2}\notin\mv{T}$ and also
${v_6w_5}\in \mv{T}$ and ${v_5w_5},{v_5v_6}\notin\mv{T}$.
Since ${v_2w_1} \in \mv{T}$, we must have ${v_nv_2} \in \mv{T}$ and also
${v_2v_3}, {v_2v_4} \not \in \mv{T}$. 
Similarly, since ${v_6w_5} \in \mv{T}$, we must have ${v_4v_6} \in \mv{T}$ and
also ${v_6v_7}, {v_6v_8} \not \in \mv{T}$. 
Since both triangles $w_1v_nv_2$ and $v_4v_6w_5$ must be included in every circulation under consideration, we may remove $w_1, v_2, w_5, v_6$ without changing the total difference.
Now any non-empty circulation must contain both $v_1v_3$ and $v_5v_7$.  But we have a parity reversing bijection between those circulations containing $v_3v_5$ and those containing $v_3v_4, v_4v_5$, so for non-empty circulations the difference is zero.  Thus after adding in the empty circulation, we see that the total difference is $1$ for this case.

Case (i.2): 
${v_2w_1}\in \mv{T}$ and ${v_1w_1},{v_1v_2}\notin\mv{T}$ and also
${v_5w_5},{v_5v_6}\in \mv{T}$ and ${v_6w_5}\notin\mv{T}$.
Since ${v_2w_1} \in \mv{T}$, we must have ${v_nv_2} \in \mv{T}$ and hence
${v_2v_3}, {v_2v_4} \not \in \mv{T}$. Since the triangle $w_1v_nv_2$ must be
included in every circulation under consideration, we may remove $w_1, v_2$ at
the cost of negating the difference.  Since ${v_5w_5},{v_5v_6}\in \mv{T}$, we
must have ${w_5v_4}, {v_3v_5}, {v_4v_5}\in \mv{T}$ and ${v_5v_7} \not \in
\mv{T}$.  But then ${v_3v_4} \not \in \mv{T}$ and hence ${v_4v_6} \not \in
\mv{T}$.  Now we may remove $w_5$ and $v_4$ at the cost of negating the
difference again.  Now removing $v_3$ and $v_5$ we lose three edges that must
be in every circulation and the resulting difference is counted by $f_1(n-4)$;
the paths run from $v_6$ through $v_n$ to $v_1$. 
Hence this case contributes $-f_1(n-4)$ to the difference.

Case (ii.1): 
${v_1w_1},{v_1v_2}\in \mv{T}$ and ${v_2w_1}\notin\mv{T}$ and also
${v_6w_5}\in \mv{T}$ and ${v_5w_5},{v_5v_6}\notin\mv{T}$.
Since $v_1w_1,v_1v_2\in \mv{T}$, we get $v_nv_1, v_{n-1}v_1\in \mv{T}$.
Since $v_6w_5\in \mv{T}$ and $v_5v_6\notin \mv{T}$, we get $v_4v_5\in \mv{T}$
and $v_6v_7,v_6v_8\notin \mv{T}$.  Since we have $v_{n-1}v_1\in \mv{T}$, we
must also have $v_5v_7\in \mv{T}$.  Since $v_6v_7,v_6v_8\notin \mv{T}$ and
$v_5v_7\in \mv{T}$, we get $d^+(v_2)=1$.  This also implies $d^+(v_{n-1})=1$.
Now when $n \ge 9$ our difference is counted by $-f_1(3)f_1(n-7)$. 
Here $f_1(3)$ accounts for the edges of the path from $v_2$ to $v_5$ and
$f_1(n-7)$ accounts for the edges of the path from $v_7$ to $v_{n-1}$ (and the
$-1$ accounts for the 9 edges that are present but not on either of these
paths).  Since $f_1(3)=1$, the total for this case is $-f_1(n-7)$.
When $n=8$ the total is $-f_1(3)=-1$ and when $n=7$ the total is 0, since
$v_{n-1}=v_6$.  Now by Lemma~\ref{path-lemma}, together with checking the cases
$n=7$ and $n=8$, we get that this case is counted by $-f_1(n-4)$.

Case (ii.2): 
${v_1w_1},{v_1v_2}\in \mv{T}$ and ${v_2w_1}\notin\mv{T}$ and also
${v_5w_5},{v_5v_6}\in \mv{T}$ and ${v_6w_5}\notin\mv{T}$.
Since ${v_1w_1},{v_1v_2}\in \mv{T}$, we must have ${w_1v_n}, {v_nv_1},
{v_{n-1}v_1}\in \mv{T}$ and ${v_1v_3} \not \in \mv{T}$.
Since ${v_5w_5},{v_5v_6}\in \mv{T}$, we must have ${w_5v_4}, {v_3v_5},
{v_4v_5}\in \mv{T}$ and ${v_5v_7} \not \in \mv{T}$.
Suppose $v_{n}v_2\notin \mv{T}$.  Now $v_2v_4\notin \mv{T}$, so $d^+(v_4)=1$.
Now our problem reduces to computing $-f_1(n-6)$; the $f(n-6)$ accounts for the
edges on the path from $v_6$ to $v_{n-1}$ and the $-1$ accounts for the 11 other
edges that are present.
Suppose instead that $v_{n}v_2\in \mv{T}$.  Now our problem reduces to computing
$f_2(n-4)$, accounting for the edges on the two paths from to $v_1$ (after
replacing $v_{n}v_2$ by $v_{v}v_1$) and the 12 edges present but not on these
paths.

So, combining the contributions from all cases we get that the difference is
$1  -f_1(n-4) - f_1(n-4) - f_1(n - 6) + f_2(n-4)$. By Lemma~\ref{path-lemma}
this is $1 - 2(f_1(n-4) + f_1(n-6)) \ne 0$ when $n \ge 8$.  When $n = 7$ the
difference is $1 -2f_1(3) - 1 + f_2(3) = -1$.
\end{proof}

For $n \ge 4$, a subgraph $\mv{T}\subseteq \mv{P_n}$ is \emph{extra weakly eulerian} if each vertex
$w\notin\{v_1,v_2,v_{n-1,}v_n\}$ satisfies $d^+(w)=d^-(w)$,
$d^+(v_1)=d^-(v_n)=1$, $d^+(v_2) = d^-(v_2) + 1$ and $d^-(v_{n-1}) = d^+(v_{n-1}) + 1$
Let $EE^*(\mv{P_n})$ (resp. $EO^*(\mv{P_n}$))
denote the set of even (resp. odd) extra weakly eulerian subgraphs.
Finally, let $g(n)=|EE^*(\mv{P_n})|-|EO^*(\mv{P_n})|$.
%
Lemma~\ref{path-lemma2} is analogous to Lemma~\ref{path-lemma}, but for extra
weakly eulerian subgraphs.

\begin{lemma}
If $n = 3k + j \ge 4$ for a positive integer $k$ and $j\in \{-1,0,1\}$, then $g(n) = -j$.
\label{path-lemma2}
\end{lemma}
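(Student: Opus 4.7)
The plan is to follow the blueprint of Lemma~\ref{path-lemma}: pair most subgraphs via a parity-reversing involution on the first few edges, reduce the residual case to a smaller instance, and then verify small base values.

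First I will pin down the restriction of $\mv{T}\in EE^*(\mv{P_n})\cup EO^*(\mv{P_n})$ to the four edges $\{v_1v_2,v_1v_3,v_2v_3,v_2v_4\}$ incident to $v_1$ or $v_2$. Writing $[e]$ for the indicator of $e\in\mv{T}$, the conditions $d^+(v_1)=1$ and $d^+(v_2)=d^-(v_2)+1$ translate to $[v_1v_2]+[v_1v_3]=1$ and $[v_2v_3]+[v_2v_4]=[v_1v_2]+1$. These admit exactly three feasible patterns: (i) $v_1v_3,v_2v_4\in\mv{T}$ with $v_1v_2,v_2v_3\notin\mv{T}$; (ii) $v_1v_3,v_2v_3\in\mv{T}$ with $v_1v_2,v_2v_4\notin\mv{T}$; or (iii) $v_1v_2,v_2v_3,v_2v_4\in\mv{T}$ with $v_1v_3\notin\mv{T}$. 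The combination $[v_1v_2]=1,[v_2v_3]=0$ is ruled out because it would require $[v_2v_4]=2$.

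Next I will pair (i) with (iii) via the involution flipping the three edges $\{v_1v_2,v_1v_3,v_2v_3\}$ while fixing everything else in $\mv{T}$. A direct check shows that $d^+(v_1)=1$, the equation $d^+(v_2)-d^-(v_2)=1$, and the contribution $[v_1v_3]+[v_2v_3]=1$ to $d^-(v_3)$ are each preserved by the flip; so (i)-type subgraphs map bijectively to (iii)-type subgraphs with the rest of $\mv{T}$ unchanged. Flipping three edges reverses parity, so cases (i) and (iii) contribute $0$ to $g(n)$ in total.

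For case (ii), assuming $n\ge 7$ so that $v_3$ and $v_5$ are both interior, the balance condition at $v_3$ forces $d^+(v_3)=d^-(v_3)=2$, hence $v_3v_4,v_3v_5\in\mv{T}$. Deleting $v_1,v_2,v_3$ yields a bijection from the case-(ii) subgraphs of $\mv{P_n}$ onto $EE^*(\mv{P_{n-3}})\cup EO^*(\mv{P_{n-3}})$ on $\{v_4,\dots,v_n\}$: the inherited condition $d^+(v_4)=1$ (coming from $d^-(v_4)=1$ in $\mv{T}$) matches the $v_1'$-role of $v_4$, and dropping the contribution of $v_3v_5$ to $d^-(v_5)$ converts the interior balance at $v_5$ into the $v_2'$-excess $d^+(v_5)=d^-(v_5)+1$ in $\mv{P_{n-3}}$; conditions at $v_{n-1}$ and $v_n$ are unchanged. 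Four edges were added, so the parity change is even, giving $g(n)=g(n-3)$ for $n\ge 7$. Combined with direct enumeration of the base cases $n\in\{4,5,6\}$, which yield $g(4)=-1$, $g(5)=1$, $g(6)=0$ (matching $-j$), this completes the proof.

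The main obstacle is the boundary analysis at small $n$. For $n=6$ the would-be reduction would map case-(ii) subgraphs to ``extra weakly eulerian'' subgraphs of $\mv{P_3}$, but under the identification the constraint at $v_5=v_{n-1}$ becomes inconsistent with the $v_2'$-role, so $n=6$ cannot be obtained from the recursion and must be handled by hand; the cases $n=4,5$ must likewise be treated directly since $\mv{P_{n-3}}$ is too small for the definition to apply. Everything else is a direct parity-counting analogue of Lemma~\ref{path-lemma}.
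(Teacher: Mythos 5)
Your proof is correct and takes essentially the same route as the paper's: the identical parity-reversing pairing between the paths $v_1v_3$ and $v_1v_2,v_2v_3$ (your cases (i) and (iii)), the same surviving case forcing $v_1v_3,v_2v_3,v_3v_4,v_3v_5\in\mv{T}$ and $v_1v_2,v_2v_4\notin\mv{T}$, the resulting recursion $g(n)=g(n-3)$, and the base values $g(4)=-1$, $g(5)=1$, $g(6)=0$. Your explicit verification of the bijection onto $\mv{P_{n-3}}$ and the remark on why $n\in\{4,5,6\}$ must be handled directly simply spell out details the paper leaves implicit.
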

\begin{proof}
Let $\mv{T}\subseteq \mv{P_n}$ be extra weakly eulerian.  Consider the directed paths $v_1v_3$ and $v_1v_2, v_2v_3$.  If $\mv{T}$ contains all of one path but none of the other, then we can
pair $\mv{T}$ with its complement which has opposite parity.  If neither of these cases holds, then we must have either $v_1v_3, v_2v_3 \in \mv{T}$ and $v_1v_2 \notin \mv{T}$ or $v_1v_2 \in \mv{T}$ and $v_1v_3, v_2v_3 \notin \mv{T}$.  The latter case is impossible, so suppose we have $v_1v_3, v_2v_3 \in \mv{T}$ and $v_1v_2 \notin \mv{T}$.  Then $v_3v_4, v_3v_5 \in \mv{T}$ and $v_2v_4 \notin \mv{T}$.  Hence the difference is counted by $g(n - 3)$.  It remains only to check that $g(4) = -1$, $g(5) = 1$ and $g(6) = 0$.
\end{proof}

\begin{figure}[ht]
\begin{center}
\begin{tikzpicture}[scale = 8]
\tikzstyle{VertexStyle}=[shape = circle,	
								 minimum size = 6pt,
								 inner sep = 1.2pt,
                                 draw]
\Vertex[x = 1.25, y = 0.699999988079071, L = \tiny {$v_1$}]{v0}
\Vertex[x = 1.04999995231628, y = 0.550000011920929, L = \tiny {$v_8$}]{v1}
\Vertex[x = 1.45000004768372, y = 0.550000011920929, L = \tiny {$v_2$}]{v2}
\Vertex[x = 1, y = 0.350000023841858, L = \tiny {$v_7$}]{v3}
\Vertex[x = 1.5, y = 0.350000023841858, L = \tiny {$v_3$}]{v4}
\Vertex[x = 1.04999995231628, y = 0.149999976158142, L = \tiny {$v_6$}]{v5}
\Vertex[x = 1.45000004768372, y = 0.149999976158142, L = \tiny {$v_4$}]{v6}
\Vertex[x = 1.20000004768372, y = 0.849999994039536, L = \tiny {$w_1$}]{v7}
\Vertex[x = 1.20000004768372, y = -0.100000023841858, L = \tiny {$w_5$}]{v8}
\Vertex[x = 1.25, y = 0.050000011920929, L = \tiny {$v_5$}]{v9}
\Edge[style = {post}](v0)(v2)
\Edge[style = {post}](v0)(v4)
\Edge[style = {post}](v0)(v7)
\Edge[style = {post}](v1)(v0)
\Edge[style = {post}](v1)(v2)
\Edge[style = {post}](v2)(v4)
\Edge[style = {post}](v2)(v6)
\Edge[style = {post}](v2)(v7)
\Edge[style = {post}](v3)(v0)
\Edge[style = {post}](v3)(v1)
\Edge[style = {post}](v4)(v6)
\Edge[style = {post}](v4)(v9)
\Edge[style = {post}](v5)(v1)
\Edge[style = {post}](v5)(v3)
\Edge[style = {post}](v5)(v8)
\Edge[style = {post}](v6)(v5)
\Edge[style = {post}](v6)(v9)
\Edge[style = {post}](v7)(v1)
\Edge[style = {pre}](v7)(v8)
\Edge[style = {post}](v8)(v6)
\Edge[style = {post}](v9)(v3)
\Edge[style = {post}](v9)(v5)
\Edge[style = {post}](v9)(v8)
\end{tikzpicture}
\end{center}
\caption{The orientation for Lemma \ref{cycle+2pendant+edge}.}
\label{fig:cycle+2pendant+edge}
\end{figure}
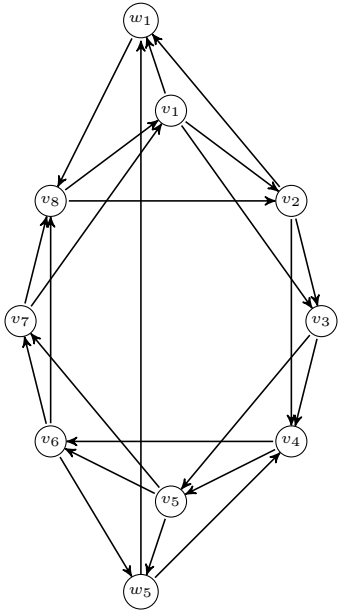

\begin{lemma}
\label{cycle+2pendant+edge}
8-cycle + two pendant edges + extra edge:
Let $J_8$ consist of an $8$-cycle on vertices $v_1,\ldots,v_8$ (in clockwise
order) with pendant edges at $v_1$ and $v_5$ leading to vertices $w_1$ and
$w_5$.  Form $\mv{D}_8$ by squaring $J_8$, adding the edge $w_1w_5$ and orienting the edges as follows. 
Orient edges
${v_iv_{i+1}}$ and
${v_iv_{i+2}}$ away from $v_i$ (with subscripts modulo $8$). 
Orient
${w_1v_8}$ away from $w_1$ and ${v_1w_1}$ and ${v_2w_1}$ toward $w_1$; 
similarly, orient
${w_5v_4}$ away from $w_5$ and ${v_5w_5}$ and ${v_6w_5}$ toward $w_5$. Finally, orient
${w_5w_1}$ toward $w_1$.
We will show that $f(\mv{D_8})\ne 0$ (or else $f(\mv{D_8}\setminus B)\ne 0$ for some subset $B\subseteq \{w_1,w_5\}$).
\end{lemma}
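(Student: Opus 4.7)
The approach is to extend the proof of Lemma~\ref{cycle+2pendant} in the case $n=8$ to account for the one extra directed edge $w_5w_1$. First I would check that the given orientation makes every vertex of $\mv{D_8}$ have in-degree at least $2$, so that by \hyperref[AT-paint]{Theorem~B} it suffices to exhibit a subset $B\subseteq\{w_1,w_5\}$ with $\diff(\mv{D_8}\setminus B)\ne 0$.

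As in the previous two lemmas, I would perform the standard inclusion-exclusion reduction: assume $\diff(\mv{D_8}\setminus B)=0$ for every nonempty $B\subseteq\{w_1,w_5\}$ (else we are done), and thereby restrict the count to the set $A$ of circulations of $\mv{D_8}$ in which both $w_1$ and $w_5$ have positive in-degree (equivalently, positive out-degree). I would then split $A$ according to whether the added edge $w_5w_1$ lies in the circulation.

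Case $w_5w_1\notin\mv{T}$: the digraph $\mv{D_8}-w_5w_1$ is exactly the digraph $\mv{D_8}$ of Lemma~\ref{cycle+2pendant}, so by the final computation there this case contributes a net $+1$ to $\diff$. Case $w_5w_1\in\mv{T}$: since $N^+(w_1)=\{v_8\}$, the circulation condition at $w_1$ forces $w_1v_8\in\mv{T}$ and both $v_1w_1,v_2w_1\notin\mv{T}$, so the ``$w_1$-side'' branching of Lemma~\ref{cycle+2pendant} collapses to a single forced configuration (with several additional forced edges on the cycle around $v_1$, in order to balance degrees at $v_8$ and $v_1$). From here I would split on whether $w_5v_4\in\mv{T}$ in order to determine $d^-(w_5)$ and which of $v_5w_5,v_6w_5$ appear. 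Each resulting subcase reduces, via the same parity-reversing bijections on the triangles and directed paths used before, to counting weakly eulerian or extra weakly eulerian subgraphs of $\mv{P_k}$ for small $k$; the contributions can then be read off from Lemmas~\ref{path-lemma} and~\ref{path-lemma2}.

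The main obstacle is bookkeeping: the forced edges in the $w_5w_1\in\mv{T}$ case each flip the parity of the surviving circulation, and I must verify that the path bijections used in Lemma~\ref{cycle+2pendant} continue to apply when the ``$w_1$-side'' degrees have been fixed differently. Once the Case~B contribution is computed in closed form, adding it to the $+1$ from Case~A should yield a nonzero total, and then \hyperref[AT-paint]{Theorem~B} delivers $d_1$-paintability of $J_8^2+w_1w_5$.
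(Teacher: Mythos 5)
Your proposal follows essentially the same route as the paper: after the inclusion--exclusion reduction you split the circulations according to whether they contain $w_5w_1$, invoke the $n=8$ computation from Lemma~\ref{cycle+2pendant} for those that do not (contributing $1$), and plan to handle those that do via the forced edges at $w_1$ and the same parity-reversing path bijections and reductions to $f_1$, $f_2$, $g$. The only piece you leave unverified is the decisive computation that the circulations containing $w_5w_1$ contribute exactly zero (the paper's four-case analysis of which of $v_5w_5$, $v_5v_6$, $v_6w_5$ appear), which is what guarantees the total $1+0\ne 0$ rather than merely hoping the two contributions do not cancel; otherwise your framework matches the paper's proof.
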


\begin{proof}
Form $\mv{D_8}$ as in the lemma.  Suppose $f(\mv{D_8}\setminus B) = 0$ for each subset $\emptyset \ne B\subseteq \{w_1,w_5\}$.  Then by Lemma~\ref{cycle+2pendant}, we have
$\diff(\mv{D_8} - w_5w_1) \ne 0$.  Hence it will suffice to show that the circulations of $\mv{D_8}$ containing $w_5w_1$ are half odd and half even.

Let $\mv{T}$ be a circulation of $\mv{D_8}$ containing $w_5w_1$.  Then ${w_1v_8} \in \mv{T}$ and ${v_1w_1},{v_2w_1}\notin\mv{T}$.  After suppressing $w_1$, we are looking at all circulations containing $w_5v_8$.

Consider the directed paths $v_5w_5$ and $v_5v_6, v_6w_5$.
If $\mv{T}$ contains all edges of one path and none of the other, then we can
pair $\mv{T}$ via a parity-reversing bijection.  So we assume we are not in one
of those cases.  Thus either (i) ${v_6w_5}\in \mv{T}$ and
${v_5w_5},{v_5v_6}\notin\mv{T}$, (ii) ${v_5w_5},{v_5v_6}\in \mv{T}$
and ${v_6w_5}\notin\mv{T}$, (iii) ${v_5w_5},{v_5v_6},{v_6w_5}\in \mv{T}$ or
(iv) ${v_6w_5},{v_5w_5} \in \mv{T}$ and ${v_5v_6} \notin \mv{T}$.

Case (i):
${v_6w_5}\in \mv{T}$ and ${v_5w_5},{v_5v_6}\notin\mv{T}$.
Then ${v_4v_6} \in \mv{T}$ and ${w_5v_4}, {v_6v_7}, {v_6v_8} \notin \mv{T}$. 
Now we can suppress $v_6$ and $w_5$. First suppose
${v_5v_7} \notin \mv{T}$.  Now $v_7, v_5 \notin \mv{T}$ and what remains is
counted by $-f_1(5)$.  Instead suppose ${v_5v_7} \in \mv{T}$. Then the
difference is counted by $g(7)$; the path is from $v_7$ to $v_5$.  Hence the
total difference is $g(7) - f_1(5) = -1 - (-1) = 0$.

Case (ii):
${v_5w_5},{v_5v_6}\in \mv{T}$ and ${v_6w_5}\notin\mv{T}$.
Then $v_3v_5, v_4v_5 \in \mv{T}$ and $w_5v_4, v_5v_7 \notin \mv{T}$.  Now we
can suppress $w_5$.  First suppose ${v_4v_6} \in \mv{T}$. 
There is only one possible circulation and it contains all edges except
$v_7v_8$; this circulation is odd, hence the difference is $-1$.  Now suppose
${v_4v_6} \notin \mv{T}$.  If $v_6v_7 \in \mv{T}$, then $v_6v_8 \notin \mv{T}$
and the difference is counted by $-g(6)$; the path is from $v_7$ to $v_4$.  If
$v_6v_7 \notin \mv{T}$, then $v_6v_8, v_8v_1, v_8v_2 \in \mv{T}$ and $v_7
\notin \mv{T}$.  Now the difference is counted by $-g(4)$; the path is from
$v_1$ to $v_4$.  Hence the total difference is $-1 - g(6) - g(4) = 0$.

Case (iii):
${v_5w_5},{v_5v_6},{v_6w_5}\in \mv{T}$.
Then $w_5v_4, v_3v_5, v_4v_5 \in \mv{T}$ and $v_5v_7 \notin \mv{T}$.  
If $v_4v_6, v_6v_7 \in \mv{T}$, then the difference is counted by
$g(6)$; the path is from $v_7$ to $v_4$.  Since $v_6v_7 \in \mv{T}$ and
$v_4v_6 \notin \mv{T}$ is impossible, we may assume either $v_4v_6 \in \mv{T}$
and $v_6v_7 \notin \mv{T}$ or $v_4v_6,
v_6v_7 \notin \mv{T}$.  Suppose we are in the former case.  Then $v_6v_8,
v_8v_1, v_8v_2 \in \mv{T}$ and $v_7 \notin \mv{T}$.  This difference is counted
by $g(4)$; the path is from $v_1$ to $v_4$.  Now suppose $v_4v_6, v_6v_7
\notin \mv{T}$.  Then $v_7 \notin \mv{T}$ and $v_6v_8 \notin \mv{T}$. 
This difference is counted by $f_1(4)$; the path is from $v_8$ to $v_3$. 
Hence the total difference is $g(6) + g(4) + f_1(4) = 0$.

Case (iv):
${v_6w_5},{v_5w_5} \in \mv{T}$ and ${v_5v_6} \notin \mv{T}$.
Then $w_5v_4, v_4v_6 \in \mv{T}$ and $v_6v_7, v_6v_8 \notin \mv{T}$.  
If $v_5v_7 \notin \mv{T}$, then $v_7 \notin \mv{T}$ and the
difference is counted by $f_1(6) = 0$; the path is from $v_8$ to $v_5$.  Hence
we may assume $v_5v_7 \in \mv{T}$.  Then $v_3v_5, v_4v_5 \in \mv{T}$ and the
difference is counted by $g(6) = 0$; the path is from $v_7$ to $v_4$.

So in each of the four cases, half the circulations are even and half are odd.
Thus, the difference is not affected by the circulations that use edge $w_5w_1$.
Now by Lemma~\ref{cycle+2pendant}, $f(\mv{D})\ne 0$, so $\mv{D}$ is
$d_1$-paintable.
\end{proof}

\section{Generalizing to Alon-Tarsi number}
\label{AT-section}
Excepting the direct proofs of paintability in Section \ref{DirectProofs}, we've actually proved that all the excluded subgraphs have a good Alon-Tarsi orientation.  This suggests that the main theorem might hold more generally for the Alon-Tarsi number $\AT(G)$---the least $k$ for which $G$ has an orientation $\vec{D}$ with $\Delta^+(\vec{D}) \le k-1$ and $EE(\vec{D}) \ne EO(\vec{D})$.   Here we show that this is indeed the case.

\begin{atmainthm}
If $G$ is a connected graph with maximum degree $\Delta\ge 3$ and
$G$ is not the Peterson graph, the Hoffman-Singleton graph, or a Moore graph
with $\Delta=57$, then $\AT(G^2)\le \Delta^2-1$.
\end{atmainthm}

The proof is identical to the paintability proof except we need to replace all
the auxiliary lemmas with their $\AT$ counterparts.  First the two subgraph
lemmas; these are actually easier to prove in the $\AT$ context.

\begin{lemma}
Let $G$ be a graph with maximum degree $\Delta$ and $H$ be an induced subgraph
of $G$ that is $d_1$-AT.  If $G\setminus H$ is $(\Delta-1)$-AT,
then $G$ is $(\Delta-1)$-AT.
\label{subgraphlemmaAT}
\end{lemma}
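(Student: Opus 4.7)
The plan is to splice together the two hypothesized orientations and orient every ``bridge'' edge between $V(H)$ and $V(G\setminus H)$ in a single direction, so that bridge edges cannot appear in any circulation. Let $\vec{D}_H$ be an orientation of $H$ witnessing $d_1$-AT, i.e., $d^+_{\vec{D}_H}(v) \le d_H(v) - 2$ for every $v \in V(H)$ and $|EE(\vec{D}_H)| \ne |EO(\vec{D}_H)|$; let $\vec{D}_0$ be an orientation of $G \setminus H$ witnessing $(\Delta-1)$-AT, i.e., $\Delta^+(\vec{D}_0) \le \Delta - 2$ and $|EE(\vec{D}_0)| \ne |EO(\vec{D}_0)|$. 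Form $\vec{D}$ on $G$ by taking the union of $\vec{D}_H$ and $\vec{D}_0$ and orienting each edge of $G$ between $V(H)$ and $V(G\setminus H)$ from its $H$-endpoint toward its $(G\setminus H)$-endpoint.

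First I would verify the out-degree bound. For $v \in V(H)$,
\[
d^+_{\vec{D}}(v) \;=\; d^+_{\vec{D}_H}(v) + (d_G(v) - d_H(v)) \;\le\; (d_H(v) - 2) + (d_G(v) - d_H(v)) \;\le\; \Delta - 2;
\]
for $w \in V(G \setminus H)$, every bridge edge incident to $w$ is oriented into $w$, so $d^+_{\vec{D}}(w) = d^+_{\vec{D}_0}(w) \le \Delta - 2$. Hence $\Delta^+(\vec{D}) \le \Delta - 2$.

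Next I would observe that no circulation of $\vec{D}$ uses a bridge edge. If $\vec{T}$ is any circulation of $\vec{D}$, then every bridge edge of $\vec{T}$ is an out-edge at its $H$-endpoint; summing $d^+_{\vec{T}}(v) = d^-_{\vec{T}}(v)$ over $v \in V(H)$ and canceling the contributions of edges of $\vec{T}$ that lie inside $H$ forces the number of bridge edges in $\vec{T}$ to be $0$. Therefore every circulation of $\vec{D}$ is a disjoint union of a circulation of $\vec{D}_H$ and a circulation of $\vec{D}_0$, and its parity is the sum of the two parities. A standard computation then yields
\[
\diff(\vec{D}) \;=\; \diff(\vec{D}_H)\cdot\diff(\vec{D}_0) \;\ne\; 0,
\]
so $\vec{D}$ witnesses $\AT(G) \le \Delta - 1$.

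There is no real obstacle here; in contrast to Lemma~\ref{subgraphlemma}, whose paintability proof required Painter to simulate two games, the AT analogue is almost immediate because $\diff$ multiplies over a disjoint circulation decomposition. The only thing to check with any care is the out-degree budget on the $H$-side, which works precisely because $\vec{D}_H$ supplies a slack of $2$ relative to degrees in $H$ while we only need a slack of $2$ relative to degrees in $G$.
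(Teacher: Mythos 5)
Your proposal is correct and is essentially the paper's own proof: the same orientation of all bridge edges from $H$ toward $G\setminus H$, the same out-degree verification, and the same factorization $\diff(\vec{D})=\diff(\vec{D}_H)\cdot\diff(\vec{D}_0)$ once circulations are seen to avoid bridge edges. The only cosmetic difference is that you justify the absence of bridge edges in a circulation by a degree-sum count over $V(H)$, while the paper simply notes that no directed cycle can cross between the two parts since all crossing edges point one way.
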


\begin{proof}
Let $G$ and $H$ satisfy the hypotheses.  Take an orientation of $G\setminus H$ demonstrating that it is $(\Delta-1)$-AT and an orientation of $H$ demonstrating that it is $d_1$-AT.
Now orient all the edges between $H$ and $G\setminus H$ into $G\setminus H$.
Call the resulting oriented graph $\vec{D}$. Then $\vec{D}$ satisfies the
outdegree requirements of being $(\Delta-1)$-AT since the outdegree of the
vertices in $G\setminus H$ haven't changed and the outdegree of each $v \in
V(H)$ has increased by $d_G(v) - d_H(v)$.  Since no directed cycle in $D$ has
vertices in both $H$ and $\vec{D}\setminus H$, the circulations of $\vec{D}$
are just all pairings of circulations of $H$ and $\vec{D}\setminus H$. 
Therefore $EE(\vec{D}) - EO(\vec{D}) = EE(H)EE(\vec{D}\setminus H) +
EO(H)EO(\vec{D}\setminus H) - (EE(H)EO(\vec{D}\setminus H) +
EO(H)EE(\vec{D}\setminus H)) = (EE(H) - EO(H))(EE(\vec{D}\setminus H) -
EO(\vec{D}\setminus H)) \ne 0$.  Hence $G$ is $(\Delta-1)$-AT.
\end{proof}

\begin{lemma}
Let $G$ be a graph with maximum degree $\Delta$ and let $H$ be an induced
subgraph of $G^2$.  If $H$ is $d_1$-AT, then $G^2$ is $d_1$-AT. 
If there exists $v$ with $d_{G^2}(v)<\Delta^2-1$, then $G^2$ is
$(\Delta^2-1)$-AT.
\label{subgraphlemmaG^2AT}
\end{lemma}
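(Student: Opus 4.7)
The plan is to mirror the paintability proof of Lemma \ref{subgraphlemmaG^2}, replacing Painter's greedy ordering with an acyclic orientation. An acyclic digraph has a unique circulation, namely the empty one, which is even, so $\diff = 1 \ne 0$. Furthermore, if all cross edges between $V(H)$ and $V(G^2)\setminus V(H)$ in $G^2$ are oriented in the same direction (say, out of $V(H)$), then no circulation of the resulting digraph can traverse a cross edge, so every circulation decomposes into a circulation inside $H$ and one inside $G^2\setminus H$. Choosing an acyclic orientation on $G^2\setminus H$ forces the second factor to be empty, so $\diff(G^2) = \diff(H) \ne 0$ by hypothesis, exactly as in the argument of Lemma \ref{subgraphlemmaAT}.

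For the first statement, I would first reduce as in the paint proof to the case that $G[V(H)]$ is connected (adding cut vertices to $V(H)$ if necessary) and $|V(H)| \ge 2$; the case $|V(H)| = 1$ is vacuous. Contract $G[V(H)]$ to a root $r$, take a spanning tree $T$ of the quotient, and order the vertices of $V(G)\setminus V(H)$ by nonincreasing distance in $T$ from $r$; orient each edge of $G^2\setminus H$ from its later endpoint toward its earlier endpoint. To confirm in-degree at least $2$ in $G^2$ for each $w\in V(G)\setminus V(H)$, I would argue as follows: if $w$ has $T$-distance at least $2$ from $r$, its parent and grandparent in $T$ lie in $V(G)\setminus V(H)$ and are $G^2$-adjacent to $w$, yielding two in-edges from $G^2\setminus H$; if $w$ has $T$-distance exactly $1$ from $r$, connectivity of $G[V(H)]$ together with $|V(H)| \ge 2$ forces $w$ to have at least two $G^2$-neighbors in $V(H)$, which contribute two in-edges via the cross orientation. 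Each $w\in V(H)$ keeps in-degree at least $2$ from the given $d_1$-AT orientation of $H$. Combined with the decomposition above, this yields an AT orientation of $G^2$ with out-degree at most $d_{G^2}(w)-2$ for every $w$, so $G^2$ is $d_1$-AT.

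For the second statement, I would use the same acyclic-orientation trick on all of $G^2$ directly. As in the original proof, the existence of $v$ with $d_{G^2}(v) < \Delta^2-1$ forces either a short cycle through $v$ or a low-degree vertex adjacent to $v$, and in each case yields a neighbor $u$ of $v$ with $d_{G^2}(u) \le \Delta^2 - 1$. I would take a spanning tree $T$ rooted at $v$, order vertices by nonincreasing $T$-distance from $v$ with $u$ second-to-last and $v$ last, and orient every $G^2$-edge from its later endpoint to its earlier. Each $w\notin\{u,v\}$ inherits in-degree at least $2$ from its tree parent and grandparent, so its out-degree is at most $\Delta^2-2$; the vertex $v$ has all neighbors earlier, so its out-degree is $d_{G^2}(v) \le \Delta^2-2$; and $u$'s out-degree is $d_{G^2}(u) - 1 \le \Delta^2 - 2$ since $v$ comes later than $u$. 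Acyclicity gives $\diff = 1 \ne 0$, so $G^2$ is $(\Delta^2-1)$-AT. The main obstacle is the $T$-distance-$1$ case in the first statement: ensuring two cross $G^2$-neighbors in $V(H)$ for every such $w$ requires the connectivity assumption on $G[V(H)]$ and the initial reduction, mirroring the ``two uncolored neighbors'' trick that drives the corresponding step in the paint proof.
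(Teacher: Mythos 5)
Your proposal is correct and takes essentially the same route as the paper: the same contraction of $V(H)$ to a root $r$, spanning-tree ordering by nonincreasing distance, cross edges oriented out of $H$ with an acyclic orientation outside so that all circulations lie in $H$ (hence the difference is inherited from the $d_1$-AT orientation of $H$), and the same ``put $u$ and $v$ last'' trick for the second statement; you are in fact more explicit than the paper about the connectivity reduction needed to guarantee indegree at least $2$ at vertices whose tree-parent is $r$. One small patch: when $w$ is at $T$-distance exactly $2$ from $r$ its grandparent is $r$ itself, not a vertex of $V(G)\setminus V(H)$, but the second in-edge is then a cross edge of $G^2$ (the parent of $w$ has a $G$-neighbor in $V(H)$, so $w$ is within $G$-distance $2$ of $V(H)$), so the indegree count still goes through.
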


\begin{proof}
We prove the first statement first.  Form $G'$ from $G$ by contracting $V(H)$ to a single vertex $r$.  Let $T$ be
a spanning tree in $G'$ rooted at $r$.  Let $\sigma$ be an ordering of the
vertices of $G\setminus H$ by nonincreasing distance in $T$ from $r$.  Take an
orientation of $H$ demonstrating that it is $d_1$-AT; direct all edges between
$H$ and $G\setminus H$ towards $G\setminus H$ and direct all other edges
of $G^2$ toward the vertex that comes earlier in $\sigma$.  Call the resulting
oriented graph $\vec{D}$.  By construction, all circulations in $\vec{D}$ are
contained in $H$ and hence $EE(\vec{D}) \ne EO(\vec{D})$.  It is clear that
every vertex in $\vec{D}$ has indegree at least two and hence $G^2$ is
$d_1$-AT.

Now we prove the second statement, which has a similar proof.  
Suppose there exists $v$ with $d_{G^2}(v)< \Delta^2-1$.  As before we order the
vertices by nonincreasing distance in some spanning tree $T$ from $v$, and we
put $v$ and some neighbor $u$ last in $\sigma$.  Since $d_{G^2}(v)<\Delta^2-1$, either (i) $v$ lies on a 3-cycle
or 4-cycle or else (ii) $d_G(v)<\Delta$ or $v$ has some neighbor $u$
with $d_G(u)<\Delta$; in Case (ii), by symmetry we assume $d_G(v)<\Delta$.
In Case (i), $d_{G^2}(u)\le\Delta^2-1$ for some neighbor $u$ of $v$ on the short
cycle and by assumption $d_{G^2}(v)<\Delta^2-1$; so the two final vertices of
$\sigma$ are $u$ and $v$.  In Case (ii), we again have
$d_{G^2}(v)<\Delta^2-1$ and $d_{G^2}(u)\le \Delta^2-1$, so again $u$ and $v$
are last in $\sigma$.  
\end{proof}

The proof of Lemma~\ref{subgraphlemmaG^2AT} proves something slightly more
general, which we record in the following corollary.

\begin{cor}
Let $G$ be a graph with maximum degree $\Delta$ and let $H$ be an induced
subgraph of $G^2$.  Let $f(v)=d(v)-1$ for each high vertex of $G^2$ and
$f(v)=d(v)$ for each low vertex.  If $H$ is $f$-AT, 
then $G^2$ is $(\Delta^2-1)$-AT. 
\end{cor}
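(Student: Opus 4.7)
The plan is to mimic the proof of Lemma~\ref{subgraphlemmaG^2AT}, with only a mild refinement to accommodate the two possible list sizes $f(v) \in \{d_H(v)-1, d_H(v)\}$.

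First, I would start with an orientation $\vec{D}_H$ of $H$ witnessing that $H$ is $f$-AT, so $d^+_{\vec{D}_H}(v) \le f(v)-1$ for every $v \in V(H)$ and $EE(\vec{D}_H) \ne EO(\vec{D}_H)$.  Extend $\vec{D}_H$ to an orientation $\vec{D}$ of all of $G^2$ exactly as in Lemma~\ref{subgraphlemmaG^2AT}: (i) orient every edge of $G^2$ with one endpoint in $V(H)$ and one outside toward $V(G^2) \setminus V(H)$, and (ii) contract $V(H)$ to a single vertex $r$, choose a spanning tree $T$ of the contracted graph rooted at $r$, form $\sigma$ by listing $V(G^2)\setminus V(H)$ in nonincreasing $T$-distance from $r$ (placing any low vertex of $G^2$ that lies outside $H$ last in $\sigma$, next to a neighbor, exactly as in the second statement of Lemma~\ref{subgraphlemmaG^2AT}), and orient the remaining edges toward the earlier endpoint in $\sigma$.

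Next, I would verify the outdegree bound $d^+_{\vec{D}}(v) \le \Delta^2 - 2$ everywhere.  For $v \in V(H)$, the extra edges incident to $v$ (those leading out of $H$) all point away from $v$, so
\[
d^+_{\vec{D}}(v) \le (f(v)-1) + (d_{G^2}(v) - d_H(v)).
\]
If $v$ is high this equals $(d_H(v)-2) + (\Delta^2 - d_H(v)) = \Delta^2-2$; if $v$ is low this equals $(d_H(v)-1) + ((\Delta^2-1)-d_H(v)) = \Delta^2-2$.  For $v \in V(G^2) \setminus V(H)$, the argument from Lemma~\ref{subgraphlemmaG^2AT} still applies: a high $v$ picks up at least two later neighbors in $\sigma$ (its $T$-parent and $T$-grandparent, or two $H$-neighbors when those tree ancestors reduce to $r$), while a low $v$ placed at the end of $\sigma$ picks up at least one later neighbor.

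Finally, since every edge between $V(H)$ and its complement is directed out of $H$, and every edge inside $V(G^2)\setminus V(H)$ points toward an earlier vertex of $\sigma$, no directed cycle of $\vec{D}$ can leave $V(H)$.  Hence every circulation of $\vec{D}$ is a circulation of $\vec{D}_H$, giving $EE(\vec{D}) - EO(\vec{D}) = EE(\vec{D}_H) - EO(\vec{D}_H) \ne 0$, and so $G^2$ is $(\Delta^2-1)$-AT.  The only real bookkeeping issue is locating the possibly several low vertices outside $H$ at the tail of $\sigma$ so that each still gets in-degree at least one, but this is handled by the same ordering trick used in Lemma~\ref{subgraphlemmaG^2AT}, so there is no substantive obstacle beyond that already overcome there.
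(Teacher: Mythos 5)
Your construction is the paper's own: the Corollary appears in the paper precisely because the proof of Lemma~\ref{subgraphlemmaG^2AT} already gives it, and your outdegree accounting for vertices of $H$ (high: $(f(v)-1)+(d_{G^2}(v)-d_H(v))=\Delta^2-2$; low: at most $(d_H(v)-1)+(d_{G^2}(v)-d_H(v))\le\Delta^2-2$) is exactly the ``slightly more general'' observation being recorded. The circulation argument is also right: all arcs between $H$ and its complement leave $H$, and arcs outside $H$ go strictly backward in $\sigma$, so every circulation lies inside the orientation of $H$ and the difference $EE-EO$ is unchanged.

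The one flaw is the parenthetical modification of $\sigma$ (moving low vertices of $G^2$ outside $H$ to the tail, ``as in the second statement''), which is both unnecessary and, as you justify it, incorrect. It is unnecessary because with the plain nonincreasing-distance order every vertex outside $H$ already has indegree at least $2$ (its $T$-parent and $T$-grandparent come later in $\sigma$, or are replaced by $G^2$-neighbors in $H$ when the tree ancestors collapse to $r$), while a low vertex outside $H$ only needs indegree at least $d_{G^2}(v)-(\Delta^2-2)\le 1$; so no special placement is required. It is incorrect as justified because your claim that ``a low $v$ placed at the end of $\sigma$ picks up at least one later neighbor'' fails for the very last vertex, which has no later neighbors at all: if such a $v$ has $d_{G^2}(v)=\Delta^2-1$ and no $G^2$-neighbor in $H$, it ends with indegree $0$ and outdegree $\Delta^2-1>\Delta^2-2$. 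The ordering trick in the second statement of Lemma~\ref{subgraphlemmaG^2AT} is not transferable here, since there the last vertex satisfies $d_{G^2}(v)<\Delta^2-1$, i.e.\ $d_{G^2}(v)\le\Delta^2-2$, so indegree $0$ is affordable, whereas for your low vertices it need not be. Simply delete the reordering and your argument coincides with the paper's proof.
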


Now each of Lemmas~\ref{farlinked}, \ref{3unlinked}, \ref{B1B2},
\ref{path-lemma}, \ref{cycle+pendant}, and~\ref{cycle+2pendant} was already
proved for $AT$.  It
remains to prove the lemmas in Section \ref{DirectProofs} for $AT$.  We do this
by exhibiting in Figures \ref{indirect-fig1} and \ref{indirect-fig2} a good
Alon-Tarsi orientation for each.  For brevity, we will not
prove here that the counts differ; instead we give the actual even/odd
circulation counts for the reader to check at her leisure.  Each vertex will be
labeled with its indegree for easy checking.  Note that three of the cases in
Lemma~\ref{K4v2E2} are handled by Lemmas~\ref{K2vC4}, \ref{K3vP4},
and~\ref{K3vK1+P3} (none of which depend on Lemma~\ref{K4v2E2}).

We conclude by generalizing the conjectures we mentioned in the introduction to
the Alon-Tarsi number.

\begin{conj}[Borodin-Kostochka Conjecture (Alon-Tarsi version)]
If $G$ is a graph with $\Delta\ge 9$ and $\omega\le \Delta-1$, then $\AT(G)\le
\Delta-1$.
\end{conj}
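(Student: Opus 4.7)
My plan would be to follow the minimal-counterexample framework the paper uses for its Main Theorem for $\AT$, adapted to the Borodin-Kostochka setting. Let $G$ be a vertex-minimal counterexample, so $\Delta(G)\ge 9$, $\omega(G)\le \Delta-1$, $\AT(G)\ge \Delta$, and $\AT(H)\le \Delta-1$ for every proper induced subgraph $H$. Lemma~\ref{subgraphlemmaAT} then yields the driving reducibility principle: $G$ can contain no induced $d_1$-AT subgraph whatsoever. This gives a bank of forbidden local configurations, mirroring the way the Main Theorem exploits $d_1$-paintability, and it is precisely the constraint I would try to turn into structure.

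First I would argue that $G$ must be $\Delta$-regular and that every vertex lies in some short cycle, using the greedy Alon-Tarsi orientation obtained by ordering the vertices by distance from a low vertex in a spanning tree (the trick in Lemma~\ref{subgraphlemmaG^2AT}). Next I would try to rule out a clean induced $K_{\Delta-1}$ attached to a few outside vertices, by producing explicit Alon-Tarsi orientations of the relevant small joins in the spirit of the direct computations of Section~\ref{DirectProofs}; graphs like $K_{\Delta-1}\vee \overline{K_2}$ and $K_{\Delta-2}\vee H$ for small $H$ containing a nonadjacent pair look like natural candidates to show are $d_1$-AT by a bijection argument on circulations, analogous to the paper's treatment of $K_4\vee \overline{K_2}$. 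After stripping out these clique-heavy configurations, the plan would be to invoke (or prove an AT analogue of) a Mozhan-type partitioning or a result on the intersection pattern of maximum cliques to locate one final reducible subgraph that must occur.

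The hard part will be the last step: the classical Borodin-Kostochka conjecture remains open, and the $\AT$ strengthening proposed here is strictly stronger than the paintability and list-chromatic versions. The probabilistic kernel-choice and discharging tools that underpin Reed's proof for $\Delta$ sufficiently large do not translate to Alon-Tarsi, because an AT reduction demands an explicit orientation with unbalanced signed circulation counts rather than a coloring algorithm or a local-choice strategy. So the central obstacle I expect to get stuck on is producing a structural theorem, or equivalently a sufficiently rich family of $d_1$-AT configurations, strong enough to force a contradiction purely through the enumeration of eulerian subdigraphs; without new ideas, this last step seems out of reach.
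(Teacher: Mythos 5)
The statement you are attempting is not a theorem of the paper at all: it appears there only as a conjecture (the Alon--Tarsi strengthening of the Borodin--Kostochka conjecture), stated without proof, and it is strictly stronger than Conjectures~\ref{BKconj}, \ref{BKlistconj} and~\ref{BKpaint}, every one of which is open. So there is no proof in the paper to compare your route against, and your proposal must be judged purely as a proof attempt. As such it is incomplete, and you concede this yourself in your last paragraph: the entire content of the conjecture is concentrated in the step you defer to ``a Mozhan-type partitioning or a result on the intersection pattern of maximum cliques,'' for which you supply no argument.

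Concretely, the gap is that the reducibility machinery you borrow (Lemma~\ref{subgraphlemmaAT} forbidding induced $d_1$-AT subgraphs in a vertex-minimal counterexample) is only decisive in the paper because squares of graphs carry a great deal of extra structure: a vertex of $G^2$-degree below $\Delta^2-1$ immediately gives a good greedy orientation (Lemma~\ref{subgraphlemmaG^2AT}), girth considerations in the underlying graph $G$ produce the specific small configurations of Section~\ref{lemmas}, and the clique bound $\omega(G^2)\le\Delta^2-1$ rests on the Moore-graph classification of Hoffman--Singleton and Erd\H{o}s--Fajtlowicz--Hoffman. None of this is available for an arbitrary graph with $\Delta\ge 9$ and $\omega\le\Delta-1$; in particular your first step, that every vertex of a minimal counterexample ``lies in some short cycle,'' is an artifact of the square setting (where $d_{G^2}(v)<\Delta^2$ is equivalent to such local structure) and has no analogue here, so the spanning-tree orientation trick does not deliver it. Moreover, the partial results known for Borodin--Kostochka (Reed's theorem for very large $\Delta$) use probabilistic and kernel arguments that do not produce orientations with $|EE|\ne|EO|$, so there is no existing structural theorem to invoke at your final step, even for the weaker paintability or list versions. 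What you have written is a reasonable framework, but it proves nothing beyond what Lemma~\ref{subgraphlemmaAT} already says, and the statement remains open.
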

\input{IndirectDirectProofs} 
\clearpage

\bibliographystyle{siam}
\bibliography{GraphColoring}
\end{document}